\title{Line Complexity Asymptotics of Polynomial Cellular Automata}
\author{Bertrand Stone} 
\newcommand{\Z}{\mathbb Z}
\newcommand{\algA}{\mathscr A}
\DeclareMathOperator*{\Sum}{\mathlarger{\sum}}
\newcommand{\frL}{\left\langle\log_p\frac{k}{j}\right\rangle}
\newcommand{\flL}{\left\lfloor\log_p\frac{k}{j}\right\rfloor}
\newcommand{\rL}{\log_p\frac{k}{j}}
\newtheorem{theorem}{Theorem}
\newtheorem{cor}{Corollary}
\newtheorem{prop}{Proposition}
\newtheorem{remark}{Remark}
\begin{document}

%Title page
\maketitle

\Large
\begin{center}\textbf{Abstract}\end{center}
\normalsize

Cellular automata are discrete dynamical systems which consist of changing patterns of symbols on a grid. An automaton changes from its present state to the next according to a transition rule which determines the automaton's local behavior. Despite the simplicity of their definition, cellular automata have been applied in the simulation of complex phenomena as disparate as biological systems and universal computers. In this paper, we will consider cellular automata that arise from polynomial transition rules, where the symbols in the automaton are integers modulo some prime $p$. We are principally concerned with the asymptotic behavior of the \emph{line complexity sequence} $a_T(k)$, which counts, for each $k$, the number of coefficient strings of length $k$ that occur in the automaton. We begin with the modulo $2$ case. For a given polynomial $T(x) = c_0 + c_1x + \ldots + c_nx^n$ with $c_0,c_n\neq 0$, we construct \emph{odd} and \emph{even} parts of the polynomial from the strings $0c_1c_3c_5\cdots$ and $c_0c_2c_4\cdots$, respectively. We prove that for polynomials whose odd and even parts are relatively prime, $a_T(k)$ satisfies recursions of a specific form. We also consider powers of transition rules in the modulo $p$ case, introducing a notion of the \emph{order} of a recursion, distinct from the order of the transition rule. We show that the property of ``having a recursion of some order'' is preserved when the transition rule is raised to a positive integer power. We then derive functional relations for the generating functions associated to the line complexity sequence, using the recursions described above. Extending to a more general setting, we investigate the asymptotics of $a_T(k)$ by considering an abstract generating function $\phi(z)=\sum_{k=1}^\infty\alpha(k)z^k$ which satisfies a general functional equation relating $\phi(z)$ and $\phi(z^p)$ for some prime $p$. We show that there is a continuous, piecewise quadratic function $f$ on $[1/p, 1]$ for which $\lim_{k\to\infty}\left\lbrack\frac{\alpha(k)}{k^2} - f(p^{-\langle\log_p k\rangle})\right\rbrack = 0$, where $\langle y\rangle$ denotes the fractional part of $y$. We use this result to show that for positive integer sequences $s_k\to\infty$ with a parameter $x\in [1/p,1]$ and for which $\lim_{k\to\infty}\langle\log_p s_k(x)\rangle=\log_p\frac{1}{x}$, the ratio $\alpha(s_k(x))/s_k(x)^2$ tends to $f(x)$, and that the limit superior and inferior of $\alpha(k)/k^2$ are given by the extremal values of $f$.

\newpage
\section{Introduction}

A \textit{cellular automaton} is a discrete system which consists of patterns of symbols on a grid. These patterns change in successive time intervals, and the changes are specified by a \emph{transition rule}, in such a way that the symbol in a particular location at a particular point in time is determined by the surrounding symbols in the previous state. Although cellular automata are determined by simple local rules, they can nevertheless exhibit large-scale complex behavior. Von Neumann, who initiated the study of cellular automata, investigated their connections to the modelling of biological systems \cite{vnm}. As Willson notes in \cite{Willson_fract}, cellular automata can be used to model chaotic phenomena because their discrete structure facilitates exact computation.

In this paper, we shall focus on one-dimensional cellular automata. A particular state for such an automaton is called a \emph{configuration}, and may be expressed as a Laurent series \[\Sum_{-\infty}^\infty{a_ix^i},\] where the superscripts correspond to the locations of the values $a_i$. For example, the expression $x + 3x^3 + 2x^4$ represents the string $01032$.

Given a configuration $\omega$, the \emph{transition rule} $T$ for a cellular automaton determines a new configuration $T\omega$ in such a way that the value at a given index $i$ in $T\omega$ is determined by values near $i$ in $\omega$. An \emph{additive} transition rule is specified by a Laurent polynomial and acts upon a configuration by multiplication. In this paper, we will use as an alphabet the integers modulo some prime $p$. In this case the transition rule acts upon a configuration by multiplication, and the coefficients are reduced modulo $p$. We illustrate this process by constructing Pascal's triangle modulo $2$ in Figure \ref{Pascal}; we take $p=2$, $T(x) = 1+x$, and start with the initial state $\omega_0 = 1$.

\begin{figure}[h]
\centering
\includegraphics{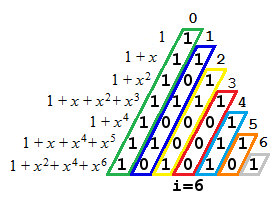}
\caption{Constructing Pascal's triangle modulo 2}
\label{Pascal}
\end{figure}

A more complicated example is obtained by taking $p=2$, $\omega_0 = 1$, $T(x) = 1+x^2+x^4+x^5$. This automaton is illustrated in Figure \ref{more}.

\begin{figure}
\centering
\includegraphics[scale = 0.45]{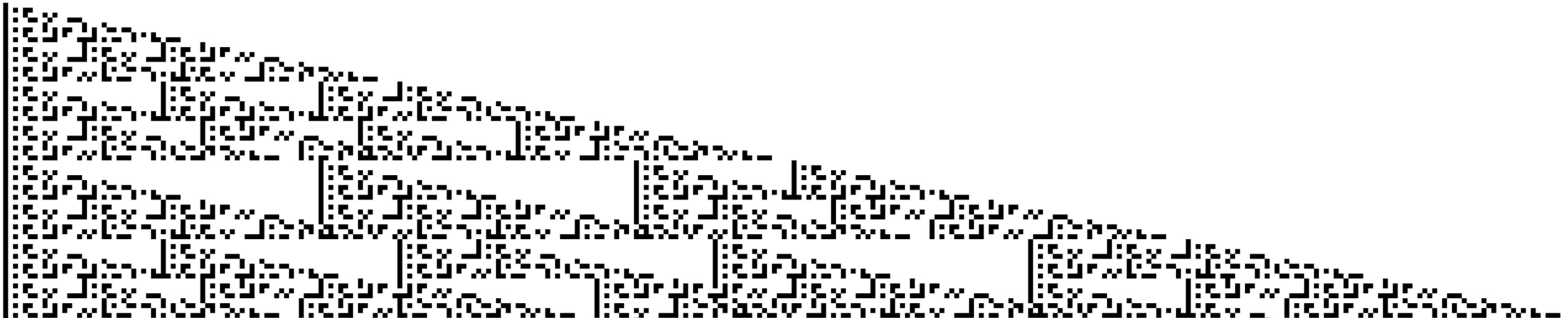}
\caption{The automaton obtained by iteratively multiplying $\omega_0 = 1$ by the rule $T(x) = 1+x^2+x^4+x^5$, modulo $2$.}
\label{more}
\end{figure}

Sequences of length $k$ which appear in some configuration are called $k$-\emph{accessible blocks}. For example, the block $110011$ appears in line $5$ of the automaton shown in Figure \ref{Pascal}, and is thus accessible. We will write $a_T(k)$ for the number of accessible blocks of length $k$ for a given transition rule $T$ (it is implicitly assumed that the initial state has been specified). We define $a_T(0) = 1$: the empty string is always accessible. The sequence $a_T(k)$ for $k \geq 0$ is called the \emph{line complexity} of the automaton. Unless otherwise specified, we will assume that the automaton has initial state $1$.

For transition rules of positive degree which do not reduce to monomials, and for which the coefficients are taken modulo general primes $p$, Berth\'e showed in \cite{berthe} that the line complexity sequence satisfies \begin{equation}\label{berthe-estimate}C_1 \leq\frac{a_T(k)}{k^2}\leq C_2,\end{equation} for some fixed positive constants $C_1, C_2$.

Garbe \cite{Garbe} considered the transition rule $T(x) = 1+x$ with coefficients taken modulo $p$ and the rule $T(x) = 1+x+x^2$ with coefficients taken modulo small primes $p$, and investigated the asymptotic behavior of subsequences of the quotient $a_T(k)/k^2$. In particular, for the sequence $n(k) = \lfloor p^k/x\rfloor$, Garbe showed that the quotient $a_T(n(k))/n(k)^2$ converges to a function that is piecewise quadratic in $x$. 

We will consider more general polynomials, and investigate the asymptotic behavior of the associated automata using recursion formulas for the line complexity sequence. We begin with the modulo 2 case. For a given polynomial $T(x) = c_0 + c_1x + \ldots + c_nx^n$ with $c_0,c_n\neq 0$, we construct \emph{odd} and \emph{even} parts of the polynomial from the strings $0c_1c_3c_5\cdots$ and $c_0c_2c_4\cdots$, respectively. We will prove that polynomials for which the odd and even parts are relatively prime have recursion formulas which we can use to investigate the asymptotics of the line complexity sequence. We will generalize Garbe's results on asymptotics to the present context: in particular, we will show that there is a piecewise quadratic function $f$ on $[1/p, 1]$ for which $\lim_{k\to\infty}\left\lbrack\frac{a_T(k)}{k^2} - f(p^{-\langle\log_p k\rangle})\right\rbrack = 0$, where $\langle y\rangle$ denotes the fractional part of $y$. We then investigate positive integer sequences $s_k$ with a parameter $x\in [1/p,1]$, such that $s_k\to\infty$ and $\lim_{k\to\infty}\langle\log_p s_k(x)\rangle=\log_p\frac{1}{x}$, and show that the ratio $a_T(s_k(x))/s_k(x)^2$ tends to $f(x)$. We also show that the limit superior and inferior of $a_T(k)/k^2$ are given by the extremal values of $f$, thus proving a more precise version of the bound (\ref{berthe-estimate}) for large $k$.

In Section 2, we introduce some useful notation. In Section 3, we will describe the general structure of the recursion relations, and we will see the importance of the injectivity of several transformations that we will introduce. In Section 4, we investigate the injectivity of these maps, and provide a complete characterization of which polynomials induce injective maps on the whole space. In Section 5, we show that the asymptotic sizes of the intersections that arise in Section 3 are constant. In Section 6, we examine some interesting consequences of introducing a notion of the \emph{order} of a recursion, and characterize the behavior of the line complexity sequence when the transition rule is raised to a power. In Section 7 we examine generating functions for the line complexity sequence in the modulo $2$ case. In Section 8, we investigate the asymptotics of the quotient $a_T(k)/k^2$ in a general context. We conclude and describe some directions of future research in Section 9.

\section{Notation}

In this section, we introduce some notation which we will use throughout the rest of the paper.

In the following, we will write $A_p(I; T)$ for the automaton generated by iteratively multiplying $I$ by $T$ and reducing the coefficients modulo $p$. We will assume throughout that $I, T\in (\Z/p)[x]$. We will write $\algA(k)$ for the set of accessible blocks of length $k$ associated to such an automaton.

We shall write $1^201 = 1101$ etc. in block notation; to distinguish this notation from operations such as squaring, we shall write the latter with square brackets, e.g. 
\[\lbrack(111)^2\rbrack=(1+x+x^2)^2 = 10101,\] whereas \[(111)^2 = 111111.\]

If $b = b_0\cdots b_n$, we will write $b|_i^j = b_i\cdots b_j$. At the end of a block, we employ the notation $0_l$ to represent sufficiently many zeros to bring the total length of the block to $l+1$; for example $1010_5 = 101000$.

If $f$ and $g$ are polynomials, we will write $(f,g)=1$ to indicate that $f$ and $g$ have no nontrivial common factors.

\section{Recursion Formulas for the Line Complexity Sequence}\label{recursion_formulas}
Our study of the asymptotic properties of the line complexity sequence is based upon recursion formulas for $a_T(2k)$ and $a_T(2k+1)$. These recursions hold for sufficiently large $k$, and their structure is motivated by the following analysis. We shall focus primarily on the recursion for $a_T(2k)$.

Consider an automaton $A_2(1; T)$, where $T$ is a polynomial of degree $n$, and some even row of this automaton, say $2r$. We see that this line of the automaton is of the form \begin{equation}T^{2r}(1) = T^{2r} = (T^r)^2.\label{goose}\end{equation}

In view of the identity $T(s^2)\equiv T(s)^2\pmod 2$, squaring a polynomial has the effect of inserting zeros between the original coefficients; thus (in block notation) we have
\[\lbrack(x_0x_1x_2)^2\rbrack = x_00x_10x_2.\] Since line $2r$ of the automaton is a square, we see that all accessible blocks of length $2k$ appearing in this row must be of the form
\[x_00x_10\cdots x_{k-1}0\] or of the form \[0x_00x_1\cdots 0x_{k-1}.\] Moreover, by the identity (\ref{goose}), it follows that $x_0x_1\cdots x_{k-1}$ must be accessible.

We introduce the sets
\[A_1 = \{x_00x_10\cdots x_{k-1}0:x_0x_1\cdots x_{k-1}\in\algA(k)\}\] and
\[A_2 = \{0x_00x_1\cdots 0x_{k-1}:x_0x_1\cdots x_{k-1}\in\algA(k)\},\] and the maps $T_{A_i}:\algA(k)\to A_i$ defined by
\[T_{A_1} : x_0x_1\cdots x_{k-1} \mapsto x_00x_10\cdots x_{k-1}0\] and \[T_{A_2} : x_0x_1\cdots x_{k-1} \mapsto 0x_00x_1\cdots 0x_{k-1}.\] It is clear that the maps $T_{A_i}$ are bijective, so that $|A_1| = |A_2| = a_T(k)$.

The accessible blocks in odd-numbered rows have a more complex structure. We first assume that $n$ is even, and consider a row $2r+1$ of the automaton. We want to establish a correspondence between accessible blocks of length $2k$ in this row and accessible blocks of some smaller length in row $r$. We will use the locally-determined nature of the automaton and the fact that all blocks in row $2r+1$ arise by applying the transition rule to row $2r$.

To produce the accessible blocks of length $2k$ in row $2r+1$, we start with a given accessible block $b$ of length $k+\frac{n}{2}$ in line $r$. It follows that the block $0\lbrack b^2\rbrack 0$ is a block of length $2k+n+1$ which appears in row $2r$. (Moreover, as we saw in the case of the sets $A_i$, all such blocks are produced in this way.) We now apply the transition rule to this block, obtaining a block of length $2k+2n+1$ in row $2r+1$ (the right side of the block must be padded with zeros to ensure this). We now eliminate the $n$ entries on either side of the resulting block, obtaining a block of length $2k+1$. This is necessary because in the context of the entire automaton, the block $b$ does not determine these $n$ entries on either side. This leaves a block $t$ of length $2k+1$. Finally, define
\[T_{B_1}b = t_0\cdots t_{2k-1}\] and \[T_{B_2}b = t_1\cdots t_{2k}.\]
Briefly, we can write
\[T_{B_i}b = T\left(0\lbrack b^2\rbrack 0\right)0_{2k+2n}|_{n+i-1}^{n+2k+i-2}.\]

For example, consider the automaton $A_2(1, 1+x+x^2)$. We outline the above process in the following schematic:

\begin{alignat*}{2}
& b \quad && 1011\\
& 0\lbrack b^2\rbrack 0 \quad && 010001010\\
& T(0\lbrack b^2\rbrack 0)0_{2k+2n} \quad && 01\vert \lefteqn{\underbrace{\phantom{110110}}_{T_{B_1}b}} 1\!\overbrace{101101}^{T_{B_2}b}\vert 10
\end{alignat*}

In the above example, we note that if there had been a $1$ immediately to the left of the block $0\lbrack b^2\rbrack 0$, the two leftmost entries of $T(0\lbrack b^2\rbrack 0)0_{2k+2n}$ would be changed to $10$. We thus see that these two entries cannot be determined by $b$ alone; this is why $n$ entries must be deleted on either side of $T(0\lbrack b^2\rbrack 0)0_{2k+2n}$.

We now define \[B_1 = T_{B_1}\left(\algA(k+\tfrac{n}{2})\right)\] and \[B_2 = T_{B_2}\left(\algA(k+\tfrac{n}{2})\right).\] Thus the maps $T_{B_i}:\algA(k+\frac{n}{2}) \to B_i$ are clearly surjective.

The case of odd $n$ is similar, but with some modification. Namely, in this case, the map $T_{B_1}$ acts upon blocks in $\algA(k+\frac{n-1}{2})$, and the map $T_{B_2}$ acts upon blocks in $\algA(k+\frac{n+1}{2})$. The sets $B_i$ are defined in the same way.

The transformations $T_{A_i}'$, $T_{B_i}'$ for blocks of \emph{odd} length are defined in an analogous manner. We first define $T_{A_1}'$ on $\algA(k+1)$ by

\[T_{A_1}':x_0x_1\cdots x_k \mapsto x_00x_10\cdots x_{k-1}0x_k,\] and $T_{A_2}'$ on $\algA(k)$ by \[T_{A_2}':x_0x_1\cdots x_{k-1}\mapsto 0x_00x_1\cdots 0x_{k-1}0.\] 

If $n$ is even, we define $T_{B_1}'$ on $\algA(k+\frac{n}{2})$ by \[T_{B_1}':b\mapsto T\left(0\lbrack b^2\rbrack 0\right)0_{2k+2n}|_{n}^{n+2k},\] and $T_{B_2}'$ on $\algA(k+\frac{n}{2}+1)$ by \[T_{B_2}':b\mapsto T\left(0\lbrack b^2\rbrack 0\right)0_{2k+2n+2}|_{n+1}^{n+2k+1}.\] If $n$ is odd, we define both $T_{B_1}'$ and $T_{B_2}'$ on $\algA(k+\frac{n+1}{2})$ in analogy with the definition of $T_{B_1}$ and $T_{B_2}$ for even $n$.

If we can show that the maps $T_{B_i}$ are \emph{injective} as well as surjective, by the inclusion-exclusion principle we arrive at the following general recursion:

\begin{align*}a_T(2k) = \hspace{0.1cm}&2a_T(k) + a_T(k+\left\lfloor\tfrac{n}{2}\right\rfloor) + a_T(k+\left\lfloor\tfrac{n+1}{2}\right\rfloor)\\
&-|A_1\cap A_2|-|A_1\cap B_1|-|A_1\cap B_2|-|A_2\cap B_1|-|A_2\cap B_2|-|B_1\cap B_2|\\
&+|A_1\cap B_1\cap B_2|+|A_2\cap B_1\cap B_2|+|A_1\cap A_2\cap B_1|+|A_1\cap A_2\cap B_2|\\
&-|A_1\cap A_2\cap B_1\cap B_2|.
\end{align*}

We will investigate the injectivity of the maps $T_{B_i}$ in the next section, and we will examine the intersections in the subsequent section.

\section{Injectivity}\label{injectivity}

We will now characterize the polynomials for which the maps $T_{B_i}$ are injective on the whole space; for example, if $n$ is even, we will characterize the polynomials for which the maps \[T_{B_i} : (\Z/2)^{k+\frac{n}{2}}\to(\Z/2)^{2k}\] are injective; it follows that the maps \[T_{B_i} : \algA(k+\tfrac{n}{2})\to B_i\] are then bijective.

We will express the maps $T_{B_i}$ in matrix form. The rule $T$ can be written explicitly as \[T(x) = c_0 + c_1x + \ldots + c_nx^n.\] If we wish to multiply this polynomial by another polynomial $S(x) = d_0 + d_1x + \ldots + d_mx^m$, we construct a matrix with $m+1$ columns, of the form

\[
M = \begin{bmatrix}
c_0\\
c_1 & c_0\\
\vdots & c_1\\
c_n & \vdots & & c_0\\
 & c_n & \vdots & c_1\\
 &  &  & \vdots\\
 &  &  & c_n
\end{bmatrix}.
\] The polynomial $(TS)(x)$ is then given by multiplying $M$ by the coefficient vector $(d_0, d_1,\ldots, d_m)$, and expressing the result in the basis $(1, x, x^2, \ldots, x^{n+m})$. Suppose $n$ is even. The action of the map $T_{B_1}$ on a $(k+\frac{n}{2})$-block $b$ can be described by the multiplication of a matrix $\lbrack T_{B_1}\rbrack$ with $b$. We obtain the matrix $\lbrack T_{B_1}\rbrack$ from $M$, first by deleting columns $0, 2, 4,$ etc., in order to express the effect of squaring $b$ and inserting zeros, and then by deleting the first $n$ and last $n+1$ rows of the resulting matrix.

If we define 
\[
C = \begin{bmatrix}
c_{n-1} & c_{n-3} & \cdots & c_1 & 0\\
c_n & c_{n-2} & \cdots & c_2 & c_0
\end{bmatrix},
\] we see that $\lbrack T_{B_1}\rbrack$ is the $2k\times (k+\frac{n}{2})$ matrix in Figure \ref{tb1}.

\begin{figure}[ht]
\centering
\includegraphics[scale = 0.8]{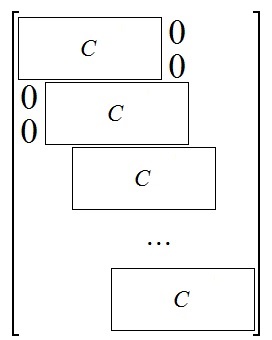}
\caption{The matrix $\lbrack T_{B_1}\rbrack$}
\label{tb1}
\end{figure}

If $n$ is odd, the form of the matrix is also given by Figure \ref{tb1}, but in this case we have 

\[
C = \begin{bmatrix}
c_{n-1} & c_{n-3} & \cdots & c_0\\
c_n & c_{n-2} & \cdots & c_1
\end{bmatrix}.
\] The matrix for $T_{B_1}$ of shape $2k\times (k+\lfloor\frac{n}{2}\rfloor)$. The matrix for $T_{B_2}$ is constructed in an analogous manner, and is of shape $2k\times (k+\lfloor\frac{n+1}{2}\rfloor)$. A chart of the matrices for $T_{B_2}$ is given in Figure \ref{Chart}.

\begin{figure}
\centering
\includegraphics[scale = 0.8]{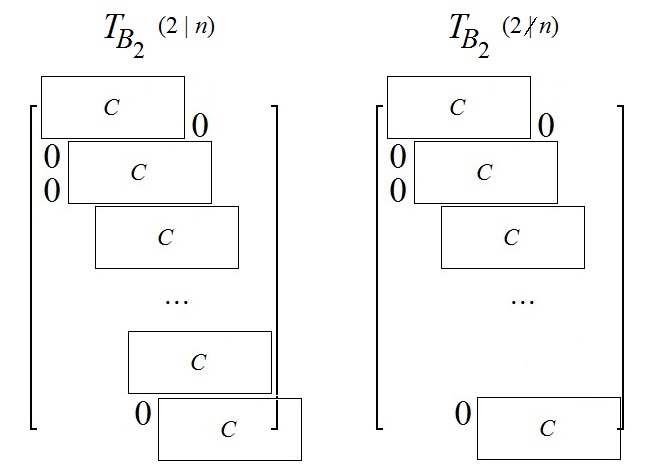}
\caption{Matrices for the transformation $T_{B_2}$.}
\label{Chart}
\end{figure}

In the following, we assume that $n\geq 1$. We say that a polynomial is \emph{suspicious} if there exists $k\geq \lfloor\frac{n}{2}\rfloor$ for which either $T_{B_1}$ or $T_{B_2}$ is not injective (note that here the domains are $(\Z/2)^{k+\frac{n}{2}}$ in the even case, instead of $\algA(k+\frac{n}{2})$).

The reason for the assumption that $k\geq \lfloor\frac{n}{2}\rfloor$ is the following: if $k< \lfloor\frac{n}{2}\rfloor$, we have \[\text{rank} \lbrack T_{B_1}\rbrack\leq 2k< k+\left\lfloor\frac{n}{2}\right\rfloor,\] so that $T_{B_1}$ is definitely not injective.

We now observe that the matrices for the maps $T_{B_i}'$ may be found by precisely the same method as above; in fact, they exhibit the same pattern of translated submatrices, and may be considered enlarged versions of the matrices $\lbrack T_{B_i}\rbrack$. For example, if $n$ is odd, the matrix $\lbrack T_{B_1}'\rbrack$ is obtained from $\lbrack T_{B_1}\rbrack$ by adding the next row and the next column in the pattern.

From this observation it is easy to show that
\begin{prop}
If $T$ is not suspicious, then the maps $T_{B_1}'$ and $T_{B_2}'$ are injective for $k\geq \lfloor\frac{n}{2}\rfloor +1$.
\end{prop}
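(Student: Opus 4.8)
The plan is to exploit the structural observation made just before the proposition: the matrices $\lbrack T_{B_i}'\rbrack$ for odd-length blocks are, up to the addition of one extra row and one extra column, the same ``staircase of translated copies of $C$'' pattern as the matrices $\lbrack T_{B_i}\rbrack$ for even-length blocks. So I would first fix notation: write $\lbrack T_{B_1}\rbrack_k$ for the $2k\times(k+\lfloor n/2\rfloor)$ matrix at parameter $k$, and $\lbrack T_{B_1}'\rbrack_k$ for the corresponding matrix for the odd-length transformation, and make precise the claim that $\lbrack T_{B_1}'\rbrack_k$ contains $\lbrack T_{B_1}\rbrack_{k+1}$ (or $\lbrack T_{B_1}\rbrack_k$, depending on parity bookkeeping) as a submatrix obtained by deleting one row and one column — indeed, by inspecting Figure \ref{tb1}, the enlarged matrix is built from the same block $C$ shifted down the diagonal, with one more diagonal step. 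The key point is that a matrix whose columns are linearly independent remains a matrix with linearly independent columns after deleting a row only if no dependence is created — so the implication we actually need runs the easy direction: injectivity of the \emph{larger} map forces injectivity of the \emph{smaller} one.

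Concretely, the argument I envisage has three steps. First, show that if $T$ is not suspicious, then for every $k\geq\lfloor n/2\rfloor$ the matrices $\lbrack T_{B_1}\rbrack_k$ and $\lbrack T_{B_2}\rbrack_k$ have full column rank (this is just the definition of ``not suspicious,'' unwound over $\Z/2$). Second, establish the embedding: $\lbrack T_{B_i}'\rbrack_k$, for $k\geq\lfloor n/2\rfloor+1$, has a submatrix equal to some $\lbrack T_{B_i}\rbrack_{k'}$ with $k'\geq\lfloor n/2\rfloor$ obtained by deleting exactly one column (and possibly some rows). Deleting rows never decreases the column space's ability to be independent in the wrong direction — more carefully, if $N$ is a submatrix of $N'$ obtained by deleting a \emph{column}, then $\ker N \subseteq \ker N'$ under the natural inclusion of coordinate spaces, so full column rank of $N'$ does \emph{not} immediately give full column rank of $N$; rather it is the reverse. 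Hence I must instead arrange the embedding so that $\lbrack T_{B_i}\rbrack_{k'}$ is obtained from $\lbrack T_{B_i}'\rbrack_k$ by deleting a column: then $\ker\lbrack T_{B_i}\rbrack_{k'}\hookrightarrow\ker\lbrack T_{B_i}'\rbrack_k$ is false too, so the right statement is that $\ker\lbrack T_{B_i}'\rbrack_k$, intersected with the hyperplane where the extra coordinate vanishes, injects into $\ker\lbrack T_{B_i}\rbrack_{k'}=0$. Third, rule out the one remaining possibility, a kernel vector of $\lbrack T_{B_i}'\rbrack_k$ whose extra coordinate is nonzero: here I use the additional row present in $\lbrack T_{B_i}'\rbrack$ but not in $\lbrack T_{B_i}\rbrack$, which (because $c_0,c_n\neq 0$) pins down that coordinate in terms of the others, reducing again to the non-suspicious case.

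The main obstacle, then, is step three: the bookkeeping of exactly which row and which column are added when passing from $\lbrack T_{B_i}\rbrack$ to $\lbrack T_{B_i}'\rbrack$, and verifying that the newly added row has a nonzero entry ($c_0$ or $c_n$, guaranteed by the standing hypothesis $c_0,c_n\neq 0$) in precisely the newly added column, so that a putative kernel vector with nonzero extra coordinate is forced to have that coordinate determined, and then the truncated vector lies in $\ker\lbrack T_{B_i}\rbrack_{k'}$, which vanishes by non-suspiciousness. I expect the even-$n$ and odd-$n$ cases to require separate but parallel index computations, since the shapes $k+\frac n2$, $k+\frac n2+1$, and $k+\lfloor\frac{n+1}2\rfloor$ shift differently; I would handle odd $n$ in detail and remark that even $n$ is analogous. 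Once the row/column correspondence is pinned down, the linear-algebra content is immediate and the proposition follows.
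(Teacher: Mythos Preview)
The paper does not actually give a proof of this proposition; it merely asserts that the result is ``easy to show'' from the structural observation that the matrices $\lbrack T_{B_i}'\rbrack$ exhibit the same staircase pattern of translated copies of $C$ as the matrices $\lbrack T_{B_i}\rbrack$. The intended argument is almost certainly the sliding-window reduction that appears a few lines later in the proof of Theorem~1: once the leading $n\times n$ block $\lbrack T_{B_1}\rbrack(k=\lfloor n/2\rfloor)$ is known to be invertible, then for \emph{any} matrix with this staircase structure a kernel vector must have $x_0=\cdots=x_{n-1}=0$, then shifting down two rows and right one column forces $x_n=0$, and so on. Since the primed matrices have exactly this structure, the argument applies verbatim; no separate bookkeeping for the ``extra'' row or column is needed.

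Your route is different and runs into a genuine gap at step~3. First, note that in two of the four cases ($T_{B_1}'$ for $n$ even, $T_{B_2}'$ for $n$ odd) the primed map has the \emph{same} domain as the unprimed one and merely one more row in its matrix, so $\ker\lbrack T_{B_i}'\rbrack\subseteq\ker\lbrack T_{B_i}\rbrack=0$ and there is nothing to do; your three-step plan is only relevant in the remaining two cases. There, your step~3 does not work as written: if the extra coordinate $x_{\mathrm{new}}$ is nonzero, the old rows give $\lbrack T_{B_i}\rbrack_{k'}x' = x_{\mathrm{new}}\cdot v$, where $v$ is the restriction of the new column to the old rows. So the truncated vector $x'$ is \emph{not} in $\ker\lbrack T_{B_i}\rbrack_{k'}$ as you claim; it is the unique preimage of $v$ under the injective map $\lbrack T_{B_i}\rbrack_{k'}$, and you still owe a verification that this preimage is inconsistent with the extra row. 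That verification is not a formal consequence of non-suspiciousness and would require examining the specific shape of $v$ --- at which point you are effectively redoing the sliding-window computation. The cleaner fix is to drop the embedding strategy and run the sliding-window argument of Theorem~1 directly on $\lbrack T_{B_i}'\rbrack$.
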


We will now define \emph{odd} and \emph{even} parts of a polynomial $T$. We write $T(x)=c_0+c_1x+\ldots+c_nx^n \hspace{0.2cm}(c_n\neq 0)$ as before, and define
\begin{align*}o(x) &= c_1x + c_3x^2+\ldots +c_{n-1}x^{\frac{n}{2}}\\
e(x) &= c_0+c_2x+\ldots +c_nx^{\frac{n}{2}}\end{align*} if $n$ is even, and
\begin{align*}o(x) &= c_1x+c_3x^2+\ldots +c_nx^{\frac{n+1}{2}}\\
e(x) &= c_0+c_2x+\ldots +c_{n-1}x^{\frac{n-1}{2}}\end{align*} if $n$ is odd. With these definitions, it follows immediately that $T(x) = \frac{o(x^2)}{x} + e(x^2)$. We are now ready to present a characterization of the nonsuspicious polynomials.

\begin{theorem} Suppose that $T$ is a polynomial in $(\Z/2)[x]$, of degree $n\geq 1$.

\noindent If $n$ is even, then

\emph{i.} $T_{B_1}$ is injective if and only if $c_0\neq 0$ and $(o,e)=1$, and

\emph{ii.} $T_{B_2}$ is injective if and only if $(o,e)=1$.

\noindent If $n$ is odd, then

\emph{iii.} $T_{B_1}$ is injective if and only if $(o,e)=1$, and

\emph{iv.} $T_{B_2}$ is injective if and only if $c_0\neq 0$ and $(o,e)=1$.
\end{theorem}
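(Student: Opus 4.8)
The plan is to analyze the matrices $\lbrack T_{B_1}\rbrack$ and $\lbrack T_{B_2}\rbrack$ directly and reduce the injectivity question to a polynomial divisibility condition. Since these matrices have the block-banded form shown in Figure \ref{tb1}, built from the $2\times(\text{small})$ blocks $C$, the key observation is that multiplication by such a matrix is, up to the deletion of the first $n$ and last $n+1$ rows, the same as multiplication of polynomials. Specifically, I would first record the identity $T(x) = \frac{o(x^2)}{x} + e(x^2)$ and observe that applying $T_{B_1}$ (resp.\ $T_{B_2}$) to a block $b$, viewed as a polynomial $b(x)$, amounts to forming $T(x)\,b(x^2)$ and then extracting a window of $2k$ consecutive coefficients. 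The crucial point is that the coefficients of $T(x)b(x^2)$ split by parity: the even-index coefficients come from $e(x)b(x)$ and the odd-index coefficients come from $o(x)b(x)$ (after reindexing $x^2\mapsto x$). So a block $b$ is in the kernel of the ``un-windowed'' map exactly when $e(x)b(x)$ and $o(x)b(x)$ are both supported outside the relevant index range, i.e.\ concentrated near the two ends.

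Next I would make this precise. A nonzero $b\in(\Z/2)^{k+\frac{n}{2}}$ lies in $\ker T_{B_1}$ iff the $2k$ middle coefficients of $T(x)b(x^2)$ all vanish, which (after the parity split) says that $e(x)b(x)$ has its support confined to a bounded set of low-degree and high-degree terms — a ``head'' of bounded length and a ``tail'' of bounded length, with a long run of zeros in between — and likewise for $o(x)b(x)$, with the head/tail lengths dictated by which $n$, $n+1$ rows were deleted and whether we are looking at $T_{B_1}$ versus $T_{B_2}$ (this is where the asymmetry in items i--iv comes from, and where $c_0\neq 0$ enters: $c_0\neq 0$ is equivalent to $e(0)\neq 0$, which controls whether the ``head'' of $e(x)b(x)$ can be forced to vanish). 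For $k$ large, the only way a product $g(x)b(x)$ of bounded-degree $g$ with $b$ of large degree can have a long internal run of zeros is if $b(x)$ itself essentially factors as (low part) $+ x^{N}\cdot$(high part) modulo the ideal generated by $g$; iterating, one shows that the existence of such a $b$ for arbitrarily large $k$ forces $o(x)$ and $e(x)$ to share a common factor — because a common kernel vector for ``multiply by $o$, then window'' and ``multiply by $e$, then window'' produces, via a resultant/gcd argument, a genuine common divisor of $o$ and $e$.

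Then I would prove the converse direction: if $(o,e)\neq 1$, exhibit an explicit nonzero $b$ in the kernel for all large $k$. If $d(x)\mid o(x)$ and $d(x)\mid e(x)$ with $\deg d\geq 1$, write $o = d\,\tilde o$, $e = d\,\tilde e$; then choosing $b(x)$ to be a suitable multiple of the ``complementary'' polynomial so that $o(x)b(x)$ and $e(x)b(x)$ are both divisible by a high power of $x$ times a low-degree remainder — concretely, taking $b$ built from $\tilde e$ and $\tilde o$ arranged so that the product has the required long zero run — gives a nonzero kernel element, establishing non-injectivity. Similarly, when $n$ is even and $c_0 = 0$ (so $x\mid e(x)$, i.e.\ $e(0)=0$), I would show $T_{B_1}$ fails to be injective even if $(o,e)=1$, by using the extra freedom at the low-degree end; and symmetrically $c_0=0$ breaks $T_{B_2}$ in the odd case. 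Finally I would match up each of the four cases i--iv with the precise head/tail lengths to confirm that $c_0\neq 0$ appears exactly where claimed.

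The main obstacle I anticipate is the forward direction: turning ``the windowed product has a long zero run for arbitrarily large $k$'' into ``$o$ and $e$ have a common factor.'' The naive argument only gives that $b$ is large and structured; one needs a clean pigeonhole or linear-algebra argument — perhaps tracking the sequence of ``states'' (bounded-length windows of $b$) as one slides along and arguing that a repeated state forces periodicity, hence a factorization of the relevant polynomials — to extract an honest common divisor rather than just an approximate one. Handling the two maps simultaneously (the kernel vector must work for the $o$-product and the $e$-product at once) is what ultimately pins down the common factor, and getting the bookkeeping of indices right for both parities of $n$ and both $B_1, B_2$ is the fiddly part.
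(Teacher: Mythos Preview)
Your polynomial interpretation is correct and the parity split $T(x)b(x^2) = x^{-1}o(x^2)b(x^2) + e(x^2)b(x^2)$ is exactly the right way to see that the kernel condition decouples into conditions on $o(x)b(x)$ and $e(x)b(x)$ separately. But the paper takes a much more direct route that sidesteps precisely the obstacle you flag. Rather than working with general $k$ and trying to extract a common factor from ``long internal zero runs,'' the paper first observes that the block-banded structure of $[T_{B_1}]$ means injectivity for arbitrary $k\ge \lfloor n/2\rfloor$ is \emph{equivalent} to injectivity at the minimal value $k=\lfloor n/2\rfloor$, where the matrix is square. At that point the question becomes: is $\det[T_{B_1}](k=n/2)\neq 0$? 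After row and column permutations and one cofactor expansion (which produces the factor $c_0$), the remaining matrix is literally the Sylvester matrix of $o$ and $e$, so the determinant equals $\pm c_0\cdot\operatorname{Res}(o,e)$, and the standard fact that the resultant is nonzero iff the polynomials are coprime finishes the proof in one stroke.

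So the gap in your proposal --- turning a kernel vector for large $k$ into an honest common divisor via some pigeonhole or periodicity argument --- is real, and you are right to worry about it: that direction is genuinely awkward in your framework because for non-square matrices you are chasing a rank condition rather than a determinant. The paper's reduction to the square case eliminates the difficulty entirely: once the matrix is square, ``injective'' $=$ ``invertible'' $=$ ``nonzero determinant,'' and the determinant \emph{is} the resultant. Your approach could in principle be completed (the kernel condition for the square case, read through your parity split, is exactly the statement that $o$ and $e$ admit a nontrivial linear combination $\alpha(x)o(x)+\beta(x)e(x)=0$ with $\deg\alpha<\deg e$, $\deg\beta<\deg o$, which is the classical resultant criterion), but you would end up rediscovering the Sylvester matrix anyway. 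The paper's path is shorter and avoids the bookkeeping you anticipate in the final paragraph.
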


\begin{cor}
$T$ is nonsuspicious if and only if $c_0\neq 0$ and $(o, e)=1$.
\end{cor}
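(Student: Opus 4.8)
The plan is to deduce the corollary directly from the Theorem, since the statement is essentially a logical repackaging of parts \emph{i}--\emph{iv}. First I would unwind the definitions: $T$ is \emph{suspicious} precisely when there exists some $k \geq \lfloor n/2 \rfloor$ for which $T_{B_1}$ or $T_{B_2}$ fails to be injective, so $T$ is nonsuspicious exactly when, for every $k \geq \lfloor n/2 \rfloor$, both $T_{B_1}$ and $T_{B_2}$ are injective on the full space. Reading the four assertions of the Theorem as statements about injectivity over this whole range of $k$ (which is how they are proved), nonsuspiciousness is equivalent to the conjunction of the two relevant equivalences.

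Next I would split on the parity of $n$. If $n$ is even, combining \emph{i} and \emph{ii} shows that nonsuspiciousness is equivalent to $\big(c_0 \neq 0 \text{ and } (o,e)=1\big)$ together with $(o,e)=1$; since the second clause is implied by the first, this collapses to $c_0 \neq 0$ and $(o,e)=1$. If $n$ is odd, combining \emph{iii} and \emph{iv} gives $(o,e)=1$ together with $\big(c_0 \neq 0 \text{ and } (o,e)=1\big)$, which again collapses to the same condition. In both cases we obtain exactly $c_0 \neq 0$ and $(o,e)=1$, as claimed.

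Since the real work is carried out in the Theorem, there is no substantive obstacle in the corollary itself; the only point requiring care is to use the Theorem's injectivity statements with the correct domains — in the even case the maps $(\Z/2)^{k+n/2} \to (\Z/2)^{2k}$, in the odd case the maps with domains $(\Z/2)^{k+\lfloor n/2 \rfloor}$ and $(\Z/2)^{k+\lfloor (n+1)/2 \rfloor}$ — and to invoke the restriction $k \geq \lfloor n/2 \rfloor$ exactly as in the definition of suspicious, so that the ``only if'' direction (which produces a single bad value of $k$ when the condition fails) matches the existential quantifier appearing in that definition.
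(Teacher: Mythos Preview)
Your proposal is correct and matches the paper's intent: the corollary is stated immediately after the Theorem with no separate proof, because it is just the logical conjunction of parts \emph{i}--\emph{iv} applied to the definition of suspicious. Your parity split and collapse of the redundant $(o,e)=1$ clause is exactly the trivial deduction the paper leaves implicit.
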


\begin{proof}
Consider the map $T_{B_1}$, and assume that $n$ is even. We claim that for any $k\geq \frac{n}{2}$, $T_{B_1}$ is injective if and only if $T_{B_1}$ is injective in the special case of $k=\frac{n}{2}$. Sufficiency is clear; necessity follows from the structure of the matrix $\lbrack T_{B_1}\rbrack$, as in the following: we will write $T_{B_1}(k=\frac{n}{2})$ for the operator $T_{B_1}$ in the case where $k=\frac{n}{2}$.

Suppose\[T_{B_1}x = 0,\] where $x=x_0x_1\cdots x_{k+\frac{n}{2} - 1}\in(\Z/2)^{k+\frac{n}{2}}$. Since the submatrix which consists of the first $n$ rows and columns of $T_{B_1}$ is precisely $\lbrack T_{B_1}\rbrack(k=\frac{n}{2})$, we see that \[x_0=x_1=\cdots = x_{n-1} = 0.\] We now observe that the submatrix which consists of the entries in rows $2$ through $n+1$ and columns $1$ through $n$ is \emph{also} precisely $\lbrack T_{B_1}\rbrack(k=\frac{n}{2})$, so that \[x_2 = \cdots = x_n = x_{n+1} = 0.\]

Continuing this process, we conclude that $x=0$, so that $T_{B_1}$ is injective.

We have shown that it is sufficient to analyze the case of $T_{B_1}(k=\frac{n}{2})$. We now take the determinant of $\lbrack T_{B_1}\rbrack(k=\frac{n}{2})$.

We use the fact that the determinant changes only in sign under permutations of the rows and columns. First,

\[\det{\lbrack T_{B_1}\rbrack(k=\tfrac{n}{2})} = \pm \det\begin{bmatrix}
c_{n-1} & c_{n-3} & \cdots & c_1\\
& c_{n-1} & c_{n-3} & \cdots & c_1\\
&&&&\cdots\\
&&&c_{n-1}&c_{n-3}&\cdots &c_1&0\\
c_n & c_{n-2} & \cdots & c_0\\
& c_n & c_{n-2} & \cdots & c_0\\
&&&&\cdots\\
&&&&c_n & c_{n-2} & \cdots & c_0
\end{bmatrix}.\]

By expansion along the last column, we conclude that \[\det{\lbrack T_{B_1}\rbrack(k=\tfrac{n}{2})} = \pm c_0\det\begin{bmatrix}
c_{n-1} & c_{n-3} & \cdots & c_1\\
& c_{n-1} & c_{n-3} & \cdots & c_1\\
&&&&\cdots\\
&&& c_{n-1} & c_{n-3} & \cdots & c_1\\
c_n & c_{n-2} & \cdots & c_2 & c_0\\
& c_n & c_{n-2} & \cdots & c_2 & c_0\\
&&&&\cdots\\
&&&c_n & c_{n-2} & \cdots & c_2 & c_0
\end{bmatrix}.\]

We now reverse the order of the columns, and obtain
\[\det{\lbrack T_{B_1}\rbrack(k=\tfrac{n}{2})} = \pm c_0\det\begin{bmatrix}
&&& 0 & c_1 & \cdots & c_{n-1}\\
&&&&\cdots\\
&0 & c_1 & \cdots & c_{n-1}\\
0 & c_1 & \cdots & c_{n-1}\\
&&& c_0 & c_2 & \cdots & c_n\\
&&&&\cdots\\
& c_0 & c_2 & \cdots & c_n\\
c_0 & c_2 & \cdots & c_n
\end{bmatrix}\]

We now permute the order of the rows, to conclude that
\[\det{\lbrack T_{B_1}\rbrack(k=\tfrac{n}{2})} = \pm c_0\det\begin{bmatrix}
0 & c_1 & \cdots & c_{n-1}\\
& 0 & c_1 & \cdots & c_{n-1}\\
&&&&\cdots\\
&&& 0 & c_1 & \cdots & c_{n-1}\\
c_0 & c_2 & \cdots & c_n\\
& c_0 & c_2 & \cdots & c_n\\
&&&&\cdots\\
&&& c_0 & c_2 & \cdots & c_n
\end{bmatrix}.\]

The latter determinant is precisely the \emph{resultant} of the polynomials $o$ and $e$ defined above. By Corollary 1.8 in \cite{Janson}, the resultant of $o$ and $e$ is nonzero if and only if $(o, e)=1$. We have thus proved part i.

The proofs of the other assertions are similar; to prove part ii we use expansion in the first column; the proof of part iii only requires interchanging rows and columns, and to prove part iv we use expansion in the first and last columns successively.
\end{proof}

We close this section with a corollary which illustrates an interesting connection between the notion of suspiciousness and the algebraic properties of the polynomial in question.

\begin{cor}
Suppose $T$ (as above) is irreducible, with $c_0\neq 0$. Then $T$ is not suspicious.
\end{cor}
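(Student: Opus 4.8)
The plan is to derive this from the Theorem (the characterization of nonsuspicious polynomials) together with its first Corollary, which states that $T$ is nonsuspicious precisely when $c_0\neq 0$ and $(o,e)=1$. Since the hypothesis already gives $c_0\neq 0$, the entire task reduces to showing that an irreducible $T$ with nonzero constant term has relatively prime odd and even parts, i.e. $(o,e)=1$.

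First I would recall the identity established just before the Theorem: $T(x) = \frac{o(x^2)}{x} + e(x^2)$, equivalently $x\,T(x) = o(x^2) + x\,e(x^2)$. The key idea is to argue by contrapositive: suppose $(o,e)\neq 1$, so there is a nonconstant polynomial $d(y)$ dividing both $o(y)$ and $e(y)$ in $(\Z/2)[y]$. Then $d(x^2)$ divides $o(x^2) + x\,e(x^2) = x\,T(x)$. Since $d$ is nonconstant, $d(x^2)$ is nonconstant; and because $c_0\neq 0$, the constant term of $o$ is $0$ but the constant term of $e$ is $c_0\neq 0$, so $y\nmid e(y)$, hence $y \nmid d(y)$, hence $x\nmid d(x^2)$. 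Therefore $d(x^2)$ is a nonconstant factor of $x\,T(x)$ that is coprime to $x$, so $d(x^2) \mid T(x)$. Comparing degrees, $\deg d(x^2) = 2\deg d \geq 2$, while $\deg T = n$; one checks $2\deg d \le n$ (since $\deg o,\deg e \le \lceil n/2\rceil$, with the relevant bound giving $\deg d \le \lfloor n/2\rfloor$), so $d(x^2)$ is a proper nonconstant divisor of $T$, contradicting irreducibility. Hence $(o,e)=1$, and by the Corollary $T$ is not suspicious.

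The main obstacle — really the only subtle point — is making sure the divisibility $d(x^2)\mid x T(x)$ correctly transfers to $d(x^2)\mid T(x)$ and that $d(x^2)$ is genuinely a \emph{proper} divisor of $T$, so that irreducibility is actually contradicted rather than the trivial factorization $T = 1\cdot T$. The coprimality of $d(x^2)$ with $x$ handles the transfer, and this coprimality is exactly where the hypothesis $c_0\neq 0$ is used: it guarantees $d(0)\neq 0$. For the properness, I would carefully track the degree bounds on $o$ and $e$ in both the even and odd cases from their definitions, concluding $2\deg d \le n$ with equality only in degenerate situations that one rules out (e.g. if $2\deg d = n$ then $d(x^2)$ and $T$ are associates, forcing $o$ and $e$ to be scalar multiples of $d$, which contradicts the leading/trailing coefficient structure). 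Once these bookkeeping points are settled the corollary follows immediately.

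An alternative and perhaps cleaner framing, which I would mention, is to observe that $(o,e)\neq 1$ is equivalent to $T$ having a repeated factor after the substitution $x\mapsto x^2$, but since we are in characteristic $2$ the substitution $y = x^2$ is a bijection on irreducibles only up to Frobenius; the direct degree argument above avoids these complications and is self-contained, relying only on the displayed identity and the definition of irreducibility.
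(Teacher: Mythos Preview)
Your proposal is correct and follows essentially the same contrapositive route as the paper: assume a nontrivial common factor $d$ of $o$ and $e$, and use the identity $T(x)=o(x^2)/x+e(x^2)$ to exhibit $d(x^2)$ as a divisor of $T$. The paper's write-up is slightly more direct (it factors $T(x)=f(x^2)\bigl(o_f(x^2)/x+o_e(x^2)\bigr)$ without passing through $xT(x)$), and your properness worry is dispatched in one stroke by the characteristic-$2$ identity $d(x^2)=d(x)^2$, which immediately forces $T$ to be reducible once $\deg d\ge 1$---so the degree bookkeeping you outline is unnecessary.
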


\begin{proof}
We will prove the contrapositive. Suppose that $T$ is suspicious, with $c_0\neq 0$, so that $o$ and $e$ have a nontrivial common factor $f$. Then $o(x)=f(x)o_f(x)$ and $e(x) = f(x)o_e(x)$ for some polynomials $o_f, o_e$. It follows that \[T(x)=\frac{o(x^2)}{x}+e(x^2)=f(x^2)\left(\frac{o_f(x^2)}{x}+o_e(x^2)\right).\] Thus $f(x^2)|T(x)$, so that $T$ is reducible.
\end{proof}

\section{Intersections}\label{intersections}

In this section, we investigate the sizes of intersections of the sets $A_1$, $A_2$, $B_1$, and $B_2$. We will consider the case where the elements are of even length, for specificity. We first note that the only element of $A_1\cap A_2$ is the zero string $0^{2k}$. It follows that

\begin{prop}
We have
\[|A_1\cap A_2| = |A_1\cap A_2 \cap B_1| = |A_1\cap A_2\cap B_2| = |A_1\cap A_2\cap B_1\cap B_2| = 1.\]
\end{prop}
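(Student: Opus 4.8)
The plan is to establish that the only accessible block lying in $A_1 \cap A_2$ is the all-zeros string $0^{2k}$, and then deduce the remaining three equalities as immediate consequences. For the first claim, recall from Section \ref{recursion_formulas} that $A_1$ consists of blocks of the form $x_0 0 x_1 0 \cdots x_{k-1} 0$ and $A_2$ consists of blocks of the form $0 x_0 0 x_1 \cdots 0 x_{k-1}$. A block $b = b_0 b_1 \cdots b_{2k-1}$ lying in $A_1$ has $b_{2i+1} = 0$ for all $i$ (the odd-indexed entries vanish), while a block lying in $A_2$ has $b_{2i} = 0$ for all $i$ (the even-indexed entries vanish). Hence a block in the intersection has every entry equal to $0$, i.e.\ $b = 0^{2k}$. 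Conversely, $0^{2k} \in A_1$ since $0^k \in \algA(k)$ (the zero string is accessible: it appears, padded, in every automaton with initial state $1$, as soon as the row is long enough) and $T_{A_1}(0^k) = 0^{2k}$; similarly $0^{2k} \in A_2$. Therefore $|A_1 \cap A_2| = 1$.

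The three remaining quantities are all of the form $|A_1 \cap A_2 \cap (\text{something})|$, hence are intersections of $\{0^{2k}\}$ with other sets, so each is either $0$ or $1$. To see that each equals $1$ it suffices to check that $0^{2k} \in B_1$ and $0^{2k} \in B_2$. This follows from the defining formula $T_{B_i} b = T(0[b^2]0)\, 0_{2k+2n}|_{n+i-1}^{n+2k+i-2}$: taking $b = 0^{k+\lfloor n/2 \rfloor}$ (which is accessible for the same reason as above, for $k$ large), we get $0[b^2]0 = 0^{2k+n+1}$, and multiplying the zero polynomial by $T$ yields the zero polynomial, so after truncation $T_{B_i}(0^{k+\lfloor n/2 \rfloor}) = 0^{2k}$. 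Thus $0^{2k} \in B_1 \cap B_2$, and consequently $0^{2k}$ lies in each of $A_1 \cap A_2 \cap B_1$, $A_1 \cap A_2 \cap B_2$, and $A_1 \cap A_2 \cap B_1 \cap B_2$, each of which therefore has exactly one element.

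There is essentially no obstacle here; the only point requiring a word of care is the claim that the zero string of the relevant length is accessible, which is where the hypothesis that $k$ is sufficiently large (implicit in "these recursions hold for sufficiently large $k$") enters — for $k$ beyond a fixed bound depending only on $\deg T$ and the initial state, row $1$ of the automaton already contains a run of zeros long enough to furnish every needed zero block. One should also note the parallel statement for odd length, where $A_1$ and $A_2$ are replaced by the images of $T_{A_1}'$ and $T_{A_2}'$; the argument is identical, since $T_{A_1}'$ still forces the entries in the even-indexed slots $1, 3, \dots$ (after the leading entry) to be zero in a way incompatible with membership in the image of $T_{A_2}'$ unless the block is all zeros.
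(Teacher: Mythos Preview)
Your argument is correct and follows exactly the line the paper takes: the paper simply notes that the only element of $A_1\cap A_2$ is the zero string $0^{2k}$ and asserts that the proposition follows, while you have spelled out the details (the parity-of-index observation, the accessibility of the zero block, and the verification that $0^{2k}\in B_1\cap B_2$). There is no substantive difference in approach---your write-up is just a more explicit version of the paper's one-line justification.
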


In the following, there will always be an implicit dependence on the length $k$; for instance, we may write $|A_1(k)\cap B_1(k)| = |A_1\cap B_1|$ unless the length is explicitly required.

We will now introduce several transformations that will be used in the proofs of the next few theorems. These transformations are all defined on $(\Z/2)^{2k}$. We first define \[S_0 = \begin{bmatrix}c_n & c_{n-1} & \cdots & c_0\\
& c_n & c_{n-1} & \cdots & c_0\\
&& c_n & c_{n-1} & \cdots & c_0\\
&&&&&\cdots\\
&&&& c_n & c_{n-1} & \cdots & c_0\end{bmatrix}.\] We now suppose that $n$ is even, and define
\[S_1 = \begin{bmatrix}c_n & c_{n-1} & \cdots & c_0\\
0 & 0 & 0 & \cdots\\
0 & 0 & c_n & c_{n-1} & \cdots & c_0\\
0 & 0 & 0 & 0 & 0 & \cdots\\
&&&&&&\cdots\\
&&&&& c_n & c_{n-1} & \cdots & c_0 & 0\end{bmatrix}\] and
\[S_2 = \begin{bmatrix}0 & 0 & \cdots\\
0 & c_n & c_{n-1} & \cdots & c_0\\
0 & 0 & 0 & 0 & \cdots\\
0 & 0 & 0 & c_n & c_{n-1} & \cdots & c_0\\
&&&&&&\cdots\\
&&&&& c_n & c_{n-1} & \cdots & c_0\end{bmatrix}.\] If $n$ is odd, the transformations $S_1$ and $S_2$ are defined in the same way, except that the zero entry in the lower right-hand corner appears in $S_2$, but not $S_1$: the reason for this zero entry is that the transformations are defined on $(\Z/2)^{2k}$, and the last row must therefore contain an even number of entries. We also require that $k\geq\left\lfloor\frac{n}{2}\right\rfloor+1$ when using these transformations; otherwise, these matrices would not contain all the coefficients $c_0,\ldots,c_n$. In order to avoid trivialities, we will also assume that $T$ is of positive degree.

We now turn to the intersection $B_1\cap B_2$.

\begin{theorem}\label{intb1b2}
If $T$ is a polynomial in $(\Z / 2)[x]$, of degree $n\geq 1$, and with $c_0\neq 0$, then the size of $B_1(k)\cap B_2(k)$ decreases monotonically for $k\geq n$, and, hence, is independent of $k$ for sufficiently large $k$.

\end{theorem}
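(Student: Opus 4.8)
The plan is to show that $B_1(k) \cap B_2(k)$, viewed appropriately, embeds into $B_1(k+1) \cap B_2(k+1)$ via a length-increasing map, which gives the monotonic decrease, and then to invoke the fact that a monotonically decreasing sequence of nonnegative integers is eventually constant. First I would recall from Section \ref{recursion_formulas} that an element of $B_i(k)$ has the form $T_{B_i} b$ for $b \in \algA(k + \lfloor \tfrac{n}{2} \rfloor)$ (or the odd-$n$ analogue), and that this can be rewritten via the matrices $S_1$, $S_2$ from this section: a string $w \in (\Z/2)^{2k}$ lies in $B_1(k)$ iff $w = S_1 v$ for some $v$ arising from squaring an accessible block, and similarly for $B_2$ with $S_2$. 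So $w \in B_1(k) \cap B_2(k)$ is equivalent to a pair of linear conditions on $w$ together with the condition that the relevant ``de-squared'' preimages are accessible blocks in row $r$ of the automaton. The key structural point, which I would extract carefully from the shift-pattern of $S_1$ and $S_2$, is that the defining conditions for membership in $B_1(k) \cap B_2(k)$ are ``local'' in the sense that each is a conjunction of constraints each involving only a bounded window of consecutive coordinates; increasing $k$ by one simply appends one more window's worth of constraints at the right end.

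The main step is the construction of the injection $B_1(k) \cap B_2(k) \hookrightarrow B_1(k+1) \cap B_2(k+1)$. Given $w = w_0 \cdots w_{2k-1} \in B_1(k) \cap B_2(k)$, I would produce $\widetilde w = w_0 \cdots w_{2k-1} 0 0 \in (\Z/2)^{2k+2}$ (padding on the right by the appropriate two zeros dictated by the $S_i$ pattern) and check that $\widetilde w \in B_1(k+1) \cap B_2(k+1)$. Surjectivity of $T_{B_i}$ onto $B_i$ means I must exhibit accessible-block preimages: since $w \in B_i(k)$ there is an accessible $b$ in some row $r$ with $T_{B_i} b = w$; I would check that the natural extension $b' = b 0_{\ell}$ (padding $b$ with zeros) is still accessible — which holds because appending zeros to an accessible block of the automaton with initial state $1$ keeps it accessible, as every row is a polynomial and a window can always be slid to include trailing zeros — and that $T_{B_i} b' = \widetilde w$. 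The map $w \mapsto \widetilde w$ is visibly injective since it only appends fixed symbols. This yields $|B_1(k) \cap B_2(k)| \leq |B_1(k+1) \cap B_2(k+1)|$... but wait, that is the wrong direction; so instead I would run the argument the other way, \emph{restricting} an element of $B_1(k+1) \cap B_2(k+1)$ by deleting a bounded number of coordinates from one end and showing the restriction lands in $B_1(k) \cap B_2(k)$ and that this restriction map is injective on the intersection. The correct direction comes from the observation that the rightmost coordinates of any $w \in B_1(k) \cap B_2(k)$ are forced (the trailing window of $S_1$ ends in a $0$ while that of $S_2$ does not, or vice versa, forcing the relevant preimage coordinates, hence forcing a tail pattern), so truncation is reversible and the map $B_1(k+1)\cap B_2(k+1) \to B_1(k)\cap B_2(k)$ by dropping the last two symbols is a well-defined injection for $k \geq n$.

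The hard part will be verifying that truncation genuinely lands back in $B_1(k) \cap B_2(k)$ rather than just in the ambient linear image: I must ensure the truncated string is still the $T_{B_i}$-image of an \emph{accessible} block, not merely of an arbitrary vector. This requires understanding how an accessible block $b$ in row $r$ producing a long element of $B_i(k+1)$ restricts to an accessible block producing the truncated element of $B_i(k)$ — essentially that a sub-block of an accessible block is accessible, which is immediate, combined with bookkeeping on the shifts $T_{B_i}b = T(0[b^2]0)0_{2k+2n}|_{\cdots}$ to confirm the index windows match up after removing two symbols and two symbols' worth of $b$. The threshold $k \geq n$ is exactly what guarantees all the coefficients $c_0, \dots, c_n$ appear in $S_1, S_2$ (as already noted in this section we need $k \geq \lfloor \tfrac n 2 \rfloor + 1$ for the matrices, and $k \geq n$ to have enough room for the overlap argument). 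Once monotonic non-increase is established for $k \geq n$, eventual constancy is automatic since the sequence takes values in the nonnegative integers.
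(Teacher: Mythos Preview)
Your overall strategy---build an injection from $B_1(k+1)\cap B_2(k+1)$ into $B_1(k)\cap B_2(k)$ by truncation, then invoke monotonicity of nonnegative integers---is the same as the paper's. But the mechanism you propose for injectivity is wrong, and this is a genuine gap. You write that $w\in B_1(k)$ iff $w=S_1v$ for some $v$ coming from a squared accessible block; that is not what $S_1$ is. The matrices $S_1,S_2$ in this section are \emph{not} the matrices of $T_{B_1},T_{B_2}$ (those are the staircase matrices $[T_{B_i}]$ of Section~\ref{injectivity}); rather $S_0,S_1,S_2$ are auxiliary ``test'' operators on $(\Z/2)^{2k}$ whose kernels capture the intersections. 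Your claim that ``the trailing window of $S_1$ ends in a $0$ while that of $S_2$ does not, forcing a tail pattern'' is therefore aimed at the wrong objects, and in any case does not yield the determinism you need.

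The missing idea is the paper's key lemma: every $b\in B_1\cap B_2$ lies in $\ker S_0$. One checks this by computing $S_0[T_{B_1}]$ and $S_0[T_{B_2}]$ directly and observing that the even-indexed rows of the first product and the odd-indexed rows of the second are identically zero; hence if $b=T_{B_1}x=T_{B_2}y$ then $S_0b=0$. The rows of $S_0$ now read $c_nb_i+c_{n-1}b_{i+1}+\cdots+c_0b_{i+n}=0$, and since $c_0\neq 0$ this solves each $b_{i+n}$ in terms of $b_i,\ldots,b_{i+n-1}$. Thus $b$ is completely determined by its initial segment $b_0\cdots b_{n-1}$, which is itself accessible. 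This is exactly what makes truncation injective (indeed, it gives much more: an injection into a fixed finite set indexed by accessible $n$-blocks). Without this kernel computation you have no control over the trailing coordinates of an element of $B_1\cap B_2$, and the ``restriction is reversible'' step in your sketch is unsupported. The part you flagged as hard---that the truncation lands in $B_1(k)\cap B_2(k)$ because sub-blocks of accessible blocks are accessible and the index windows for $T_{B_i}$ line up---is actually the routine half; the determinism from $\ker S_0$ is the substantive half you are missing.
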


\begin{proof}
We will suppose that $n$ is even. Precisely the same recursions described below hold if $n$ is odd, so the argument is identical. The proof will be based upon the following observation: \emph{All blocks in $B_1\cap B_2$ are in the kernel of the transformation $S_0$}.

To see this, suppose $b = b_0b_1\cdots b_{2k-1}\in B_1\cap B_2$. Since $b\in B_1$, we have $b = T_{B_1}x$ for some $x\in \algA(k+\frac{n}{2})$. Consider the product $S_0T_{B_1}$. A direct computation shows that the matrix $\lbrack S_0T_{B_1}\rbrack$ is given by

\[\begin{bmatrix}c_n & c_{n-1} & \cdots & c_0\\
& c_n & c_{n-1} & \cdots & c_0\\
&& c_n & c_{n-1} & \cdots & c_0\\
&&&&&\cdots\\
&&&& c_n & c_{n-1} & \cdots & c_0\end{bmatrix}\begin{bmatrix}c_{n-1} & c_{n-3} & \cdots & c_1 & 0\\
c_n & c_{n-2} & \cdots & c_2 & c_0\\
& c_{n-1} & c_{n-3} & \cdots & c_1 & 0\\
& c_n & c_{n-2} & \cdots & c_2 & c_0\\
&&&&\cdots\\
&&&&\cdots\\
&&& c_{n-1} & c_{n-3} & \cdots & c_1 & 0\\
&&& c_n & c_{n-2} & \cdots & c_2 & c_0\end{bmatrix}\]

\[= \begin{bmatrix}0 & \cdots\\
c_n^2 & c_{n-1}^2 & \cdots & c_0^2 & 0 & \cdots && 0\\
0 & \cdots\\
0 & c_n^2 & c_{n-1}^2 & \cdots & c_0^2 & 0 & \cdots & 0\\
&&&&&\cdots\\
0 & \cdots && 0 & c_n^2 & c_{n-1}^2 & \cdots & c_0^2
\end{bmatrix} = \begin{bmatrix}0 & \cdots\\
c_n & c_{n-1} & \cdots & c_0 & 0 & \cdots && 0\\
0 & \cdots\\
0 & c_n & c_{n-1} & \cdots & c_0 & 0 & \cdots & 0\\
&&&&&\cdots\\
0 & \cdots && 0 & c_n & c_{n-1} & \cdots & c_0\end{bmatrix}.\] It follows that the $i$th coordinate of $S_0T_{B_1}x$ is $0$ if $i = 0, 2, \ldots$ is even.

The situation is precisely analogous for $T_{B_2}$; in this case we conclude that odd coordinates of $S_0T_{B_2}y$ are zero for $y \in \algA(k+\frac{n}{2})$. It follows that $S_0b=0$, so that $b\in \ker S_0$.

From this we see that $c_nb_0 + c_{n-1}b_1 + \ldots + c_0b_n = 0$, $c_nb_1 + c_{n-1}b_2 + \ldots + c_0b_{n+1} = 0$, etc. Thus, $b_n$ is determined by $b_0$, $b_1$, $\ldots$, $b_{n-1}$. Moreover, $b_{n+1}$ is determined by $b_1$, $b_2$, $\ldots$, $b_n$, and so on. It follows that $b$ is determined entirely by the accessible block $b_0b_1\cdots b_{n-1}$. We employ similar methods to conclude that this also holds for blocks of odd length.

Suppose $n \leq j < k$. From the above we see that every block in $B_1 \cap B_2$ of length $r\geq n$ is generated by a block of length $n$. The map that assigns to a block of length $k$ the block of length $j$ with the same generator is injective, so that \[|B_1(j)\cap B_2(j)| \geq |B_1(k)\cap B_2(k)|.\] In particular, the sequence $|B_1(k)\cap B_2(k)|$ is nonincreasing for $k\geq n$; since $|B_1(k)\cap B_2(k)|\geq 1$, it follows that the sequence is eventually constant.

\end{proof}

\begin{cor}
The conclusions of Theorem \ref{intb1b2} also hold for $A_1\cap B_1\cap B_2$ and $A_2\cap B_1\cap B_2$.
\end{cor}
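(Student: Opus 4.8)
The plan is to reduce the claimed statements about $A_1\cap B_1\cap B_2$ and $A_2\cap B_1\cap B_2$ to the structural fact established in the proof of Theorem \ref{intb1b2}: every block in $B_1\cap B_2$ of length $r\geq n$ is completely determined by its initial segment of length $n$, via the kernel-of-$S_0$ recursion $c_n b_i + c_{n-1}b_{i+1}+\dots+c_0 b_{i+n}=0$. The key observation is that intersecting with $A_1$ or $A_2$ only imposes \emph{additional} linear constraints on a block (membership in $A_1$ forces the odd-indexed coordinates to vanish, membership in $A_2$ forces the even-indexed coordinates to vanish), so $A_j\cap B_1\cap B_2\subseteq B_1\cap B_2$ still consists of blocks determined by their length-$n$ prefix. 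Concretely, for $n\le j<k$ I would define the map sending a block of length $k$ in $A_j\cap B_1\cap B_2$ to the block of length $j$ sharing the same length-$n$ generator; this is well-defined because truncating a block that satisfies the $S_0$-recursion and lies in $A_j$ still satisfies those same constraints (the defining conditions for $A_1$, $A_2$ are coordinatewise and hence inherited by prefixes), and it is injective because the length-$n$ generator determines the whole block.

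First I would note that $A_1\cap B_1\cap B_2$ and $A_2\cap B_1\cap B_2$ are each nonempty for every $k$ — they contain the zero string $0^{2k}$ (indeed Proposition 2 already records $|A_1\cap A_2\cap B_1|=|A_1\cap A_2\cap B_2|=1$, and $0^{2k}$ lies in all four sets). Then I would invoke the injective truncation map above to conclude $|A_j(j)\cap B_1(j)\cap B_2(j)|\ge |A_j(k)\cap B_1(k)\cap B_2(k)|$ for $n\le j<k$, so the sequence $k\mapsto |A_j(k)\cap B_1(k)\cap B_2(k)|$ is nonincreasing for $k\ge n$; being a nonincreasing sequence of positive integers, it is eventually constant. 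This mirrors line-for-line the endgame of the proof of Theorem \ref{intb1b2}.

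One point that needs a little care is the interaction between the parity of coordinates forced by $A_1$ versus $A_2$ and the index shift between a length-$k$ block and its length-$j$ truncation: since we truncate from the \emph{right} (keeping the prefix), the coordinate indices $0,1,\dots$ are preserved, so "even-indexed coordinates vanish" is genuinely inherited, and no reindexing subtlety arises. I should also confirm, as in the even-length case, that the analogous statement for blocks of odd length goes through identically — the $S_0$-recursion and the coordinatewise description of $A_1'$, $A_2'$ are unchanged in form — so the corollary holds in full generality. The main (very mild) obstacle is simply making explicit that membership in $A_1$ or $A_2$ is a condition on individual coordinates and therefore compatible with the generator-truncation correspondence; once that is said, the argument is a direct transcription of the preceding proof.
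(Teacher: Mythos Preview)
Your proposal is correct and follows essentially the same approach as the paper: the paper's proof is simply ``Since $A_1\cap B_1\cap B_2,\ A_2\cap B_1\cap B_2\subseteq B_1\cap B_2$, the recursions described above also apply to these sets. The result follows in the same manner.'' Your write-up expands exactly this idea, making explicit that the $A_i$-constraints are coordinatewise and hence inherited by prefixes, and that the zero string guarantees nonemptiness. One cosmetic point: you use the symbol $j$ both for the index in $A_j$ and for the shorter length parameter, which clashes; pick a different letter for one of them.
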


\begin{proof}
Since $A_1\cap B_1\cap B_2,~A_2\cap B_1\cap B_2\subseteq B_1\cap B_2$, the recursions described above also apply to these sets. The result follows in the same manner.
\end{proof}

We now consider the intersections $A_1\cap B_1$ and $A_2\cap B_2$.

\begin{theorem}\label{inta1b1}
Suppose $T$ is a polynomial of degree $n\geq 1$ in $(\Z/2)[x]$, with $c_0\neq 0$. Then $|A_1(k)\cap B_1(k)|$ decreases monotonically for $k\geq n$. If $n$ is odd, and $T$ is as above, or if $n$ is even, and we assume additionally that $T$ has at least one nonzero coefficient $c_i$ with $0<i<n$, then $|A_2(k)\cap B_2(k)|$ decreases monotonically for $k\geq n+1$. In particular, the sizes of the intersections $A_1\cap B_1$ and $A_2\cap B_2$ are independent of $k$ for sufficiently large $k$.
\end{theorem}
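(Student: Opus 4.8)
The plan is to imitate the proof of Theorem \ref{intb1b2} almost verbatim, replacing the operator $S_0$ by $S_1$ when treating $A_1\cap B_1$ and by $S_2$ when treating $A_2\cap B_2$, and supplementing the resulting kernel condition with the fact that elements of $A_1$ are supported on the even coordinates while elements of $A_2$ are supported on the odd coordinates. The first point to record is that $B_1\subseteq\ker S_1$ and $B_2\subseteq\ker S_2$, which requires no new computation: the proof of Theorem \ref{intb1b2} already exhibits $\lbrack S_0T_{B_1}\rbrack$ as having identically vanishing even-indexed rows, and $S_1$ is exactly the operator obtained from $S_0$ by retaining only those rows, so $S_1T_{B_1}=0$; the statement for $T_{B_2}$ is symmetric.

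Next, let $b$ be a block of even length $2k$ lying in $A_1\cap B_1$. Membership in $A_1$ forces $b_1=b_3=\cdots=b_{2k-1}=0$, while $b\in\ker S_1$ gives $\sum_{j=0}^n c_{n-j}b_{m+j}=0$ for the relevant even indices $m$. Substituting the zero pattern annihilates every term with $m+j$ odd, and the relation collapses to a fixed linear recurrence of order at most $\lfloor n/2\rfloor$ for the halved sequence $X_m:=b_{2m}$, whose two extreme coefficients are $c_n$ and one of $c_0,c_1$. Since $c_n\neq 0$ always and $c_0\neq 0$ by hypothesis, after discarding any vanishing outer coefficients one is left with a genuine recurrence that can be run in at least one direction --- and, using the extra structure $b=T_{B_1}\beta$ when $n$ is odd, in both --- so $b$ is pinned down by a bounded-length window of consecutive entries. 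The treatment of $A_2\cap B_2$ is the same with $S_2$, the odd coordinates, and the halved sequence $Y_m:=b_{2m+1}$; the only discrepancies are the position of the boundary zero built into $S_2$ (needed so that these operators act on $(\Z/2)^{2k}$) and the leading zero carried by every element of $A_2$, and accounting for them is what moves the range of validity from $k\geq n$ to $k\geq n+1$. When $n$ is even one must moreover know that $o\not\equiv 0$: in the excluded case $T=c_0+c_nx^n$ the set $B_2$ is itself supported on the even coordinates, so $A_2\cap B_2=\{0^{2k}\}$ is already constant --- hence the extra hypothesis simply removes a degenerate case that is handled trivially on its own.

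With the bounded-memory property established, monotonicity follows exactly as in Theorem \ref{intb1b2}. For $j$ in the stated range, truncating a block $b\in A_i(k)\cap B_i(k)$ to its first $2j$ entries produces a block that is still accessible (a sub-block of an accessible block is accessible), still of the alternating-zero shape defining $A_i$, and --- once $j$ is large enough for the degrees to match up --- equal to $T_{B_i}$ applied to the corresponding truncation of the generating block, hence lying in $A_i(j)\cap B_i(j)$. This truncation map is injective because every element of $A_i(k)\cap B_i(k)$ is determined by a bounded window sitting inside its first $2j$ entries. Therefore $|A_i(j)\cap B_i(j)|\geq|A_i(k)\cap B_i(k)|$, so $|A_i(k)\cap B_i(k)|$ is nonincreasing on the stated range, and being always at least $1$ it is eventually constant.

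I expect the main obstacle to be the verification that the truncation map is well defined --- that the truncation of an element of $B_i(k)$ really is an element of $B_i(j)$, which means relating the truncation of $b$ to a truncation of its generator $\beta$ and checking the degree bookkeeping --- together with the parity case analysis and the isolation of the degenerate binomial case that forces the extra hypothesis when $n$ is even. By contrast, the linear algebra (collapsing $S_ib=0$ to a recurrence under the support constraint and reading off which outer coefficients survive) should be routine once it is organized carefully.
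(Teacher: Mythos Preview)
Your outline is the paper's: show $A_i\cap B_i\subseteq\ker S_i$, use the $A_i$ support constraint to collapse the kernel relations to a fixed-order recurrence on the halved sequence, conclude that every block is determined by a bounded window, and read off monotonicity as in Theorem~\ref{intb1b2}. The even-$n$ case for $A_1\cap B_1$ (outer coefficients $c_n,c_0$, both nonzero) and your observation that the excluded binomial $T=c_0+c_nx^n$ yields $A_2\cap B_2=\{0^{2k}\}$ are both correct and in line with the paper.

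The substantive divergence is the odd-$n$ case for $A_1\cap B_1$. There the surviving recurrence has outer coefficients $c_n$ and $c_1$, and $c_1$ may vanish; your proposed fix---``using the extra structure $b=T_{B_1}\beta$'' to force the recurrence to run in both directions---is not how the paper proceeds, and it is not clear it can be made to work, since $T_{B_1}$ need not be injective and membership in $B_1$ carries no evident information beyond $\ker S_1$ plus accessibility of $\beta$. The paper instead performs a three-way split on the interior coefficients. If some odd-indexed $c_j\neq 0$ with $0<j<n$, one discards the vanishing tail and obtains a recurrence with nonzero ends $c_n,c_j$; then $b$ is determined by a bounded window at the left together with a bounded window at the right. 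If all odd-indexed interior coefficients vanish but some even-indexed one survives, the paper passes to the \emph{reversed} transition rule (the mirror-image automaton), which interchanges the parity roles and reduces to the previous subcase. If all interior coefficients vanish, the recurrence degenerates to $c_n b_m=0$ on a cofinite set of even $m$, and $b$ is determined by a bounded window at the right end only. You anticipated that a parity case analysis would be the crux; what you should expect specifically is this trichotomy, and in particular the reversal trick, rather than a uniform argument from $b\in B_1$. The $A_2\cap B_2$ analysis is then symmetric, with the roles of odd and even $n$ exchanged and $S_2$ in place of $S_1$.
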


We observe that all the conditions imposed on $T$ are certainly satisfied if $T$ is not suspicious.

\begin{proof}
We first consider the case of $A_1\cap B_1$. Suppose that $n$ is even. We first note that all blocks in $A_1\cap B_1$ are in the kernel of $S_1$: indeed, we observe that the entries in odd rows of the matrix $\lbrack ST_{A_1}\rbrack$ consist entirely of zeros, and, as in the proof of Theorem \ref{intb1b2}, the even rows (starting with row $0$) consist entirely of zeros.

Suppose $b$ is an arbitrary block in $A_1\cap B_1$. Then we have $b=T_{A_1}x = T_{B_1}y$ for some $x\in\algA(k)$, $y\in \algA(k+\frac{n}{2})$. By the above reasoning, we see that $Sb = S(T_{A_1}x) = S(T_{B_1}y) = 0$. Write $b=b_0\cdots b_{2k-1}$. Since $b\in \ker S_1$ and $b\in A_1$, we have 
\begin{align*}&b_0 + c_{n-1}b_1 + \ldots + c_1b_{n-1} + b_n = 0,\\
&b_{n+1}=0,\\
&b_2 + c_{n-1}b_3 + \ldots + c_1b_{n+1} + b_{n+2} = 0,\\
&b_{n+3}=0,\end{align*} etc., so that $b_0,\ldots,b_{n-1}$ determine $b$. But in fact, since $b_1 = b_3 = \cdots = b_{n-1} = 0$, it follows that $b$ is determined entirely by $b_0,b_2,\ldots,b_{n-2}$. Moreover, since $b = b_00b_20\cdots b_{2k-2}0 = T_{A_1}x$, we must have $x = b_0b_2\cdots b_{2k-2}$, so that $b_0b_2\cdots b_{n-2}$ is accessible. The rest of the proof of this case is identical to the argument in Theorem \ref{intb1b2}.

Now suppose that $n$ is odd. In this case, we see that a given block $b=b_0\cdots b_{2k-1}$ is in $\ker S_1$ exactly as in the previous case; hence the associated recursions hold here as well. We now consider three cases.

First, suppose that $c_2 = \cdots = c_{n-1} = 0$. In this case, the recursions become \begin{align*}
&b_0+b_n=0,\\
&b_2+b_{n+2}=0,\\
&\ldots\\
&b_{2k-1-n}+b_{2k-1}=0.\end{align*} Since $n$ is odd and $b\in A_1$, we see that $b_1=b_3=\cdots=b_n=\cdots=b_{2k-1}=0$; by the recursions above, we see that \[b = 0\cdots 0b_{2k-n+1}0\cdots 0b_{2k-2}0.\] By arguing in analogy with the even case, we see that $b_{2k-n+1}\cdots b_{2k-2}$ is accessible; since we know that $b$ is determined entirely by $b_{2k-n+1}\cdots b_{2k-2}$, the result follows in this case by reasoning similar to that in Theorem \ref{intb1b2}.

Second, suppose that there exists an odd $i$ with $0<i<n$, and for which $c_i\neq 0$. Let $j$ be the smallest such $i$. The recursions now take the form
\begin{align*}
&b_0 + c_{n-2}b_2 + \ldots + c_jb_{n-j} = 0,\\
&b_2 + c_{n-2}b_4 + \ldots + c_jb_{n-j+2} = 0,\\
&...\\
&b_{2k-n-1}+c_{n-2}b_{2k-n+1} + \ldots + c_jb_{2k-j-1} = 0.
\end{align*} The remaining entries $b_{2k-j+1}$, $\ldots$, $b_{2k-2}$ are not determined by the recursions. We conclude that $b$ is determined by $b_0,b_2,\ldots,b_{n-j-2},b_{2k-j+1},b_{2k-j+3},\ldots,b_{2k-2}$; the result follows as above.

Finally, suppose that $c_i=0$ for all odd $i$ with $0<i<n$, but there is some even $i$ with $c_i\neq 0$, $0<i<n$. We now consider the automaton that results from reversing the order of the coefficients in the transition rule; for example, if the rule (expressed as a block) is $1011$, we consider the rule $1101$.

The automaton that results from this operation is a mirror-image of the original; its accessible blocks are reversals of the accessible blocks in the original. Moreover, since $n$ is odd, our hypothesis guarantees that the reversed transition rule has a nonzero odd coefficient $i$ with $0<i<n$. We have thus reduced this case to the previous one.

The case of $A_2\cap B_2$ is analogous to that of $A_1\cap B_1$; the reasoning is more or less the same, with the transformation $S_2$ in place of $S_1$, and with the reasoning for the odd and even cases reversed from what it was above. We also note that the even case for $A_2\cap B_2$, which is analogous to the odd case for $A_1\cap B_1$, only contains two cases, in view of the hypothesis that the transition rule have a nonzero coefficient in the interior when $n$ is even.
\end{proof}

\begin{theorem}
Let $T$ be a polynomial of degree $n\geq 1$ in $(\Z/2)[x]$, with $c_0\neq 0$. If $n$ is even, suppose that there exists an odd $j$ with $0<j<n$, $c_j\neq 0$. Then the sequence $|A_2(k)\cap B_1(k)|$ decreases monotonically for $k\geq n$, and the sequence $|A_1(k)\cap B_2(k)|$ decreases monotonically for $k\geq n+1$. In particular, these sequences are independent of $k$ for sufficiently large $k$.
\end{theorem}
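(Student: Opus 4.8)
The proof will follow the template established in Theorems \ref{intb1b2} and \ref{inta1b1}. The first step is to exhibit, for each of the two intersections, a linear map on $(\Z/2)^{2k}$ — built from blocks of shifted coefficient rows in the manner of $S_0$, $S_1$, and $S_2$ — whose kernel contains the entire intersection. Concretely, for $A_2\cap B_1$ I would look for a matrix $S$ such that $[S\,T_{A_2}]$ has all of its, say, even-indexed rows identically zero while $[S\,T_{B_1}]$ has all of its odd-indexed rows identically zero; then for $b = T_{A_2}x = T_{B_1}y$ the even coordinates of $Sb$ vanish by the first computation and the odd coordinates vanish by the second, so $Sb = 0$. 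The required row-vanishing identities are verified exactly as in the proof of Theorem \ref{intb1b2}, by multiplying out the two matrices and using $c_i^2 = c_i$ in $\Z/2$; the analogue for $A_1\cap B_2$ is obtained by interchanging the parities of the rows.

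The second step converts membership in $\ker S$, together with the structural constraint that a block of $A_2$ (resp.\ $A_1$) has every other coordinate equal to $0$, into a system of linear recursions showing that a block in the intersection is completely determined by a bounded window of its coordinates — a window of length roughly $n$. This is where the hypotheses on $T$ enter, and where I expect the argument to branch into cases according to which coefficients $c_i$ are nonzero, just as in Theorem \ref{inta1b1}: the possibility that all of the ``relevant'' coefficients vanish must be excluded, and when $n$ is even it is precisely the hypothesis that some odd $c_j$ with $0<j<n$ is nonzero that does so (note that for even $n$ this hypothesis cannot be arranged by reversing the rule, since reversal preserves the parity of coefficient indices; this is why a stronger hypothesis than ``some interior coefficient is nonzero'' is required here than in Theorem \ref{inta1b1}). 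When $n$ is odd the hypothesis $c_0\neq 0$ suffices, the analysis splitting into sub-cases along the lines of Theorem \ref{inta1b1}, with one sub-case handled by passing to the reversed automaton. One also has to check, as there, that the determining window of coordinates is itself an accessible block — by exhibiting it as a prefix (or suffix) of the $T_{A_2}$- or $T_{B_1}$-preimage of $b$ — so that it is a legitimate \emph{generator}.

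With these recursions in hand, the third step is essentially verbatim the monotonicity argument of Theorem \ref{intb1b2}: for $n\le j<k$ (resp.\ $n+1\le j<k$) the map sending a length-$k$ block of the intersection to the length-$j$ block of the intersection with the same generator is injective, so $|A_2(j)\cap B_1(j)|\ge |A_2(k)\cap B_1(k)|$; since these cardinalities are positive integers (the zero block lies in all of $A_1$, $A_2$, $B_1$, $B_2$) and nonincreasing, they are eventually constant. The slightly larger starting index $n+1$ for $A_1\cap B_2$ arises from the extra boundary zero built into the definition of the relevant transformations, exactly as the index $n+1$ appeared for $A_2\cap B_2$ in Theorem \ref{inta1b1}.

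The main obstacle is the second step: choosing the transformations $S$ so that the two sets of row-vanishing identities hold simultaneously, and then pushing the coefficient case analysis far enough to pin down exactly which coordinates of the window are free and which are forced — in particular confirming the precise length of the window, the accessibility of its entries, and therefore the correct off-by-one in the two starting indices. The underlying algebra is routine once the right matrices are written down, but identifying those matrices and organizing the cases (including the reversal reduction when $n$ is odd) is the delicate part.
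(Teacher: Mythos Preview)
Your proposal is essentially correct and follows the paper's approach. Two minor points where you over-anticipate the work. First, no new transformation is needed: the paper simply reuses $S_1$ for $A_2\cap B_1$ and $S_2$ for $A_1\cap B_2$, since the $B_i$-side row-vanishing identity is identical to the one already established, and on the $A_i$ side the vanishing is automatic because the relevant rows of $S_1$ (resp.\ $S_2$) are themselves zero. Second, the case structure is inverted relative to Theorem~\ref{inta1b1}: here the odd-$n$ case is the \emph{easy} one---with $b\in A_2$ the even-indexed entries vanish, and the first recursion reads $c_{n-1}b_1+c_{n-3}b_3+\cdots+c_0b_n=0$, which solves directly for $b_n$ since $c_0\neq 0$---so no sub-cases or reversal are required. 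It is the even-$n$ case that needs the extra hypothesis, exactly as you diagnosed; your observation that reversal preserves coefficient parity when $n$ is even, and hence cannot manufacture a nonzero odd interior coefficient, is precisely why the stronger hypothesis is imposed here.
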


\begin{proof}
We first consider $A_2\cap B_1$. We note that if $b=b_0\cdots b_{2k-1}\in A_2\cap B_1$, then $b\in\ker S_1$. Suppose that $n$ is even. Then \[b_0=b_2=\cdots =b_n=\cdots =b_{2k-2}=0.\] Suppose that $j$ satisfies $0<j<n$, $2\!\!\!\not| j$, $c_j\neq 0$, and is minimal. In analogy with the proof of Theorem \ref{inta1b1}, the recursions become
\begin{align*}
&c_{n-1}b_1 + c_{n-3}b_3 + \ldots + c_jb_{n-j} = 0,\\
&c_{n-1}b_3 + c_{n-3}b_5 + \ldots + c_jb_{n-j+2} = 0,\\
&...\\
&c_{n-1}b_{2k-n-1}+c_{n-3}b_{2k-n+1} + \ldots + c_jb_{2k-j-2}=0.
\end{align*} It follows that $b$ is determined by $b_1b_3\cdots b_{n-j-2}$ and $b_{2k-j+2}\cdots b_{2k-1}$; the result follows as above.

Now suppose that $n$ is odd. Then \[b_0 = b_2 = \cdots = b_{n+1} = \cdots = b_{2k-2} = 0,\] and $b$ is determined by $b_1b_3\cdots b_{n-2}$ as in Theorem \ref{intb1b2}.

The argument for $A_1\cap B_2$ is similar; in this case we use the map $S_2$ instead of $S_1$.
\end{proof}

We combine the results of sections \ref{injectivity} and \ref{intersections} with the general recursion presented at the end of section \ref{recursion_formulas} to obtain a recursion for even-length blocks; a recursion for odd-length blocks follows similarly. We summarize these results in the following theorem:

\begin{theorem}\label{main}
Suppose T is a polynomial of degree $n\geq 1$ that is not suspicious. Then for sufficiently large $k$, we have
\[a_T(2k) = 2a_T(k) + a_T(k+\left\lfloor\tfrac{n}{2}\right\rfloor) + a_T(k+\left\lfloor\tfrac{n+1}{2}\right\rfloor) + C_\cap\] and
\[a_T(2k+1) = a_T(k) + a_T(k+1) + a_T(k+\left\lfloor\tfrac{n+1}{2}\right\rfloor) + a_T(k + \left\lfloor\tfrac{n}{2}\right\rfloor + 1) + C_\cap,\] where $C_\cap$ is a constant dependent only on $T$.
\end{theorem}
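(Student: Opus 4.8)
The plan is to assemble the results of Sections \ref{injectivity} and \ref{intersections} into the inclusion--exclusion identity displayed at the end of Section \ref{recursion_formulas}, together with its odd-length analogue. Since $T$ is not suspicious, the corollary characterizing nonsuspicious polynomials gives $c_0\neq 0$ and $(o,e)=1$; hence parts i--iv of the injectivity theorem show that $T_{B_1}$ and $T_{B_2}$ are injective for every $k\geq\lfloor n/2\rfloor$, and the proposition of Section \ref{injectivity} shows that $T_{B_1}'$ and $T_{B_2}'$ are injective for $k\geq\lfloor n/2\rfloor+1$. The maps $T_{B_i}$ are surjective onto $B_i$ by the very definition of $B_i$, and the maps $T_{A_i}$ and $T_{A_i}'$ are bijective onto the $A_i$ (as noted in Section \ref{recursion_formulas}), so for large $k$ all four sets have the expected sizes. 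In the even-length case this reads $|A_1|=|A_2|=a_T(k)$, $|B_1|=a_T(k+\lfloor n/2\rfloor)$, and $|B_2|=a_T(k+\lfloor(n+1)/2\rfloor)$; in the odd-length case it reads $|A_2|=a_T(k)$, $|A_1|=a_T(k+1)$, $|B_1|=a_T(k+\lfloor(n+1)/2\rfloor)$, and $|B_2|=a_T(k+\lfloor n/2\rfloor+1)$, where in both cases one checks the two parities of $n$ separately against the domains of the $T_{B_i}$ and $T_{A_i}$ recorded in Section \ref{recursion_formulas}. Substituting these cardinalities into the inclusion--exclusion formula produces precisely the ``main terms'' $|A_1|+|A_2|+|B_1|+|B_2|$ appearing in the theorem (in the even-length case $|A_1|+|A_2|=2a_T(k)$).

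It remains to show that the signed sum of intersection cardinalities in the inclusion--exclusion formula stabilizes to a constant depending only on $T$. First I would verify that nonsuspiciousness supplies the side hypotheses of the intersection theorems: $c_0\neq 0$ is built into nonsuspiciousness, and if $n$ is even then $o$ cannot be the zero polynomial --- otherwise $(o,e)=(0,e)=e$ would be nonconstant of degree $n/2\geq 1$, contradicting $(o,e)=1$ --- so there is an odd $j$ with $0<j<n$ and $c_j\neq 0$, which is exactly the extra hypothesis needed for the $|A_2\cap B_2|$ statement of Theorem \ref{inta1b1} and for the theorem on $|A_2\cap B_1|$ and $|A_1\cap B_2|$. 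Now the proposition of Section \ref{intersections} gives $|A_1\cap A_2|=|A_1\cap A_2\cap B_1|=|A_1\cap A_2\cap B_2|=|A_1\cap A_2\cap B_1\cap B_2|=1$ (so these four terms contribute $-1+1+1-1=0$); Theorem \ref{intb1b2} and its corollary show that $|B_1\cap B_2|$, $|A_1\cap B_1\cap B_2|$, and $|A_2\cap B_1\cap B_2|$ are eventually constant; Theorem \ref{inta1b1} shows $|A_1\cap B_1|$ and $|A_2\cap B_2|$ are eventually constant; and the theorem on $|A_2\cap B_1|$ and $|A_1\cap B_2|$ handles those two. Taking $k$ large enough that every one of these sequences has reached its limiting value and that the injectivity thresholds above are met, all terms other than the main terms are constant, so their alternating sum is a constant $C_\cap$ depending only on $T$; this establishes the $a_T(2k)$ recursion. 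The $a_T(2k+1)$ recursion follows identically from the odd-length form of the inclusion--exclusion identity and the odd-length versions of the intersection theorems, whose proofs are indicated in Section \ref{intersections} to run in parallel with the even-length ones. (We write $C_\cap$ for the constant in each recursion; the two constants need not be literally equal, but each depends only on $T$.)

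This argument is mostly bookkeeping, since the substantive work lies in Sections \ref{injectivity} and \ref{intersections}. The points needing genuine care are the deduction just given that nonsuspiciousness forces the interior-coefficient hypotheses when $n$ is even, the parity case analysis that matches $|B_1|$ and $|B_2|$ to the floor-expressions $k+\lfloor n/2\rfloor$ and $k+\lfloor(n+1)/2\rfloor$ (and the $A_i$ to $a_T(k)$ and $a_T(k+1)$ in the odd-length case), and writing out the odd-length inclusion--exclusion identity together with the odd-length intersection estimates in enough detail to justify the phrase ``follows similarly.'' I expect this last item --- reconstructing the four-set inclusion--exclusion for odd length and re-running the monotonicity arguments of Theorems \ref{intb1b2} and \ref{inta1b1} and of the theorem on $|A_2\cap B_1|$, $|A_1\cap B_2|$ in that setting --- to be the most laborious part, although it is conceptually routine.
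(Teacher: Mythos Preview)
Your proposal is correct and follows exactly the approach the paper takes: the paper's own ``proof'' is the single sentence preceding the theorem, stating that one combines the results of Sections~\ref{injectivity} and~\ref{intersections} with the inclusion--exclusion identity of Section~\ref{recursion_formulas}, and that the odd-length case follows similarly. Your sketch is in fact more thorough than the paper's treatment --- in particular, your verification that nonsuspiciousness with $n$ even forces a nonzero odd interior coefficient (needed for Theorem~\ref{inta1b1} and the $A_2\cap B_1$, $A_1\cap B_2$ theorem) is only asserted without argument in the paper's remark after Theorem~\ref{inta1b1}.
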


\section{Powers of the Transition Rule}

So far, our analysis has been based heavily upon the injectivity of the maps $T_{B_i}$. It turns out, however, that some suspicious polynomials obey recursions similar to those described above. To examine this phenomenon more closely, we introduce the \emph{order} of a recursion.

Suppose $p$ is prime. We say that the automaton $A_p(I;T)$ \emph{satisfies a recursion of order n} if there exist a constant $C$ and integers $n, K$ such that
\[a_T(k) = \Sum_{j=0}^{p-1}\Sum_{r=0}^{p-1}a_T\left(\left\lfloor\frac{k+jn+r}{p}\right\rfloor\right)+C\] for all $k\geq K$.

In the modulo 2 case we considered above, the degree of the polynomial and the order of the recursion were the same; in general this need not be the case.

In this section we will show that if the automaton $A_p(c;T)$ (with a constant initial state) satisfies a recursion of order $r$, then $A_p(c; T^n)$ satisfies a recursion of order $rp^s$, where $s$ is the largest integer such that $p^s \mid n$.

We will say that an automaton is \emph{trivial} if its transition rule $T$ has at most one nonzero coefficient. Such rules can only translate the initial state or multiply it by a constant. The following proposition shows that non-trivial automata use the entire alphabet available to them.

\begin{prop}
Suppose $p$ is prime. If the automaton $A_p(I; T)$ is non-trivial, then all the symbols $0, 1, \ldots, p - 1$ are accessible; that is, $a_T(1) = p$.
\label{nontriv}
\end{prop}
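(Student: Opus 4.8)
The plan is to show that a non-trivial transition rule forces every residue class modulo $p$ to appear as a coefficient somewhere in the automaton, i.e. $\algA(1) = \{0,1,\dots,p-1\}$, so that $a_T(1) = p$. The key observation is that the automaton is generated by iterated multiplication by $T$, and multiplication by $T$ (or by a power of $T$) acts on the coefficients by a combination of shifting and scaling once we track a single designated ``leading'' or ``trailing'' coefficient. First I would fix notation: write $T(x) = c_d x^d + \dots + c_e x^e$ with $c_d, c_e \neq 0$ and $d > e$ (this is possible precisely because $T$ is non-trivial), and consider the successive configurations $\omega_m = T^m I$. Since $0$ is trivially accessible (the automaton has only finitely many nonzero coefficients in each row, so some coordinate is $0$ for $k \geq 1$; more carefully, $0$ appears as soon as the support has a gap or as soon as we pass a boundary), the content of the proposition is that every \emph{nonzero} residue is also accessible.

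The main step is a ``reachability'' argument for nonzero symbols. I would track the trailing coefficient of $\omega_m$: if $\omega_m$ has lowest-degree term $v_m x^{\ell_m}$ with $v_m \neq 0$, then multiplying by $T$ gives $\omega_{m+1}$ with lowest-degree term $(c_e v_m) x^{\ell_m + e}$, so $v_{m+1} = c_e v_m$. Hence $v_m = c_e^{m} v_0$, and as $m$ ranges over $\{0, 1, \dots, p-2\}$ (noting $c_e \in (\Z/p)^\times$ has some multiplicative order $t \mid p-1$), the values $v_m$ run through the coset $v_0 \langle c_e \rangle$ of the subgroup generated by $c_e$. If $c_e$ is a primitive root, this already gives all nonzero residues and we are done. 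In general it only gives one coset, so I would need a second idea: bring in the \emph{leading} coefficient, whose multiplier is $c_d$, or — more robustly — use that interior coefficients of $\omega_m$ are $\Z$-linear (mod $p$) combinations of the $c_i$ with binomial-type coefficients, and argue that as $m$ grows these combinations are not confined to a single multiplicative coset unless $T$ is a monomial. Concretely, the coefficient of $x^{\ell_m + e + 1}$ in $\omega_{m+1}$ equals $c_e v_{m,1} + c_{e+1} v_m$ where $v_{m,1}$ is the next coefficient of $\omega_m$; expanding this recursion shows the ``second coefficient'' picks up an additive term proportional to $m c_e^{m-1} c_{e+1} v_0$, and since we can choose $m$ freely, varying $m$ modulo $p$ lets this quantity take $p$ distinct values for fixed coset representative — so \emph{some} coefficient in \emph{some} row is any prescribed residue.

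The cleanest route, which I would actually carry out, is to reduce to the case $e = 0$ and $v_0 = 1$ by factoring out the monomial $x^e$ from $T$ and absorbing constants into $I$ (this changes neither non-triviality nor the set of accessible symbols up to multiplication by a unit, and one can check the multiplication-by-a-unit issue separately since a unit multiple of an initial state eventually produces, by the trailing-coefficient computation above, all unit multiples of everything in its own orbit). With $e=0$ and $T(x) = 1 + c_1 x + \dots + c_d x^d$, the configuration $\omega_m = T^m$ has constant term $1$ and linear coefficient $m c_1 \pmod p$; choosing $m \equiv a c_1^{-1} \pmod p$ (if $c_1 \neq 0$) shows $a$ is accessible for every $a$, giving $a_T(1) = p$. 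If $c_1 = 0$, replace the linear coefficient by the first nonzero $c_j$ and compute the coefficient of $x^j$ in $T^m$, which is again $m c_j$ plus terms divisible by higher powers of the ``small'' part — so $m c_j$ runs over all residues. \textbf{The main obstacle} is handling the case where $c_e$ (equivalently the relevant multiplier) is not a primitive root modulo $p$: the naive trailing-coefficient orbit misses most residues, and one must genuinely use the additive structure coming from having at least two nonzero coefficients — this is exactly where non-triviality is indispensable and where the binomial-coefficient bookkeeping must be done with a little care (in particular checking that the relevant binomial coefficient $\binom{m}{1} = m$ is not identically zero mod $p$, which is clear, and more generally that the first ``new'' contribution is a nonzero multiple of $m$).
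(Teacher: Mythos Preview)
Your overall strategy matches the paper's: reduce to $T$ having nonzero constant term $c_0$, then track the coefficient of $x^d$ in $T^m$ where $d$ is the smallest positive index with $c_d\neq 0$. Both you and the paper compute this coefficient as $m\,c_0^{m-1}c_d$ and argue it ranges over all of $\Z/p$. Your pre-``cleanest route'' discussion already contains the right idea: fix $m$ modulo the multiplicative order of $c_0$ so that $c_0^{m-1}$ becomes a fixed unit, and vary $m$ modulo $p$ independently (possible since $\gcd(p,p-1)=1$). The paper phrases this as taking $r = k(p-1)+1$, so that $c_0^{\,r-1}=1$ by Fermat's little theorem, whence the coefficient is $(1+k(p-1))\,c_d$, and this sweeps out $\Z/p$ as $k$ varies because $p-1$ and $c_d$ are both units.

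The gap is in your ``cleanest route,'' which you say you would actually carry out. You write ``With $e=0$ and $T(x)=1+c_1x+\dots$,'' but your stated reductions (factoring $x^e$ out of $T$; rescaling $I$ so that $v_0=1$) only force $c_0\neq 0$, not $c_0=1$. Replacing $T$ by $T/c_0$ is \emph{not} innocuous: it scales row $m$ by $c_0^{-m}$, an $m$-dependent unit, so the accessible-symbol set of $A_p(I;T/c_0)$ is not a single unit translate of that of $A_p(I;T)$, and your parenthetical about ``multiplication-by-a-unit'' does not repair this. Hence your claim that the $x^j$-coefficient of $\omega_m$ is $m\,c_j$ is only valid when $c_0=1$; in general it is $m\,c_0^{m-1}c_j$, and showing this surjects onto $\Z/p$ is precisely the step you tried to sidestep. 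The fix is the one you already sketched informally (and the one the paper uses): drop the reduction to $c_0=1$, restrict $m$ to a fixed residue class modulo $p-1$ to freeze $c_0^{m-1}$, and then let $m$ vary modulo $p$.
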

\begin{proof}
Write $T(x) = a_0 + a_1x + \ldots + a_nx^n$. Since $a_{xT(x)}(k) = a_{T(x)}(k)$ for all $k$, we may assume that $a_0 \neq 0$. We also suppose (since the automaton is nontrivial) that $a_d \neq 0$ for some $d > 0$; we choose $d$ so as to be minimal. Then the coefficient of $x^d$ in the expansion of $T^r$ is $ra_0^{r-1}a_d$. Since $p$ is prime and $a_0 \neq 0$, we have $a_0^{k(p-1)}\equiv 1\pmod p$ (note that the multiplicative group $(\Z/p)^\times$ of nonzero integers modulo $p$ is cyclic, so $a_0$ has a finite order which divides $p-1$). Thus, taking $r = k(p-1) + 1$, we see that $ra_0^{r-1}a_d = (1+k(p-1))a_0^{k(p-1)}a_d = (1+k(p-1))a_d$. Since $(\Z/p)^+$ (the \emph{additive} group of integers modulo $p$) is of prime order, it is cyclic and is generated by every nonzero element. Since $a_d, p - 1 \neq 0$, we see that $\{(1+k(p-1))a_d:k\geq 0\} = (\Z/p)^+$, so that the coefficient of $x_d$ assumes all values in $\Z/p$. This completes the proof.
\end{proof}

\begin{prop}
Suppose $p$ is prime, $c,d\in\Z/p$, and the automaton $A_p(d, T)$ is non-trivial. Then if $b\in\algA(k)$, we have $c\cdot b\in\algA(k)$.
\label{orbit}
\end{prop}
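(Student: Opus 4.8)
The plan is to show that multiplication by a constant $c\in\Z/p$ maps accessible blocks to accessible blocks, by producing, for a given accessible block $b$ occurring in $A_p(d;T)$, another row of the \emph{same} automaton in which $c\cdot b$ occurs. The natural strategy is to find, somewhere in the automaton, a row which begins (or more generally contains a window) equal to $c$ times the row containing $b$. Since row $r$ of the automaton is $d\cdot T^r$, and row $r'$ is $d\cdot T^{r'}$, it would suffice to find exponents $r' > r$ with $T^{r'} \equiv c\, T^{r} \pmod p$ over the relevant window, i.e. $T^{r'-r}$ behaving like the constant $c$ on the stretch of coefficients where $b$ sits.

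First I would reduce to the case $d\neq 0$ and, as in Proposition \ref{nontriv}, to the case $a_0\neq 0$ (multiplying the rule by a power of $x$ only translates the automaton and changes nothing about accessibility). If $c=0$ the claim is trivial since $0^k$ is always accessible, so assume $c\neq 0$. Next I would use the fact that the constant term of $T^m$ is $a_0^m$, and that $a_0$ has finite multiplicative order $\delta\mid p-1$ in $(\Z/p)^\times$; more importantly, I would look at the behavior of $T^m$ for $m$ ranging over an arithmetic progression. The key observation is that for $m$ a sufficiently large power of $p$, say $m=p^t$, we have $T^{p^t}(x) \equiv T(x^{p^t})^{\,\square}$-type behavior — more precisely $T(x)^{p^t}\equiv T(x^{p^t})\pmod p$ by the Frobenius identity, so the coefficients of $T^{p^t}$ are exactly those of $T$, spaced out by $p^t$. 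Then $T^{p^t + r} \equiv T(x^{p^t})\cdot T^r(x)$, and on any window of length $k$ that avoids the "spread-out" nonmonomial terms of $T(x^{p^t})$ (which is possible once $p^t$ is much larger than $k$ plus the degree of $T^r$), this product agrees with $a_0\cdot T^r(x)$ on that window.

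Iterating, $T^{a_0\text{-progression}}$: combining $m = p^{t_1} + p^{t_2} + \cdots$ for widely separated $t_i$, on a fixed window the row $d\cdot T^{\,r + p^{t_1}+\cdots+p^{t_j}}$ agrees with $d\, a_0^{\,j}\, T^r$ on that window. Since $a_0$ generates a subgroup of order $\delta$ in $(\Z/p)^\times$, this already shows $a_0^j\cdot b$ is accessible for all $j$. To get an \emph{arbitrary} constant $c$, not just powers of $a_0$, I would invoke non-triviality exactly as in Proposition \ref{nontriv}: there is a coefficient position that takes all values in $\Z/p$ as the exponent varies, so by choosing the exponent appropriately one can manufacture a row whose leading coefficient (or the coefficient at a chosen position, after translation) is any desired nonzero element $e$, and then $e^{-1}c$ is handled by the $a_0$-power argument, or one directly arranges the product so that the window is scaled by $c$. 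The cleanest packaging is: by Proposition \ref{nontriv} the automaton contains, arbitrarily far down, rows whose restriction to a long window is $(\text{unit})\cdot T^r$ for a prescribed unit; pick the unit to be $c\cdot(\text{leading coeff of } d\,T^r)^{-1}\cdot(\text{leading coeff of }d\,T^r)$, i.e. arrange the scalar to be $c$ outright.

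The main obstacle is the bookkeeping in the second paragraph: ensuring that the window containing $b$ really does sit inside a region where $T(x^{p^t})\cdot T^r(x)$ looks like $a_0\cdot T^r(x)$ — one must check that the nonconstant terms of $T(x^{p^t})$ (located at multiples of $p^t$) do not overlap the support of the window once $p^t$ exceeds $k$ plus the relevant degrees, and that combining several such widely-spaced powers of $p$ keeps all the "interference" terms outside the window. This is a routine but careful estimate on supports; once it is in place, the Frobenius identity and the surjectivity statement of Proposition \ref{nontriv} do the rest. An alternative, possibly slicker, route avoids Frobenius entirely: show directly that $\{\,$leading window of $d\,T^r$ as $r$ varies over a suitable progression$\,\}$ is closed under multiplication by a generator of $(\Z/p)^\times$ and contains some nonzero scalar multiple of $b$, using only Proposition \ref{nontriv} applied after translating so that the window starts at the first nonzero coefficient; I would present whichever of these is shorter, but I expect the Frobenius-based argument to be the most transparent.
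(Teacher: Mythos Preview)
Your two main ingredients---the Frobenius identity $T^{p^t}(x)\equiv T(x^{p^t})$ and Proposition~\ref{nontriv}---are exactly the ones the paper uses, but you have them in the wrong order, and that is why your final step (``to get an arbitrary constant $c$, not just powers of $a_0$'') never quite closes. Your route gives $a_0^j\cdot b$ accessible for all $j$; if $a_0$ fails to generate $(\Z/p)^\times$ (e.g.\ $a_0=1$) this buys nothing, and the patch you sketch via Proposition~\ref{nontriv} is genuinely vague---knowing that some coefficient position attains every value does not by itself let you \emph{scale a fixed block} $b$ by that value.

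The paper's argument reverses the order and is much shorter. First invoke Proposition~\ref{nontriv} once: since all symbols are accessible, the single symbol $c\cdot d$ appears in some row $r$ (recall the initial state is the constant $d$). Now apply Frobenius to row $r$ itself: row $p^jr$ is $d\cdot(T^r)^{p^j}=d\cdot T^r(x^{p^j})$, i.e.\ the coefficients of row $r$ spread out with gaps of $p^j-1$ zeros, so the symbol $c\cdot d$ sits there in isolation. Finally evolve $r'$ further steps: locally, near that isolated $c\cdot d$, the automaton reproduces the first $r'$ rows of $A_p(c\cdot d;T)$, which by linearity are exactly $c$ times the first $r'$ rows of $A_p(d;T)$. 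Taking $j$ large enough that the cone of influence does not reach the neighboring spread-out symbols (concretely $p^j>(\deg T)\,r'$), one finds $c\cdot b$ in row $p^jr+r'$. No iteration, no subgroup-of-$(\Z/p)^\times$ issue, and Proposition~\ref{nontriv} is used exactly once, to produce the single seed $c\cdot d$.
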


\emph{Note:} in particular, if we consider the operation of multiplication by a constant as an action of $\Z/p$ on $\algA(k)$, then this proposition implies that all orbits of blocks in $\algA(k)$ are contained in $\algA(k)$: we have \[(\Z/p)(\algA(k)) = \algA(k)\hspace{1cm}(k\geq 1).\]

\begin{proof}
Since $A_p(d; T)$ is non-trivial, Proposition \ref{nontriv} shows that $c\cdot d\in\algA(1)$. Suppose that $c\cdot d$ appears in line $r$. We first note that $T^p(s) \equiv T(s^p) \pmod p$ for any polynomial $s$, since $p$ is prime. Thus if $j\geq 1$, line $p^j r$ includes the block $0^j(c\cdot d)0^j$. The block $b$ appears in some line, say $r'$. If $j > (\deg T)r'$, then lines $0, 1, \ldots, r'$ of the automaton, multiplied by $c$, appear in the lines $p^j r, \ldots, p^j r + r'$. In particular, we can take $j = (\deg T)r' + 1$; then $c\cdot b$ appears in row $p^{(\deg T)r'+1}r+r'$. This completes the proof.
\end{proof}

In the following theorem, we show that taking the transition rule to powers relatively prime to the modulo does not change the collection of accessible blocks.

\begin{theorem}
Suppose that $p$ is prime, $(p, n) = 1$, $c \in \Z/p$, and suppose that $A_p(c;T)$ is non-trivial. Let $\algA_n(k)$ denote the set of accessible blocks of length $k$ associated to $A_p(c; T^n)$. Then for all $k$, we have \[\algA_1(k) = \algA_n(k),\] and in particular, \[a_T(k) = a_{T^n}(k).\]\label{relprime}\end{theorem}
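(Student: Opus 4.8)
The plan is to show the two sets of accessible blocks are equal by proving each is contained in the other, and the natural tool is the observation that some power of $T$ coincides, up to a zero-insertion, with a power of $T^n$ — together with Proposition \ref{orbit}, which lets us absorb any stray multiplicative constants. First I would recall the key congruence $T^p(x)\equiv T(x^p)\pmod p$, which says that raising the rule to the $p$-th power has the same effect on a configuration as inserting $p-1$ zeros between consecutive coefficients and then iterating once. Iterating, $T^{p^j}$ inserts $p^j-1$ zeros between coefficients of $T$; crucially, inserting zeros does not change which finite blocks occur (up to the padding zeros at the ends), so $A_p(c;T^{p^j})$ has, for every $k$, essentially the same accessible blocks as $A_p(c;T)$. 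Thus it suffices to handle the case where $n$ is coprime to $p$ after first peeling off the $p$-part; but here the whole hypothesis is $(p,n)=1$, so that reduction is already done.

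The heart of the matter is that when $(p,n)=1$, $n$ is a unit modulo the multiplicative order of things, and more to the point, we can find exponents $a,b$ with $na \equiv 1$ in a suitable sense so that $(T^n)^a$ recovers $T$ up to a power-of-$p$ twist. Concretely: choose a positive integer $a$ with $na = 1 + m p^s$ for some integers $m, s\geq 0$ — we can even arrange $na \equiv 1 \pmod{p^s}$ for any prescribed $s$ by taking $a$ to be the inverse of $n$ modulo $p^s$. Then $(T^n)^a = T \cdot T^{m p^s} = T \cdot \bigl(T^{p^s}\bigr)^m$. Row $r$ of $A_p(c;T^n)^a$-type reasoning then shows that any block appearing in $A_p(c;T)$ also appears in $A_p(c;T^n)$: given an accessible block $b$ of $A_p(c;T)$, say in row $r$, we take $s$ large (larger than $(\deg T) r$, as in the proof of Proposition \ref{orbit}), so that $(T^{p^s})^m$ applied to a configuration reproduces, far out along the row, a clean copy of rows $0,\dots,r$ of $A_p(c;T)$ scaled by the leading-coefficient contribution of $T^{p^s m}$, which is a nonzero constant; multiplying once more by $T$ keeps $b$ (suitably translated and scaled) inside a row of $A_p(c;T^n)$ — specifically row $ar'$ for an appropriate earlier row $r'$ where $c\cdot(\text{const})\cdot b$ occurs. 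By Proposition \ref{orbit}, the constant multiple is harmless: $\algA_n(k)$ is closed under multiplication by units, so $b$ itself lies in $\algA_n(k)$. This gives $\algA_1(k)\subseteq\algA_n(k)$.

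For the reverse inclusion, $\algA_n(k)\subseteq\algA_1(k)$, the argument is more direct: every row of $A_p(c;T^n)$ is literally a row of $A_p(c;T)$, namely row $nr$ of the latter equals row $r$ of the former, so every block occurring in $A_p(c;T^n)$ occurs in $A_p(c;T)$. Combining the two inclusions gives $\algA_1(k)=\algA_n(k)$ for all $k$, and taking cardinalities yields $a_T(k)=a_{T^n}(k)$.

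The main obstacle I anticipate is the bookkeeping in the forward inclusion: making precise, in terms of the matrix/polynomial picture, exactly which row of $A_p(c;T^n)$ contains the translated-and-scaled copy of $b$, and checking that the zero-insertion coming from the $p^s$-power genuinely produces an \emph{isolated} clean copy of the first $r$ rows of $A_p(c;T)$ (so that no interference from neighbouring coefficients corrupts the block $b$). This is exactly the phenomenon already controlled in the proof of Proposition \ref{orbit} by the choice $j>(\deg T)r'$, so I would mirror that estimate; the only new ingredient is tracking the extra factor of $T$ and the nonzero constant it contributes, and then invoking Proposition \ref{orbit} to discard that constant. Everything else is routine.
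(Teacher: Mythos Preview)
Your reverse inclusion $\algA_n(k)\subseteq\algA_1(k)$ matches the paper's. For the forward inclusion your route is genuinely different: the paper does not invert $n$ modulo a power of $p$. Instead it uses that $p$ has finite multiplicative order $m$ modulo $n$, chooses large exponents $s_i$ with $p^{s_i m}\equiv 1\pmod n$, and shows (via the same zero-insertion phenomenon you invoke) that a scaled copy of $b$ appears successively in rows $r+p^{s_1 m}$, $r+p^{s_1 m}+p^{s_2 m}$, and so on; each step increments the residue of the row index modulo $n$ by $1$, so after at most $n-1$ steps one lands on a multiple of $n$, and Proposition~\ref{orbit} removes the scaling. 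Your idea of taking $a\equiv n^{-1}\pmod{p^s}$ and jumping to the correct row in one shot is more direct and avoids the iteration.

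That said, the forward inclusion as you have written it has a gap. The identity $(T^n)^a=T\cdot T^{mp^s}$ by itself only describes row $a$ of $A_p(c;T^n)$, i.e.\ row $na=1+mp^s$ of $A_p(c;T)$; that row contains isolated scaled copies of row~$1$, not of row~$r$. What you actually need is row $ar$ of $A_p(c;T^n)$, which is row $nar=r+rmp^s$ of $A_p(c;T)$. This row equals $(c\cdot T^r)\cdot (T^{rm})(x^{p^s})$, and once $p^s>r\deg T$ the $p^s$-spacing separates the translates, so one of them is $t_0^{rm}\cdot(c\cdot T^r)$ (assuming, as the paper does without loss of generality, that $t_0\neq 0$), which contains $t_0^{rm}\cdot b$. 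Then Proposition~\ref{orbit} applied to $A_p(c;T^n)$ finishes exactly as you say. Your phrase ``row $ar'$ for an appropriate earlier row $r'$'' points toward the right target, but the surrounding sentences (``$(T^{p^s})^m$ applied to a configuration'', ``multiplying once more by $T$'') describe a single application of the factored rule rather than the $r$-fold one that is actually required, so the link from the factorization to the desired row is not made. With the computation above in place of those sentences, your argument is complete and arguably cleaner than the paper's.
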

\begin{proof}
Since $a_{xT(x)}(k) = a_{T(x)}(k)$, we will assume that $T$ has a nonzero constant coefficient. We first note that line $k$ of $A_p(c; T^n)$ is the same as line $kn$ of $A_p(c; T)$, since at each stage $A_p(c; T^n)$ applies the transition rule $n$ times. From this it clearly follows that $\algA_n(k) \subseteq \algA_1(k)$ for all $k$. To show that $\algA_n(k) \supseteq \algA_1(k)$, suppose that $b$ is a block of length $k$ in $A_p(c; T)$, appearing on some line $r$. We will show that $b$ appears on a line $L\equiv 0\pmod n$. This is clear if $r\equiv 0\pmod n$; we will thus assume that $r\not\equiv 0 \pmod n$.

As in the proof of Proposition \ref{orbit}, we have $T(s)^p \equiv T(s^p) \pmod p$ for any polynomial $s$. Thus if $j\geq 1$ and $T(x) = t_0 + t_1x + \ldots + t_nx^n$, line $1$ of $A_p(c; T)$ is given by $(c\cdot t_0)\cdots(c\cdot t_n)$, so that line $p^j$ is given by $(c\cdot t_0)0^j(c\cdot t_1)0^j\cdots 0^j(c\cdot t_n)$. Thus, as in the proof of Proposition \ref{orbit}, if $j>nr$, lines $0, \ldots, r$ of the automaton $A_p((c\cdot t_0); T)$ appear in lines $p^j, \ldots, p^j+r$ of $A_p(c; T)$.

Since $(p,n)=1$, we have $p^{\phi(n)}\equiv 1\pmod n$ by the Euler-Fermat theorem. In particular, $p$ is of finite order $m$. Thus, there exists $s_1$ such that $p^{s_1m}\equiv 1\pmod n$ and $s_1m>rn$, so that $t_0\cdot b$ appears in line $r + p^{s_1m}$ and $r+p^{s_1m} \equiv r+1\pmod n$. If $r+1\equiv 0\pmod n$, then we have $t_0\cdot b\in\algA_n(k)$.

Otherwise, we repeat the above reasoning with $r$ replaced by $r+p^{s_1m}$: we know that $t_0\cdot b$ appears in row $r+p^{s_1m}+p^j$ if $j>n(r+p^{s_1m})$. Thus there exists $s_2$ such that $p^{s_2m}\equiv 1\pmod n$ and $s_2m > n(r+p^{s_1m})$. It follows that $t_0\cdot b$ appears in row $r+p^{s_1m}+p^{s_2m}$, and $r+p^{s_1m}+p^{s_2m}\equiv r+2\pmod n$. If $r+2\equiv 0\pmod n$, then $t_0\cdot b\in\algA_n(k)$; otherwise, we proceed in this manner until $r+i\equiv 0\pmod n$ for some $i$ (Note that at most finitely many steps are necessary).

We have shown that $t_0\cdot b\in\algA_n(k)$. Since $\Z/p$ is a field, $t_0$ has an inverse. By applying Proposition \ref{orbit} to the automaton $A_p(c;T^n)$, we see that $t_0^{-1}\cdot(t_0\cdot b)\in\algA_n(k)$. 
We have shown that $\algA_1(k) = \algA_n(k)$. The conclusion follows.
\end{proof}

\begin{theorem}
Suppose $p$ is prime and $0\leq r<p$. Then we have \[a_{T^p}(pk+r) = (p-r)a_T(k)+ra_T(k+1)+1-p\] for all $k\geq 1$.
\end{theorem}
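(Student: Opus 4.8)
The plan is to exploit the key identity $T^p(s) \equiv T(s^p) \pmod p$, which tells us that line $\ell$ of the automaton $A_p(c; T^p)$ is the same as line $p\ell$ of $A_p(c; T)$, and that line $p\ell$ of $A_p(c;T)$ is obtained from line $\ell$ by inserting $p-1$ zeros between every pair of consecutive coefficients (the ``$p$-ary blow-up''). So I would first reduce the problem to a purely combinatorial count: the accessible blocks of length $pk+r$ in $A_p(c;T^p)$ are exactly the length-$(pk+r)$ windows into $p$-ary blow-ups of accessible blocks of $A_p(c;T)$. Concretely, if $\beta = \beta_0\beta_1\cdots$ is such a blow-up, then $\beta_i = 0$ unless $p \mid i$, and the nonzero-position subsequence $(\beta_0, \beta_p, \beta_{2p}, \dots)$ ranges exactly over accessible blocks of $A_p(c; T)$ (with appropriate zero-padding). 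I would want to note that $A_p(c;T^p)$ is non-trivial iff $A_p(c;T)$ is, and handle the trivial case separately (where both sides are easily checked), so that Proposition \ref{orbit} and $a_T(1) = p$ are available in the main case.

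Next I would carry out the window count. A length-$(pk+r)$ window of a blow-up $\beta$ is determined by (a) the starting offset $j \in \{0, 1, \dots, p-1\}$ modulo $p$ — i.e.\ which residue class the nonzero positions fall into within the window — and (b) which accessible block of $A_p(c;T)$ is being read off. For a given offset $j$, the window sees a certain number of ``active'' positions; one checks that a window of length $pk+r$ starting at offset $j$ contains exactly $k+1$ active positions when $j < r$ (wait, I should be careful here) — more precisely, the number of multiples of $p$ in an interval of length $pk+r$ depends on the offset, and takes the value $k+1$ for exactly $r$ of the $p$ offsets and the value $k$ for the remaining $p - r$ offsets (when $r > 0$; when $r = 0$ every offset gives $k$ active positions and there are effectively $p$ distinct offsets but offset $0$ versus nonzero offsets behave slightly differently at the boundary — this needs care). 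So the raw count of windows is $(p-r) a_T(k) + r\, a_T(k+1)$, but with overcounting: a window consisting entirely of zeros is produced by every offset and from the zero block in every case, and after accounting for the fact that different offsets reading (possibly) shifted blocks can collide only on the all-zero string (by Proposition \ref{orbit} and a support argument, two distinct offset/block pairs give distinct windows unless the window is all zeros), inclusion–exclusion on the $p$ copies of the all-zero window contributes the correction $1 - p$.

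The main obstacle I expect is the careful bookkeeping of the boundary behavior and the overcounting: making precise exactly when two offset/block pairs yield the same window, verifying that the only coincidences are at the all-zero block (this is where non-triviality, hence $a_T(1)=p$ and the orbit structure, must be invoked — a nonzero accessible block has a genuinely determined support pattern in the blow-up, so its shifts don't align), and getting the off-by-one count of active positions right for each of the $p$ offsets, including the edge cases $r = 0$ and the left/right ends of the window where a partial ``gap'' of zeros may or may not be visible. Once that combinatorial lemma is nailed down, the formula $a_{T^p}(pk+r) = (p-r)a_T(k) + r a_T(k+1) + 1 - p$ drops out by summing $a_T(k)$ over the $p - r$ short offsets, $a_T(k+1)$ over the $r$ long offsets, and subtracting $p - 1$ for the all-zero window counted $p$ times instead of once. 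The hypothesis $k \geq 1$ is exactly what guarantees the all-zero window is genuinely accessible (line $0$ contributes it) and that the active-position counts $k, k+1$ are both nonnegative and the argument is not vacuous.
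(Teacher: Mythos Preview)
Your proposal is correct and follows essentially the same approach as the paper: decompose $\algA_p(pk+r)$ into $p$ sets $A_1,\ldots,A_p$ indexed by the offset of the active positions in the blow-up, observe that $r$ of these are in bijection with $\algA_1(k+1)$ and $p-r$ with $\algA_1(k)$, and apply inclusion--exclusion using the fact that the pairwise intersections contain only the all-zero string. The paper does not invoke Proposition~\ref{orbit} or non-triviality for the intersection claim---it simply observes that the forced zero-patterns of two distinct offsets are incompatible except at $0^{pk+r}$---so your caution there, while harmless, is unnecessary.
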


\begin{proof}
First, suppose that $r\geq 1$. We will use the notation $\algA_n$ of Theorem \ref{relprime}. In view of the identity $T(s^p)\equiv T(s)^p\pmod p$, we see that the accessible coefficient blocks of length $pk+r$ must belong to one of the following sets:
\begin{alignat*}{2}
A_1 &= \{x_00^{p-1}x_10^{p-1}\cdots x_{k-1}0^{p-1}x_k0^{r-1}&&:x_0\cdots x_{k}\in\algA_1(k+1)\}\\
A_2 &= \{0x_00^{p-1}x_10^{p-1}\cdots x_{k-1}0^{p-1}x_k0^{r-2}&&:x_0\cdots x_{k}\in\algA_1(k+1)\}\\
\cdots&\\
A_r &= \{0^{r-1}x_00^{p-1}x_10^{p-1}\cdots x_{k-1}0^{p-1}x_k&&:x_0\cdots x_{k}\in\algA_1(k+1)\}\\
A_{r+1} &= \{0^rx_00^{p-1}x_10^{p-1}x_20^{p-1}\cdots x_{k-1}0^{p-1}&&:x_0\cdots x_{k-1}\in\algA_1(k)\}\\
\cdots&\\
A_p &= \{0^{p-1}x_00^{p-1}x_10^{p-1}x_20^{p-1}\cdots x_{k-1}0^r&&:x_0\cdots x_{k-1}\in\algA_1(k)\}
\end{alignat*}
Thus $\algA_p(pk+r) = A_1\cup\cdots\cup A_p$. Note that for $i \leq r$ the mappings $m_i: \algA_1(k+1)\to A_i$ defined by $m_i:x_0\cdots x_k\mapsto 0^{i-1}x_00^{p-1}x_10^{p-1}\cdots x_{k-1}0^{p-1}x_k0^{r-i}$ are bijective, and the same is true of the analogous mappings $m_i:\algA_1(k)\to A_i\hspace{0.1cm}(i>r)$. It follows that
\[|A_i| = \begin{cases}a_T(k+1) & i\leq r\\a_T(k) & i>r.\end{cases}\] Moreover, it is clear that all pairwise intersections of the sets $A_i$ contain only the string $0^{pk+r}$. The inclusion-exclusion principle thus gives
\begin{align*}
a_{T^p}(pk+r) = |\algA_p(pk+r)| &= |A_1| + \ldots + |A_p| + \Sum_{2\leq |J|\leq p}(-1)^{|J|-1}\binom{p}{|J|}\\
&= (p-r)a_T(k)+ra_T(k+1) + \Sum_{2\leq |J|\leq p}(-1)^{|J|-1}\binom{p}{|J|}\\
&= (p-r)a_T(k)+ra_T(k+1) - \Sum_{i=2}^p(-1)^i\binom{p}{i}\\
&= (p-r)a_T(k)+ra_T(k+1) + (1-p) - \Sum_{i=0}^p(-1)^i\binom{p}{i}\\
&= (p-r)a_T(k)+ra_T(k+1) + (1-p) - (1+(-1))^p\\
&= (p-r)a_T(k)+ra_T(k+1) + (1-p).
\end{align*}
The case $r=0$ follows by precisely analogous reasoning -- in particular, we can use the same sets $A_i$ as above, if we consider the symbol $0^{-1}$ as ``backspace;'' these sets then all have cardinality $a_T(k)$. This completes the proof.
\end{proof}

\begin{cor}
If $A_p(I;T)$ satisfies a recursion of order $n$, then $A_p(I; T^p)$ satisfies a recursion of order $pn$.
\label{recur}
\end{cor}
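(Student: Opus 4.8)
The plan is to combine the two theorems immediately preceding this corollary. Suppose $A_p(I;T)$ satisfies a recursion of order $n$, so that there exist a constant $C$ and an integer $K$ with
\[
a_T(k) = \sum_{j=0}^{p-1}\sum_{r=0}^{p-1} a_T\!\left(\left\lfloor\frac{k+jn+r}{p}\right\rfloor\right) + C
\]
for all $k \ge K$. I want to produce an analogous identity for $a_{T^p}$ with $n$ replaced by $pn$. The key tool is the previous theorem, which expresses $a_{T^p}(pk+s)$ in terms of $a_T(k)$ and $a_T(k+1)$ for $0 \le s < p$; more precisely $a_{T^p}(pk+s) = (p-s)a_T(k) + s\,a_T(k+1) + (1-p)$.

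First I would write an arbitrary large integer $m$ in the form $m = p\ell + s$ with $0 \le s < p$, and apply the previous theorem to get $a_{T^p}(m) = (p-s)a_T(\ell) + s\,a_T(\ell+1) + (1-p)$. Then I would substitute the order-$n$ recursion for $a_T$ into both $a_T(\ell)$ and $a_T(\ell+1)$ (valid once $\ell$, hence $m$, is large enough). This rewrites $a_{T^p}(m)$ as a $(p-s)$-weighted sum of $2p^2$ terms $a_T(\lfloor(\ell+jn+r)/p\rfloor)$ plus an $s$-weighted sum of $2p^2$ terms $a_T(\lfloor(\ell+1+jn+r)/p\rfloor)$, plus an updated constant. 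Next, each term $a_T(q)$ appearing here should be re-expressed as $a_{T^p}$ of something, inverting the previous theorem: since $a_{T^p}(pq) = p\,a_T(q) + (1-p)$, we have $a_T(q) = \frac{1}{p}\bigl(a_{T^p}(pq) + (p-1)\bigr)$. After this substitution and bookkeeping of the constant, $a_{T^p}(m)$ becomes a fixed integer linear combination of values $a_{T^p}$ at arguments of the form $p\lfloor(\ell + \varepsilon + jn + r)/p\rfloor$ with $\varepsilon \in \{0,1\}$, plus a constant depending only on $T$ and $p$.

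The main obstacle — really the crux of the argument — is the arithmetic bookkeeping needed to show that this combination can be rearranged into the canonical shape $\sum_{j'=0}^{p-1}\sum_{r'=0}^{p-1} a_{T^p}\!\bigl(\lfloor (m + j'(pn) + r')/p\rfloor\bigr)$. I would handle this by noting that $\lfloor (m + j'pn + r')/p \rfloor = \lfloor (p\ell + s + r')/p\rfloor + j'n = \ell + j'n + \lfloor (s+r')/p\rfloor$, so as $j'$ ranges over $\{0,\dots,p-1\}$ and $r'$ over $\{0,\dots,p-1\}$ the arguments $\lfloor(m+j'pn+r')/p\rfloor$ take the value $\ell + j'n$ exactly $p-s$ times (those $r'$ with $s+r' < p$) and $\ell + j'n + 1$ exactly $s$ times. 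Comparing with the expansion obtained above — where $p\lfloor(\ell+\varepsilon+jn+r)/p\rfloor$ contributes $a_{T^p}$ at an argument that, after dividing by $p$ inside the floor, is again of the form $\ell + jn$ or $\ell + jn + 1$ — one checks that the multiplicities and weights match term by term, with the leftover scalar absorbed into the constant $C'$. One should verify the bound $K' = pK$ (or something comparable) makes every invocation of the order-$n$ recursion legitimate, and confirm the edge behavior when $s = 0$, which is covered by the "backspace" remark in the previous theorem's proof. This establishes that $A_p(I;T^p)$ satisfies a recursion of order $pn$, as claimed.
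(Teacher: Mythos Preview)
Your overall strategy---combine the previous theorem with the order-$n$ recursion for $a_T$, then convert back to $a_{T^p}$---is the right one and matches the paper's approach. But your step~4 inversion is the wrong move and creates a gap you cannot close. Writing $a_T(q) = \tfrac{1}{p}\bigl(a_{T^p}(pq) + (p-1)\bigr)$ produces values of $a_{T^p}$ at arguments $p\lfloor(\ell+\varepsilon+jn+r)/p\rfloor$, which are \emph{multiples of $p$}. The target arguments $\lfloor(m+j'pn+r')/p\rfloor = \ell + j'n + \lfloor(s+r')/p\rfloor$ have no reason to be multiples of $p$, so the ``multiplicities and weights'' cannot match term by term as you claim; your last paragraph tries to equate these two incompatible families and the phrase ``after dividing by $p$ inside the floor'' has no clear meaning here. (Also, the coefficients $\tfrac{p-s}{p}$ and $\tfrac{s}{p}$ are not integers, so ``fixed integer linear combination'' is already false.)

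The fix---and this is exactly what the paper does---is to first recast the previous theorem in summation form,
\[
a_{T^p}(k) \;=\; \sum_{i=0}^{p-1} a_T\!\left(\left\lfloor\tfrac{k+i}{p}\right\rfloor\right) + (1-p),
\]
and use this identity in \emph{both} directions. Forward, it already absorbs your case split on $s$. After substituting the order-$n$ recursion for each $a_T\bigl(\lfloor(k+i)/p\rfloor\bigr)$ and pulling $jn$ inside the floor via $\lfloor(k+i)/p\rfloor + jn = \lfloor(k+jpn+i)/p\rfloor$, the inner $r$-sum is exactly $\sum_{r=0}^{p-1} a_T\bigl(\lfloor(q+r)/p\rfloor\bigr)$ with $q = \lfloor(k+jpn+i)/p\rfloor$, which by the same identity collapses to $a_{T^p}(q) - (1-p)$. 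This lands directly on the desired double sum $\sum_{j}\sum_{i} a_{T^p}\bigl(\lfloor(k+jpn+i)/p\rfloor\bigr)$ plus a constant, with no term-matching needed.
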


\begin{proof}
From the last theorem, we have \[a_{T^p}(k) = \Sum_{i=0}^{p-1}a_T\left(\left\lfloor\frac{k+i}{p}\right\rfloor\right)+1-p\hspace{0.4cm}\text{ for }k\geq p.\]

If $C$ and $K$ are as in the definition above, then for $k\geq p(K+1)$ we have $k\geq p$, $\lfloor\frac{k+i}{p}\rfloor\geq K$, so that
\begin{align*}
a_{T^p}(k) &= \Sum_{i=0}^{p-1}\Sum_{j=0}^{p-1}\Sum_{r=0}^{p-1}a_T\left(\left\lfloor\frac{\left\lfloor\frac{k+i}{p}\right\rfloor+jn+r}{p}\right\rfloor\right)+Cp+1-p\\
&= \Sum_{i=0}^{p-1}\Sum_{j=0}^{p-1}\Sum_{r=0}^{p-1}a_T\left(\left\lfloor\frac{\left\lfloor\frac{k+jpn+i}{p}\right\rfloor+r}{p}\right\rfloor\right)+Cp+1-p\\
&= \Sum_{i=0}^{p-1}\Sum_{j=0}^{p-1}\left\lbrack a_{T^p}\left(\left\lfloor\frac{k+jpn+i}{p}\right\rfloor\right)-(1-p)\right\rbrack+Cp+1-p\\
&= \Sum_{i=0}^{p-1}\Sum_{j=0}^{p-1} a_{T^p}\left(\left\lfloor\frac{k+jpn+i}{p}\right\rfloor\right)+Cp+(1-p)(1-p^2)
\end{align*}
so that $a_{T^p}$ satisfies a recursion of order $pn$. The conclusion follows.
\end{proof}

We can now combine the above results to give the following:

\begin{theorem}
Suppose $p$ is prime, $c\in\Z/p$, and $n\in\Z^+$. Let $s$ be the largest integer such that $p^s \mid n$. Then if $A_p(c;T)$ satisfies a recursion of order $r$, $A_p(c; T^n)$ satisfies a recursion of order $rp^s$.
\end{theorem}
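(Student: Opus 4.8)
The plan is to factor $n = p^{s} m$ with $(p, m) = 1$, decompose $T^{n} = (T^{p^{s}})^{m}$, and then assemble the two preceding results: Corollary \ref{recur} applied $s$ times to pass from $T$ to $T^{p^{s}}$, and Theorem \ref{relprime} applied once to pass from $T^{p^{s}}$ to $(T^{p^{s}})^{m} = T^{n}$.

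First I would dispose of the trivial case. If $T$ has at most one nonzero coefficient, then $T(x) = ax^{d}$, and $T^{n}(x) = a^{n}x^{dn}$ also has a single nonzero coefficient, so both $A_{p}(c;T)$ and $A_{p}(c;T^{n})$ are trivial. In a trivial automaton every line has exactly one nonzero entry, so $a_{T}(k)$ is bounded and hence eventually constant, say $a_{T}(k) = A$ for $k \geq K_{0}$. For any positive integer $N$, choosing $K \geq pK_{0}$ makes $\lfloor (k + jN + \rho)/p\rfloor \geq K_{0}$ for all $k \geq K$ and all $0 \leq j, \rho \leq p-1$, whence $\Sum_{j=0}^{p-1}\Sum_{\rho=0}^{p-1} a_{T}(\lfloor (k+jN+\rho)/p\rfloor) = p^{2}A$; taking $C = A(1-p^{2})$ shows $A_{p}(c;T)$ satisfies a recursion of order $N$ for every $N$, and similarly for $A_{p}(c;T^{n})$. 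So the statement holds trivially in this case, and from here on I assume $A_{p}(c;T)$ is non-trivial.

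Next I would iterate Corollary \ref{recur}. Replacing $T$ by $T^{p^{i}}$ in that corollary shows that if $A_{p}(c; T^{p^{i}})$ satisfies a recursion of order $q$, then $A_{p}(c; (T^{p^{i}})^{p}) = A_{p}(c; T^{p^{i+1}})$ satisfies a recursion of order $pq$. Starting from the hypothesis that $A_{p}(c;T)$ satisfies a recursion of order $r$ and inducting on $i$ from $0$ to $s$, we conclude that $A_{p}(c; T^{p^{s}})$ satisfies a recursion of order $rp^{s}$. Finally I would invoke Theorem \ref{relprime} with transition rule $T^{p^{s}}$ and exponent $m$. Since $T(x)^{p} = T(x^{p})$ over $\Z/p$, the Frobenius substitution $T^{p^{s}}(x) = T(x^{p^{s}})$ has exactly as many nonzero coefficients as $T$; in particular $A_{p}(c; T^{p^{s}})$ is non-trivial, so Theorem \ref{relprime} applies (using $(p,m) = 1$) and gives $a_{(T^{p^{s}})^{m}}(k) = a_{T^{p^{s}}}(k)$, i.e. $a_{T^{n}}(k) = a_{T^{p^{s}}}(k)$, for all $k$. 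Therefore $A_{p}(c; T^{n})$ satisfies the same recursion as $A_{p}(c; T^{p^{s}})$, namely one of order $rp^{s}$.

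The proof is essentially an assembly of the two earlier results, so I do not anticipate a genuine obstacle; the only points requiring a little care are the reduction to the non-trivial case (so that Theorem \ref{relprime} is applicable) and the observation that the Frobenius map preserves the number of nonzero coefficients, which is what guarantees $T^{p^{s}}$ is non-trivial exactly when $T$ is.
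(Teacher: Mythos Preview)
Your main argument---factor $n = p^{s} m$ with $(p,m)=1$, iterate Corollary~\ref{recur} $s$ times to pass from $T$ to $T^{p^{s}}$, then invoke Theorem~\ref{relprime} (applied to the rule $T^{p^{s}}$ with exponent $m$) to identify the line complexities of $T^{p^{s}}$ and $T^{n}$---is exactly the paper's approach. The paper's proof is your second and third paragraphs compressed into two sentences; it does not spell out the Frobenius observation or the non-triviality check, both of which you correctly supply.

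Your treatment of the trivial case, however, contains an error. When $T(x) = ax^{d}$ with $a, c \neq 0$, line $r$ of the automaton is the monomial $ca^{r}x^{rd}$, so the accessible blocks of length $k$ are $0^{k}$ together with $0^{i}v\,0^{k-1-i}$ for each nonzero value $v \in \{ca^{r}:r\geq 0\}$ and each $0 \leq i \leq k-1$. Hence $a_{T}(k)$ grows \emph{linearly} in $k$, not boundedly, and your claim that it is eventually constant is false. The repair is easy: a sequence $a_{T}(k) = A + Bk$ with $B>0$ cannot satisfy a recursion of the required form, since the identity $\sum_{\rho=0}^{p-1}\lfloor (q+\rho)/p\rfloor = q$ makes the right-hand side equal to $p^{2}A + B\bigl(pk + N\binom{p}{2}\bigr) + C$, whose $k$-coefficient is $pB \neq B$. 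Thus for trivial $T$ with $c \neq 0$ the hypothesis is vacuous; when $c = 0$ both automata have $a_T \equiv 1$ and satisfy recursions of every order. The paper simply does not address this case.
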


\begin{proof}
Since $s$ is maximal, we can write $n=p^sm$, where $(p,m)=1$. It follows that $a_{T^n}(k) = a_{T^{p^s}}(k)$ for all $k$, by Theorem \ref{relprime}, and repeated application of Corollary \ref{recur} shows that $A_p\left(c; T^{p^s}\right)$ satisfies a recursion of order $rp^s$. The conclusion follows.
\end{proof}

\section{Generating Functions}

In this section we will investigate generating functions for the sequences $a_T(k)$ in the modulo $2$ case. We will derive functional equations which will be useful in the next section.

Suppose $T\in (\Z/2)[x]$ is of degree $n$, and $a_T(k)$ satisfies the recursions in Theorem \ref{main}, for all $k\geq N$. Then for complex $z$ with $|z|<\frac{1}{2}$, we define $$f_T(z)=\Sum_{k=2N}^\infty{a_T(k)z^k}.$$ Note that $a_T(k)\leq 2^k$, so that the right-hand expression is defined.

\begin{theorem}
Let $0<|z|<\frac{1}{2}$. Then $$f_T(z)=P_T(z)+\frac{C_\cap z^{2N}}{1-z}+\frac{1}{z^{n+1}}(1+z^n)(1+z)^2f_T(z^2),$$ where $P_T(z)$ is a polynomial.

\end{theorem}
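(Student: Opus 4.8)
The plan is to substitute the two recursions of Theorem~\ref{main} directly into the series defining $f_T$, re-index, and recognize the resulting coefficient of $f_T(z^2)$ as the claimed rational function. Since $2N$ is even, I would first split $f_T$ by parity of the index:
\[ f_T(z) = \sum_{k=N}^\infty a_T(2k)\,z^{2k} + z\sum_{k=N}^\infty a_T(2k+1)\,z^{2k}, \]
where both inner ranges lie in the region $k\geq N$ on which the recursions are valid. Replacing $a_T(2k)$ and $a_T(2k+1)$ by their recursive expressions turns each part into a combination of sums $S_m := \sum_{k=N}^\infty a_T(k+m)\,z^{2k}$ with shift $m$ ranging over $\{0,1,\lfloor n/2\rfloor,\lfloor(n+1)/2\rfloor,\lfloor n/2\rfloor+1\}$, together with two geometric tails coming from the constant $C_\cap$.

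Next I would record that $S_m = z^{-2m}f_T(z^2) + (\text{polynomial})$: after the substitution $j = k+m$, the sum $S_m$ differs from $z^{-2m}f_T(z^2)$ in only finitely many terms, and after enlarging $N$ so that $N > n$ — which is permissible, since the recursions persist for all larger $k$ — those terms all have nonnegative exponent, so the correction is a genuine polynomial. The two constant tails are $\frac{C_\cap z^{2N}}{1-z^2}$ (from the even part) and $\frac{C_\cap z^{2N+1}}{1-z^2}$ (from the odd part), and they combine to $\frac{C_\cap z^{2N}(1+z)}{1-z^2} = \frac{C_\cap z^{2N}}{1-z}$.

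Collecting everything, $f_T(z)$ equals a polynomial, plus $\frac{C_\cap z^{2N}}{1-z}$, plus $g(z)\,f_T(z^2)$, where
\[ g(z) = 2 + z + z^{-1} + z^{-2\lfloor n/2\rfloor} + z^{-2\lfloor(n+1)/2\rfloor} + z^{1-2\lfloor(n+1)/2\rfloor} + z^{-1-2\lfloor n/2\rfloor}. \]
The crux of the argument is then to check that $g(z) = z^{-n-1}(1+z^n)(1+z)^2 = 2 + z + z^{-1} + z^{-n+1} + 2z^{-n} + z^{-n-1}$; after cancelling $2+z+z^{-1}$ this reduces to the identity $z^{-2a} + z^{-2b} + z^{1-2b} + z^{-1-2a} = z^{-n-1} + 2z^{-n} + z^{-n+1}$ with $a = \lfloor n/2\rfloor$ and $b = \lfloor(n+1)/2\rfloor$, which I would verify by treating $n$ even ($a = b = n/2$) and $n$ odd ($2a = n-1$, $2b = n+1$) separately. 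Finally, since $f_T(z^2)$, $\frac{C_\cap z^{2N}}{1-z}$, and $z^{-n-1}(1+z^n)(1+z)^2 f_T(z^2)$ (whose lowest-order term is $z^{4N-n-1}$ with $4N-n-1 > 0$) are all honest power series, the remaining Laurent-polynomial correction carries no negative powers and is therefore an honest polynomial $P_T(z)$. Convergence is never an issue, since $|z| < \tfrac12$ forces $|z^2| < \tfrac12$, so $f_T(z^2)$ is defined and every rearrangement above is of an absolutely convergent series. The only steps needing care are the parity case analysis in the identity for $g$ and the verification that the assembled correction term is a polynomial rather than merely a Laurent polynomial.
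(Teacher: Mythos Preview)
Your proposal is correct and follows essentially the same route as the paper: split $f_T$ by parity, insert the recursions of Theorem~\ref{main}, re-index each shifted sum to extract $f_T(z^2)$, and combine the geometric tails into $C_\cap z^{2N}/(1-z)$. The paper carries this out only for $n$ even and writes $P_T$ out explicitly, whereas you handle both parities at once via the floor functions and verify the factorization $g(z)=z^{-n-1}(1+z^n)(1+z)^2$ by a short case split; your closing argument that $P_T$ must be an honest polynomial (because the other three terms are genuine power series) is a clean way to avoid writing $P_T$ explicitly.
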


\begin{proof}
We will again write $a(k)$ for $a_T(k)$ in the proof, and we will assume that $n$ is even, for specificity (the case of odd $n$ is analogous). We have

$$f_T(z)=\Sum_{k=2N}^\infty{a(k)z^k}=\Sum_{k=N}^\infty{a(2k)z^{2k}}+z\Sum_{k=N}^\infty{a(2k+1)z^{2k}}.$$ Using the recursions for $a_T(k)$, we obtain \begin{align*}
f_T(z)=&\Sum_{k=N}^\infty{(2a(k)+2a(n/2+k))z^{2k}}\\
&+z\Sum_{k=N}^\infty{(a(k+1)+a(k)+a(n/2+k)+a(n/2+k+1))z^{2k}}\\
&+C_\cap(1+z)\Sum_{k=N}^\infty{z^{2k}}.
\end{align*}
Therefore, \begin{align*}
f_T(z)=&2\Sum_{k=N}^\infty{a(k)z^{2k}}+\frac{2}{z^n}\Sum_{k=N+n/2}^\infty{a(k)z^{2k}}+\frac{1}{z}\Sum_{k=N+1}^\infty{a(k)z^{2k}}\\
&+z\Sum_{k=N}^\infty{a(k)z^{2k}}+\frac{1}{z^{n-1}}\Sum_{k=N+n/2}^\infty{a(k)z^{2k}}+\frac{1}{z^{n+1}}\Sum_{k=N+n/2+1}^\infty{a(k)z^{2k}}\\
&+C_\cap(1+z)\left(\frac{1}{1-z^2}-\Sum_{k=0}^{N-1}{z^{2k}}\right).
\end{align*}
Collecting terms, we have \begin{align*}
f_T(z)=&2\Sum_{k=N}^{2N-1}{a(k)z^{2k}}+\frac{2}{z^n}\Sum_{k=N+n/2}^{2N-1}{a(k)z^{2k}}+\frac{1}{z}\Sum_{k=N+1}^{2N-1}{a(k)z^{2k}}\\
&+z\Sum_{k=N}^{2N-1}{a(k)z^{2k}}+\frac{1}{z^{n-1}}\Sum_{k=N+n/2}^{2N-1}{a(k)z^{2k}}+\frac{1}{z^{n+1}}\Sum_{k=N+n/2+1}^{2N-1}{a(k)z^{2k}}\\
&+\left(2+\frac{2}{z^n}+\frac{1}{z}+z+\frac{1}{z^{n-1}}+\frac{1}{z^{n+1}}\right)f_T(z^2)+\frac{C_\cap(1+z)}{1-z^2}-C_\cap\Sum_{k=0}^{2N-1}z^{k}\\
=&~P_T(z)+\frac{z^{2N}C_\cap}{1-z}+\frac{1}{z^{n+1}}(1+z^n)(1+z)^2f_T(z^2),
\end{align*}
where \begin{align*}
P_T(z)=&~2\Sum_{k=N}^{N+n/2-1}a_T(k)z^{2k}+\left(2+\frac{2}{z^n}\right)\Sum_{k=N+n/2}^{2N-1}a_T(k)z^{2k}\\
&+\left(z+\frac{1}{z}\right)\Sum_{k=N+1}^{2N-1}a_T(k)z^{2k}+\left(\frac{1}{z^{n-1}}+\frac{1}{z^{n+1}}\right)\Sum_{k=N+n/2+1}^{2N-1}a_T(k)z^{2k}\\
&+(a_T(N)+a_T(N+n/2))z^{2N+1}.
\end{align*}
\end{proof}

\section{Asymptotic Behavior of $a_T(k)/k^2$}
For the discussion of asymptotic behavior, we will work in a much more general framework. We will consider a generating function $\phi$ which is assumed to satisfy a general functional equation. The main result we derive in this section will include the case of $a_T(k)$.

We shall require the following facts about power series.

(a) If $R$ is the radius of convergence of the power series, then in $|z|<R$ the sum of the series is analytic and its derivative has the same radius of convergence (see \cite{Ahlfors}, Ch.~2, \S 2.4, Theorem 2(iii)).

(b) If $f$ has a power series development in a disk, then the coefficients are uniquely determined (see \cite{Ahlfors}, p. 40).

Let $p$ be prime and let $D=\{z\in\mathbb C:|z|<1/p\}$. For $|z|<1$, let $\frac{1}{\lambda(z)}=\sum_{k=0}^\infty\gamma(k)z^k$, where $\gamma(0)=1$ and $\gamma(k)=Ck^2+f(k)$, where $C>0$ is constant and $\lim_{k\to\infty}\frac{f(k)\log_p k}{k^2}=0$. Let $\phi:D\to\mathbb C$ be a function given by the power series expression $$\phi(z)=\Sum_{k=1}^\infty{\alpha_kz^k},$$ where $\alpha_k\leq p^k$, and assume that $\phi$ satisfies $$\lambda(z)\phi(z)=R(z)+\lambda(z^p)\phi(z^p),$$ where $R:\mathbb C\to\mathbb C$ is a polynomial with $R(1)=0$. Note that $R(0)=0$.
\begin{prop}
We have $$\phi(z)=\frac{1}{\lambda(z)}\Sum_{k=0}^\infty{R\left(z^{p^k}\right)}.$$
\end{prop}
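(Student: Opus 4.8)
The plan is to iterate the functional equation and to show that the remainder term produced by the iteration tends to $0$. First I would record a few analyticity facts. Write $\Lambda(z)=\tfrac{1}{\lambda(z)}=\sum_{k=0}^\infty\gamma(k)z^k$. Since $\gamma(k)=Ck^2+f(k)$ with $C>0$ and (a fortiori) $f(k)/k^2\to 0$, we have $|\gamma(k)|^{1/k}\to 1$, so $\Lambda$ is holomorphic on $|z|<1$ with $\Lambda(0)=\gamma(0)=1$; consequently $\lambda=1/\Lambda$ is holomorphic and nonvanishing on some disk $U=\{|z|<\rho\}$ about $0$, with $\lambda(0)=1$, and we may take $\rho\le 1/p$ so that $U\subseteq D$ and $z\in U$ implies $z^p\in U$. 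The hypothesis $\alpha_k\le p^k$ makes $\phi$ holomorphic on $D$ with $\phi(0)=0$, and then evaluating the functional equation at $z=0$ forces $R(0)=0$ (the fact recorded after the statement); this will be essential below.

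Next, for $z\in U$ I would iterate. Putting $\psi(z)=\lambda(z)\phi(z)$, the functional equation reads $\psi(z)=R(z)+\psi(z^p)$, and applying it $N$ times gives
\[
\psi(z)=\sum_{k=0}^{N-1}R\!\left(z^{p^k}\right)+\psi\!\left(z^{p^N}\right),\qquad z\in U,\ N\ge 1.
\]
Now let $N\to\infty$. Since $|z|<1$ we have $z^{p^N}\to 0$, and $\psi$ is continuous at $0$ with $\psi(0)=\lambda(0)\phi(0)=0$, so the remainder $\psi(z^{p^N})\to 0$. For the series, $R(0)=0$ lets us write $R(w)=w\,\widetilde R(w)$ with $\widetilde R$ a polynomial; bounding $|\widetilde R(w)|\le M$ for $|w|\le 1$ gives $|R(z^{p^k})|\le M|z|^{p^k}$, which is summable, in fact locally uniformly on $|z|<1$ (Weierstrass $M$-test), so $\sum_{k\ge 0}R(z^{p^k})$ defines a holomorphic function on $|z|<1$. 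Hence $\psi(z)=\sum_{k=0}^\infty R(z^{p^k})$ on $U$, and multiplying by $\Lambda(z)$ yields
\[
\phi(z)=\Lambda(z)\sum_{k=0}^\infty R\!\left(z^{p^k}\right),\qquad z\in U.
\]

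Finally I would upgrade this from $U$ to all of $D$. Both sides of the last identity are holomorphic on $D$ — the right-hand side is even holomorphic on the larger disk $|z|<1$ — so each has a power series expansion about $0$; since the two functions agree on $U$, fact (b) (uniqueness of power series coefficients) shows the expansions coincide, and therefore the functions agree throughout $D$. This is precisely the claimed formula $\phi(z)=\tfrac{1}{\lambda(z)}\sum_{k=0}^\infty R(z^{p^k})$.

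The only genuinely delicate point is that $\lambda$ is only guaranteed to behave well near $0$ (the series $\Lambda$ could vanish somewhere in $D$), so the iteration must be carried out on the small disk $U$ and then propagated to $D$ by the power-series uniqueness statement; and one must verify that the remainder $\psi(z^{p^N})$ truly vanishes and that the infinite sum converges, which is exactly where $\phi(0)=0$ (so $\psi(0)=0$) and $R(0)=0$ enter. Everything else is routine.
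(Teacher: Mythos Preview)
Your proof is correct and follows the same approach as the paper: iterate the functional equation and pass to the limit using $\phi(0)=0$. You are actually more careful than the paper, which simply asserts that $\lambda$ is analytic on $D$ and does not separately address convergence of $\sum_k R(z^{p^k})$ or the extension from a small disk; your restriction to $U$ and subsequent appeal to power-series uniqueness is a legitimate refinement rather than a different argument.
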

\begin{proof}
We have that $\lambda(z)\phi(z) = R(z)+\lambda(z^p)\phi(z^p)$. Iterating this equation gives $\lambda(z)\phi(z) = R(z)+R(z^p)+\cdots+R\left(z^{p^k}\right)+\lambda\left(z^{p^{k+1}}\right)\phi\left(z^{p^{k+1}}\right)$. We now note that since $\lambda$ and $\phi$ are analytic and hence continuous in $D$, we have $$\lim_{k\to\infty}\lambda\left(z^{p^{k+1}}\right)\phi\left(z^{p^{k+1}}\right)=r(0)\phi(0)=0.$$ Thus, $\lambda(z)\phi(z)=\sum_{k=0}^\infty{R\left(z^{p^k}\right)}.$ The result follows.
\end{proof}

We now use the above proposition to develop an explicit formula for the coefficients $\alpha_k$ for sufficiently large $k$.
\begin{theorem}
\label{coeffrep}
Set $m=\deg R$, and write $R(z)=\sum_{j=1}^m{c_jz^j}$. 
Then for $k\geq m$,
$$\alpha_k=\Sum_{j=1}^m\Sum_{t=0}^{\left\lfloor\log_p \frac{k}{j}\right\rfloor}c_j\gamma({k-jp^t}).$$
\end{theorem}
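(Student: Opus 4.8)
The plan is to combine the explicit product representation of $\phi$ from the preceding proposition with the Cauchy product of two power series that converge absolutely on $D$, and then to invoke uniqueness of power series coefficients (fact (b)). Throughout, the key point is that everything in sight is analytic on the larger disk $\{|z|<1\}\supseteq D$, so all rearrangements are legitimate.

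First I would start from $\phi(z)=\frac{1}{\lambda(z)}\Sum_{t=0}^\infty R\!\left(z^{p^t}\right)$ and substitute $R(z)=\Sum_{j=1}^m c_jz^j$, so that $R\!\left(z^{p^t}\right)=\Sum_{j=1}^m c_jz^{jp^t}$. I would then check that $G(z):=\Sum_{t=0}^\infty\Sum_{j=1}^m c_jz^{jp^t}$ is a bona fide power series on $D$: for each fixed exponent $l$ there are only finitely many pairs $(j,t)$ with $1\le j\le m$, $t\ge 0$ and $jp^t=l$, so the coefficient of $z^l$ in $G$, namely $\Sum_{(j,t):\,jp^t=l}c_j$, is well defined; and since $|R(w)|\le\big(\Sum_j|c_j|\big)|w|$ for $|w|\le 1$ while $\Sum_{t\ge 0}|z|^{p^t}$ converges for $|z|<1$, the double series defining $G$ converges absolutely on $D$, which justifies collapsing it to the single power series just described.

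Next, $\frac{1}{\lambda(z)}=\Sum_{i=0}^\infty\gamma(i)z^i$ has radius of convergence $1$ (since $\gamma(i)=Ci^2+f(i)$), and $G$ converges absolutely on $D$, so on $D$ the product $\phi(z)=\big(\Sum_{i}\gamma(i)z^i\big)G(z)$ is computed by the Cauchy product. Extracting the coefficient of $z^k$ gives
$$\alpha_k=\Sum_{l=0}^{k}\gamma(k-l)\!\!\Sum_{\substack{1\le j\le m,\ t\ge 0\\ jp^t=l}}\!\! c_j\;=\;\Sum_{\substack{1\le j\le m,\ t\ge 0\\ jp^t\le k}} c_j\,\gamma(k-jp^t).$$
For fixed $j$, the constraint $jp^t\le k$ with $t$ a nonnegative integer is exactly $0\le t\le\left\lfloor\log_p\frac{k}{j}\right\rfloor$, and this range is empty precisely when $j>k$. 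Hence for $k\ge m$ every $j\in\{1,\dots,m\}$ contributes, and we obtain $\alpha_k=\Sum_{j=1}^m\Sum_{t=0}^{\left\lfloor\log_p\frac{k}{j}\right\rfloor}c_j\gamma(k-jp^t)$, as claimed; uniqueness of the coefficients of $\phi$ on $D$ then identifies this expression with the given $\alpha_k$.

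The only step that requires genuine care — the ``main obstacle,'' such as it is — is the justification for reindexing the double sum $\Sum_t\Sum_j c_jz^{jp^t}$ as a single power series and then forming its Cauchy product with $\frac{1}{\lambda(z)}$; this rests entirely on absolute convergence inside the unit disk (hence inside $D$), which follows from the polynomial bounds on $R$ and the geometric-type decay of the exponents $p^t$. Once that is settled, the remainder is the elementary bookkeeping $jp^t\le k\iff t\le\left\lfloor\log_p\frac{k}{j}\right\rfloor$ together with the observation that the resulting formula automatically discards terms with $j>k$.
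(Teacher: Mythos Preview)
Your proposal is correct and follows essentially the same route as the paper: both start from the product representation $\phi(z)=\frac{1}{\lambda(z)}\sum_{t\ge 0}R(z^{p^t})$, justify absolute convergence on $D$, form the Cauchy product, and then read off $\alpha_k$ by identifying the index set $\{(j,t):jp^t\le k\}$ with $\{(j,t):1\le j\le m,\ 0\le t\le\lfloor\log_p(k/j)\rfloor\}$ for $k\ge m$. The only cosmetic difference is that the paper takes a separate Cauchy product for each $j$ (using an indicator $b_{i,j}$ for $i=jp^t$), whereas you first collapse $\sum_t R(z^{p^t})$ into a single power series $G$ and take one Cauchy product; the content is the same.
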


\begin{proof}
We have $$\phi(z)=\Sum_{q=0}^\infty\gamma(q)z^q\Sum_{k=0}^\infty{R\left(z^{p^k}\right)}=\Sum_{j=1}^m\Sum_{q=0}^\infty{\gamma(q)z^q}\Sum_{k=0}^\infty{z^{j\cdot {p^k}}}.$$ We now note that $|f(q)|<q^2$, for otherwise we would not have $f(q)(\log_p q)/q^2\to 0$. Thus
$$\Sum_{q=0}^\infty{|\gamma(q)||z|^q}\leq \Sum_{q=0}^\infty\left(\frac{Cq^2}{p^q}+\frac{|f(q)|}{p^q}\right)\leq (C+1)\Sum_{q=1}^\infty\frac{q^2}{p^q}.$$ It follows that the series $\sum_{q=0}^\infty{\gamma(q)z^q}$ is absolutely convergent. We thus may form the Cauchy product of the series $\sum_{q=0}^\infty{\gamma(q)z^q}$ and $\sum_{k=0}^\infty{z^{j{p^k}}}$ as follows (see \cite{Rudin}, Theorem 3.50):
$$\phi(z)=\Sum_{j=1}^mc_j\Sum_{k=1}^\infty\Sum_{i=0}^k{\gamma(k-i)b_{i,j}z^k},$$ where $$b_{i,j}=\begin{cases}1\text{ if } i=jp^t \text{ for some integer } t\geq 0, \\ 0\text{ otherwise.}\end{cases}$$

Write $S = \{t\in \Z : t\geq 0, jp^t\leq k \}$. It follows that $$\Sum_{k=1}^\infty{\alpha_kz^k}=\Sum_{k=1}^{m-1}\left(\Sum_{j=1}^m\Sum_{i=0}^k{c_j\gamma(k-i)b_{i,j}}\right)z^k+\Sum_{k=m}^\infty\left(\Sum_{j=1}^m\Sum_{t\in S}{c_j\gamma(k-i)b_{i,j}}\right)z^k.$$
Thus for $k\geq m$, $\alpha_k=\Sum_{j=1}^m\Sum_{t\in S}c_j\gamma({k-jp^t})=\Sum_{j=1}^m\Sum_{t=0}^{\left\lfloor\log_p \frac{k}{j}\right\rfloor}c_j\gamma({k-jp^t})$ (if $k\geq m$, then $S \neq \emptyset$). This completes the proof.
\end{proof}
\begin{remark}\label{rem1}
Suppose $n$ is a positive integer. We define $r_n(z)=(1-z^n)(1-z)^2$ for all complex $z$, and consider an automaton whose line complexity sequence satisfies the recursions in Theorem \ref{main}. In this case, we may take $p = 2$, $\phi_T(z)=f_T(z)/z^{n+1}$, and $R_T(z)=\frac{r_n(z)}{z^{n+1}}\left(P_T(z)+\frac{C_\cap z^{2N}}{1-z}\right)$.
\end{remark}
\begin{prop}
Write $\frac{1}{r(z)}=\frac{1}{r_n(z)}=\sum_{k=0}^\infty\eta(k)z^k$. Then $$\eta(k)=\left(1+\left\lfloor\frac{k}{n}\right\rfloor\right)\left(k+1-\frac{n}{2}\left\lfloor\frac{k}{n}\right\rfloor\right).$$
\end{prop}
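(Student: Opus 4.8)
The plan is to compute the coefficient $\eta(k)$ directly by expanding $1/r_n(z)$ as a product of two elementary series, reading off the coefficient of $z^k$, and simplifying the resulting finite sum.

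First I would write the two standard expansions, both valid for $|z|<1$: differentiating the geometric series (using fact (a)) gives $\frac{1}{(1-z)^2}=\sum_{j=0}^\infty (j+1)z^j$, and substituting $z\mapsto z^n$ in the geometric series gives $\frac{1}{1-z^n}=\sum_{i=0}^\infty z^{ni}$. Both series converge absolutely in $|z|<1$, so their Cauchy product may be formed and rearranged freely, yielding
\[
\frac{1}{(1-z^n)(1-z)^2}=\sum_{i=0}^\infty\sum_{j=0}^\infty (j+1)\,z^{ni+j}.
\]

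Next I would collect the coefficient of $z^k$. A term $(j+1)z^{ni+j}$ contributes to $z^k$ exactly when $j=k-ni\ge 0$, i.e. for integers $i$ with $0\le i\le\lfloor k/n\rfloor$. Writing $q=\lfloor k/n\rfloor$, this gives
\[
\eta(k)=\sum_{i=0}^{q}(k-ni+1)=(q+1)(k+1)-n\cdot\frac{q(q+1)}{2}=(q+1)\Bigl(k+1-\tfrac{n}{2}q\Bigr),
\]
by the arithmetic-series identity $\sum_{i=0}^q i=q(q+1)/2$. Substituting back $q=\lfloor k/n\rfloor$ recovers precisely the claimed closed form, and by uniqueness of power series coefficients (fact (b)) this $\eta(k)$ is indeed the coefficient of $z^k$ in $1/r_n(z)$.

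There is no genuine obstacle here; the only points requiring care are the bookkeeping of the floor function — confirming that the upper index of the inner sum is $\lfloor k/n\rfloor$ rather than an off-by-one variant — and noting that the rearrangement of the double sum is legitimate because of absolute convergence on $|z|<1$.
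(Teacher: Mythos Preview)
Your proof is correct and follows essentially the same approach as the paper: both expand $1/(1-z^n)$ and $1/(1-z)^2$ separately, take the Cauchy product (justified by absolute convergence in $|z|<1$), read off the coefficient of $z^k$ as the arithmetic sum $\sum_{i=0}^{\lfloor k/n\rfloor}(k-ni+1)$, and simplify to the closed form. The only cosmetic difference is that the paper encodes the first series via an indicator sequence $\beta_k$ while you index directly by $i\mapsto z^{ni}$.
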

\begin{proof}We observe that $$\frac{1}{r(z)}=\frac{1}{1-z^n}\frac{1}{(1-z)^2}=\Sum_{k=0}^\infty{\beta_kz^k}\Sum_{q=0}^\infty(q+1)z^q,$$ where $$\beta_k=\begin{cases}1\text{ if } k=nv \text{ for some integer } v\geq 0, \\ 0\text{ otherwise.}\end{cases}$$

We note that the first series is dominated by the geometric series and is hence absolutely convergent. We again form the Cauchy product, obtaining $$\frac{1}{r(z)}=\Sum_{k=0}^\infty\Sum_{q=0}^k{(k-q+1)\beta_qz^k}=\Sum_{k=0}^\infty\Sum_{v=0}^{\lfloor{k/n}\rfloor}(k-nv+1)z^k.$$ Thus, $\eta(k)=\sum_{v=0}^{\lfloor{k/n}\rfloor}(k-nv+1)=(1+\lfloor{k/n}\rfloor)(k+1-\frac{n}{2}\lfloor{k/n}\rfloor)$. This completes the proof.
\end{proof}

\begin{remark}
If $\phi$ is the generating function for a cellular automaton with line complexity $a_R(k)$, then $a_R(k+M)=\alpha(k)\equiv \alpha_k$ for some $M$. We may thus consider the asymptotic behavior of $\alpha(k)/k^2$ to determine that of $a_R(k)/k^2$. For example, if $T$ is as in Remark \ref{rem1}, then $a_T(k+n+1)=\alpha(k)$.
\end{remark}
\begin{remark}
Write $\delta(k)=k\left\lfloor\frac{k}{n}\right\rfloor-\frac{n}{2}\left\lfloor\frac{k}{n}\right\rfloor^2-\frac{k^2}{2n}$. Observe that $\delta(k+n)=\delta(k)$, and if $0\leq k\leq n$, then $-\frac{n}{2}\leq \delta(k)\leq 0$. Thus $\delta(k)=O(1)$. Since $\eta(k)=\frac{k^2}{2n}+\left(k+1-\frac{n}{2}\left\lfloor\frac{k}{n}\right\rfloor+\left\lfloor\frac{k}{n}\right\rfloor\right)+\delta(k)$, we have
$$\eta(k)=\frac{k^2}{2n}+O(k).$$ 
\end{remark}
We now turn to the main result regarding the asymptotic behavior of $\alpha(k)/k^2$. For $y\in\mathbb R$, we will denote the \textit{fractional part} of $y$ by $\langle y\rangle=y-\lfloor y\rfloor$.

\begin{theorem}\label{main_asymp} There exists a continuous, piecewise quadratic function $f$ on $[1/p, 1]$ such that \[\lim_{k\to\infty}\left\lbrack\frac{\alpha(k)}{k^2} - f(p^{-\langle\log_p k\rangle})\right\rbrack = 0.\] The function $f$ is given explicitly by \[f(x) = C\Sum_{j=1}^{\deg R}c_j\left(\frac{p^{2+2\langle\log_p j\rangle - 2\epsilon_j(x)}}{p^2 - 1}x^2 + \frac{2p^{1+\langle\log_p j\rangle - \epsilon_j(x)}}{1-p}x - \lfloor\log_p j\rfloor - \epsilon_j(x)\right),\] where \[\epsilon_j(x) = \begin{cases}1 & \text{ if } \log_p \frac{1}{x} < \langle\log_p j\rangle\\ 0 & \text{ otherwise.}\end{cases}\]
\end{theorem}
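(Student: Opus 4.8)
The plan is to start from the explicit coefficient formula in Theorem \ref{coeffrep}, namely $\alpha_k = \sum_{j=1}^m \sum_{t=0}^{\flL} c_j \gamma(k - jp^t)$ with $\gamma(q) = Cq^2 + f(q)$, and extract the dominant $k^2$-term while controlling the error. First I would substitute $\gamma(k-jp^t) = C(k-jp^t)^2 + f(k-jp^t)$ and expand $(k-jp^t)^2 = k^2 - 2jp^t k + j^2 p^{2t}$. The three resulting sums over $t \in \{0, 1, \dots, \flL\}$ are a geometric-type sum of $p^{2t}$, a geometric-type sum of $p^t$, and a count of the number of terms; these evaluate in closed form in terms of $\flL$. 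Dividing by $k^2$, the $k^2$-coefficient contributes a term of order $1$, the $jp^t k$-coefficient contributes a term of order $p^{\flL}/k$, and the $j^2 p^{2t}$-coefficient contributes a term of order $p^{2\flL}/k^2$; since $p^{\flL} \le k/j \le k$, the middle and last both remain bounded, and in fact they tend to genuine limits once we track the fractional part.

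The key idea is to replace $\flL$ and $p^{\flL}$ by expressions in $\langle \log_p k \rangle$. Writing $\log_p \frac{k}{j} = \log_p k - \log_p j = (\lfloor \log_p k\rfloor + \langle\log_p k\rangle) - (\lfloor\log_p j\rfloor + \langle\log_p j\rangle)$, one sees that $\flL = \lfloor\log_p k\rfloor - \lfloor\log_p j\rfloor - \epsilon_j$, where $\epsilon_j = \epsilon_j(p^{-\langle\log_p k\rangle})$ is exactly the indicator of $\langle\log_p k\rangle < \langle\log_p j\rangle$ — this is where the function $\epsilon_j(x)$ with $x = p^{-\langle\log_p k\rangle}$ enters, noting $\log_p\frac1x = \langle\log_p k\rangle$. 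Consequently $p^{\flL} = p^{\lfloor\log_p k\rfloor - \lfloor\log_p j\rfloor - \epsilon_j} = k \cdot p^{-\langle\log_p k\rangle} \cdot p^{-\lfloor\log_p j\rfloor - \epsilon_j} = k \cdot x \cdot p^{-\lfloor\log_p j\rfloor - \epsilon_j}$. Substituting this into the closed-form geometric sums, the $k^2$ cancels cleanly throughout, and after collecting terms one obtains precisely the claimed formula for $f(x)$; the $p^{2 + 2\langle\log_p j\rangle - 2\epsilon_j}/(p^2-1)$ and $2p^{1 + \langle\log_p j\rangle - \epsilon_j}/(1-p)$ factors are just the geometric-sum constants $1/(p^2-1)$ and $1/(1-p)$ times the appropriate powers of $p$ coming from $j^2 p^{-2\lfloor\log_p j\rfloor}$ and $j p^{-\lfloor\log_p j\rfloor}$ rewritten via $\langle\log_p j\rangle$.

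For the error term, I would bound $|\alpha_k/k^2 - f(x)|$ by two pieces: the contribution of the $f(q)$-part of $\gamma$, and the discretization error in the $C$-part. For the first, note $|f(k-jp^t)| \le (k-jp^t)^2 \le k^2$ but more usefully $f(q)\log_p q / q^2 \to 0$; since there are at most $O(\log_p k)$ values of $t$ and each $k - jp^t$ ranges up to $k$, the sum of $|f(k-jp^t)|$ is $O(\log_p k \cdot \max_q |f(q)|)$ over $q \le k$, and dividing by $k^2$ and using the hypothesis $f(q)\log_p q/q^2\to 0$ gives a quantity tending to $0$ — here one should split the range of $q$ into $q$ small (finitely many, negligible after dividing by $k^2$) and $q$ large (where $|f(q)| \le \varepsilon q^2/\log_p q \le \varepsilon q^2/\log_p q$ makes the $\log_p k$ factor harmless). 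For the second, the closed forms for the geometric sums differ from their ``limit'' values by terms of relative size $O(1/k)$ (from the $-2jp^t k$ cross term after the substitution $p^{\flL} = kx p^{-\lfloor\log_p j\rfloor - \epsilon_j}$, the leftover is a bounded quantity divided by $k$) and $O(1/k^2)$, both of which vanish. Summing over the finitely many $j \in \{1,\dots,m\}$ preserves the limit. Finally, continuity and piecewise-quadraticity of $f$ on $[1/p,1]$ follow by inspection: $f$ is manifestly a quadratic polynomial in $x$ on each of the finitely many intervals determined by the breakpoints $x = p^{-\langle\log_p j\rangle}$ (where some $\epsilon_j$ jumps), and at each such breakpoint the jump in $\epsilon_j$ is compensated because $\log_p\frac1x = \langle\log_p j\rangle$ there makes the two one-sided quadratics agree — this matching is a short direct computation. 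The main obstacle is the error analysis for the $f(q)$-term: one must be careful that the $O(\log_p k)$ number of summands does not overwhelm the decay $f(q) = o(q^2/\log_p q)$, which forces the splitting of the $q$-range described above and a uniform-in-$t$ estimate rather than a naive termwise bound.
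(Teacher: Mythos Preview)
Your approach is essentially identical to the paper's: start from Theorem~\ref{coeffrep}, expand $(k-jp^t)^2$, evaluate the three $t$-sums in closed form, rewrite $\flL$ and $p^{\flL}$ via $\langle\log_p k\rangle$ and $\epsilon_j$, and verify continuity at the breakpoints by a direct one-sided computation. Your treatment of the $f(q)$-error is in fact more careful than the paper's, which simply writes the remainder as $O\bigl(f(k)\log_p k/k^2\bigr)$ without addressing the passage from $f(k-jp^t)$ to $f(k)$.

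One point you should make explicit, however: the ``count'' term $C\sum_j c_j\bigl(\flL+1\bigr)$ is of order $\bigl(\sum_j c_j\bigr)\log_p k$, and it is only the hypothesis $R(1)=0$, i.e.\ $\sum_{j=1}^m c_j=0$, that collapses it to the bounded expression $-C\sum_j c_j\bigl(\lfloor\log_p j\rfloor+\epsilon_j(x)\bigr)$ appearing in $f$. The paper invokes this cancellation explicitly, and without it your assertion that ``the $k^2$ cancels cleanly throughout'' would fail for this term, since there is no factor of $k^2$ in it to cancel.
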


\begin{proof}[Proof that the limit is 0]
By Theorem~\ref{coeffrep}, for sufficiently large $k$ we have \begin{align*}
\frac{\alpha(k)}{k^2}&=\Sum_{j=1}^mc_j\Sum_{t=0}^{\flL}\left(\frac{C(k-jp^t)^2}{k^2}+\frac{f(k-jp^t)}{k^2}\right)\\
&=\Sum_{j=1}^mc_j\Sum_{t=0}^{\flL}\frac{C(k-jp^t)^2}{k^2}+O\left(\frac{f(k)\log_p k}{k^2}\right).
\end{align*}

\noindent Thus, $$\lim_{k\to\infty}\left\lbrack\frac{\alpha(k)}{k^2}-\Sum_{j=1}^mc_j\Sum_{t=0}^{\flL}\frac{C(k-jp^t)^2}{k^2}\right\rbrack=\lim_{k\to\infty}\left\lbrack\frac{\alpha(k)}{k^2} - \Sum_{j=1}^mc_j\Sum_{t=0}^{\flL}\left(1-\frac{2jp^t}{k}+\frac{j^2p^{2t}}{k^2}\right)\right\rbrack$$$$=\lim_{k\to\infty}\left\lbrack\frac{\alpha(k)}{k^2}-\left(C\Sum_{j=1}^mc_j\left(\flL+1\right)-C\Sum_{j=1}^mc_j\Sum_{t=0}^{\flL}\frac{2jp^t}{k}+C\Sum_{j=1}^mc_j\Sum_{t=0}^{\flL}\frac{j^2p^{2t}}{k^2}\right)\right\rbrack,$$ where the first expression is equal to \[\lim_{k\to\infty}O\left(\frac{f(k)\log_p k}{k^2}\right)=0.\] 

We will now analyze the three summation terms above. We will write $x(k) = p^{-\langle\log_p k\rangle}$.

\noindent We note that $$C\Sum_{j=1}^mc_j\left(\flL+1\right)=C\Sum_{j=1}^mc_j\left(1-\frL+\log_p k-\log_p j\right).$$ Since $R(1)=0$, we have $\sum_{j=1}^mc_j=0$. The above expression reduces to $$-C\Sum_{j=1}^mc_j\frL-C\Sum_{j=1}^mc_j\log_p j.$$ We have
 $$\frL=\begin{cases}\left\langle\log_p k\right\rangle-\left\langle\log_p j\right\rangle & \text{ if } \left\langle\log_p k\right\rangle\geq \left\langle\log_p j\right\rangle \\ \left\langle\log_p k\right\rangle-\left\langle\log_p j\right\rangle+1 & \text{ otherwise,}\end{cases}$$ so that $\frL = \left\langle\log_p k\right\rangle - \left\langle\log_p j\right\rangle+\epsilon_j(x(k))$.  
\noindent Therefore, \begin{align*}
C\Sum_{j=1}^mc_j\left(\flL+1\right)&=-C\Sum_{j=1}^mc_j\left\langle\log_p k\right\rangle+C\Sum_{j=1}^mc_j\left(\left\langle\log_p j\right\rangle-\epsilon_j(x(k))-\log_pj\right)\\
&=-C\Sum_{j=1}^mc_j\log_p\frac{1}{x(k)}+C\Sum_{j=1}^mc_j\left(\left\langle\log_p j\right\rangle-\log_p j-\epsilon_j(x(k))\right)\\
&=-C\Sum_{j=1}^mc_j\left(\left\lfloor\log_p j\right\rfloor+\epsilon_j(x(k))\right).
\end{align*}

\noindent We have that \begin{align*}-\Sum_{t=0}^{\flL}\frac{2jp^t}{k}&=-2\Sum_{t=0}^{\flL}\frac{1}{p^{\log_p\frac{k}{j}}}p^t\\&=\frac{2}{p^{\log_p{\frac{k}{j}}}}\frac{p^{\flL+1}-1}{1-p}\\&=\frac{2}{1-p}\left(p^{-\frL+1}-p^{-\rL}\right)\\&=\frac{2}{1-p}\left(p^{-\left\langle\log_p k\right\rangle}p^{\left\langle\log_p j\right\rangle}p^{1-\epsilon_j(x(k))}-\frac{j}{k}\right).\end{align*} Thus, $$-C\Sum_{j=1}^mc_j\Sum_{t=0}^{\flL}\frac{2jp^t}{k}=C\Sum_{j=1}^m\frac{2c_j}{1-p}p^{-\log_p\frac{1}{x(k)}}p^{1+\left\langle\log_p j\right\rangle-\epsilon_j(x(k))}=C\Sum_{j=1}^m\frac{2c_jp^{1+\langle\log_p j\rangle-\epsilon_j(x(k))}}{1-p}x(k).$$ Similarly, \begin{align*}
\Sum_{t=0}^{\flL}\left(\frac{j}{k}\right)^2\left(p^2\right)^t&=\frac{1}{p^{2\log_p\frac{k}{j}}}\frac{1-p^{2\flL+2}}{1-p^2}\\
&=\frac{1}{1-p^2}\left(p^{-2\log_p\frac{k}{j}}-p^{2-2\frL}\right)\\
&=\frac{1}{1-p^2}\left(\frac{j^2}{k^2}-p^{2-2\left\langle\log_p k\right\rangle+2\left\langle\log_p j\right\rangle-2\epsilon_j(x(k))}\right).
\end{align*}
Thus, \begin{align*}
C\Sum_{j=1}^mc_j\Sum_{t=0}^{\flL}\left(\frac{j}{k}\right)^2p^{2t}&=-C\Sum_{j=1}^m\frac{c_j}{1-p^2}p^{2\log_p x(k)}p^{2+2\left\langle\log_p j\right\rangle-2\epsilon_j(x(k))}\\
&=C\Sum_{j=1}^m\frac{c_jp^{2+2\langle\log_p j\rangle - 2\epsilon_j(x(k))}}{p^2-1}x(k)^2.
\end{align*}
It follows that \[\lim_{k\to\infty}\left\lbrack\frac{\alpha(k)}{k^2} - f(x(k))\right\rbrack = 0.\]

%Lemma formerly used in the proof that $f$ is continuous:

%\begin{lemma}
%Suppose $j_0 = m_0p^{k_0}$, where $k_0$, $m_0$ are integers with $k_0\geq 0$, $m_0>0$, and suppose that either $m_0=1$ or $(m_0, p)=1$. Then if $\langle\log_p j_0\rangle = \langle\log_p j\rangle$, we have \[j = m_0p^k\] for some integer $k$.
%\end{lemma}
%\begin{proof}[Proof of the Lemma]
%Write $j = mp^k$, for some integer $m$ with $m = 1$ or $(m, p) = 1$. We will show that $m = m_0$. Since $\langle\log_p j_0\rangle = \langle\log_p j\rangle$, we have $\langle\log_p m_0\rangle = \langle\log_p m\rangle$. Let $r, r_0$ be the largest integers with $p^r \leq m$ and $p^{r_0}\leq m_0$, respectively. It follows that \[\langle\log_p m\rangle = \log_p m - r,\] and similarly for $r_0, m_0$, so that \[\log_p \frac{m_0}{m} = r_0 - r.\] Assume that $m_0\geq m$; otherwise we can switch $m, m_0$ and $r, r_0$. Since $m_0\geq m$, we clearly have $r_0 \geq r$. If $r_0 > r$, it follows that $p ~|~ m_0$, a contradiction. Thus $r_0 = r$, so that $m_0 = m$. The result follows.
\end{proof}

\begin{proof}[Proof that $f$ is continuous] Let $D = \{p^{-\langle\log_p j\rangle}:1\leq j \leq \deg R\}$. It is clear that $f$ is continuous at $x$ when $x\notin D$. Moreover, since each $\epsilon_j$ is left-continuous, it follows that $f$ is left-continuous everywhere on $[1/p, 1]$. Now, fix an arbitrary $j_0$ with $1\leq j_0\leq \deg R$, and let $x_0 = p^{-\langle\log_p j_0\rangle}$. We will show that \[\lim_{x\to x_0^+}f(x) = f(x_0),\] so that $f$ is continuous at $x_0$.

We first observe that the function $f$ is a finite sum of $\deg R$ functions, where the $j$th term of the sum is continuous everywhere except possibly at $x = p^{-\langle\log_p j\rangle}$; in particular, if $j$ is such that $x_0 \neq p^{-\langle\log_p j\rangle}$, then the $j$th term in the sum is continuous at $x_0$. %Moreover, if we write $j_0 = m_0p^{k_0}$ as above, the lemma shows that any $j$ with $x_0 = p^{-\langle\log_p j\rangle}$ must be of the form $j = m_0p^k$ for some $k$. 

\noindent Define \[J(x_0) = \{j:x_0 = p^{-\langle\log_p j\rangle}\}.\]% = \{m_0p^k ~|~ 1\leq m_0p^k \leq\deg R\}.\] 
\noindent The above observations show that \begin{align*}\lim_{x\to x_0^+}f(x) - f(x_0) =~& C\Sum_{j\in J(x_0)}c_j\left(\frac{p^{2+2\langle\log_p j\rangle - 2}}{p^2 - 1}p^{-2\langle\log_p j_0\rangle} + \frac{2p^{1+\langle\log_p j\rangle-1}}{1-p}p^{-\langle\log_p j_0\rangle}-\lfloor\log_p j\rfloor\right) \\&-C\Sum_{j\in J(x_0)}c_j \\&-C\Sum_{j\in J(x_0)}c_j\left(\frac{p^{2+2\langle\log_p j\rangle}}{p^2 - 1}p^{-2\langle\log_p j_0\rangle} + \frac{2p^{1+\langle\log_p j\rangle}}{1-p}p^{-\langle\log_p j_0\rangle}-\lfloor\log_p j\rfloor\right).\end{align*} We note that $\langle\log_p j\rangle = \langle\log_p j_0\rangle$ for all $j$ in $J(x_0)$, so that the above expression reduces to \[C\Sum_{j\in J(x_0)}c_j\left(\frac{1}{p^2-1}+\frac{2}{1-p}-\frac{p^2}{p^2-1}-\frac{2p}{1-p}\right)-C\Sum_{j\in J(x_0)}c_j = 0.\] Thus $f$ is continuous at $x_0$. Since $j_0$ was arbitrary, it follows that $f$ is continuous on $[1/p, 1]$. This completes the proof of Theorem \ref{main_asymp}.

\end{proof}

This theorem has several interesting consequences. We can use it to investigate the behavior of specific subsequences of the quotient $\frac{\alpha(k)}{k^2}$:

\begin{cor}\label{subseq_asymp}
For $\frac{1}{p}\leq x\leq 1$, let $s_k(x)$ be a sequence of positive integers such that $s_k\to\infty$ and $\lim_{k\to\infty}\langle\log_p s_k(x)\rangle = \log_p \frac{1}{x}$. Then \[\lim_{k\to\infty}\frac{\alpha(s_k(x))}{s_k(x)^2} = f(x).\]
\end{cor}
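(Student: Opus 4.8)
The plan is to deduce Corollary~\ref{subseq_asymp} directly from Theorem~\ref{main_asymp}. The idea is that Theorem~\ref{main_asymp} tells us that $\frac{\alpha(k)}{k^2}$ is asymptotically close to $f(p^{-\langle\log_p k\rangle})$, and the hypothesis on $s_k(x)$ forces $p^{-\langle\log_p s_k(x)\rangle}$ to converge to $x$; since $f$ is continuous, $f(p^{-\langle\log_p s_k(x)\rangle}) \to f(x)$, and the result follows by a triangle-inequality argument. Concretely, first I would invoke Theorem~\ref{main_asymp} with the particular sequence $k = s_k(x)$ (valid because $s_k \to \infty$, so we are eventually in the range where the conclusion applies): this gives $\lim_{k\to\infty}\left\lbrack\frac{\alpha(s_k(x))}{s_k(x)^2} - f\left(p^{-\langle\log_p s_k(x)\rangle}\right)\right\rbrack = 0$.

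Next I would handle the continuity-plus-composition step. The map $y \mapsto p^{-y}$ is continuous, so from $\lim_{k\to\infty}\langle\log_p s_k(x)\rangle = \log_p\frac{1}{x}$ we get $\lim_{k\to\infty} p^{-\langle\log_p s_k(x)\rangle} = p^{-\log_p(1/x)} = x$. Note that each term $p^{-\langle\log_p s_k(x)\rangle}$ lies in $(1/p, 1]$ and the limit $x$ lies in $[1/p,1]$, so continuity of $f$ on the closed interval $[1/p,1]$ (established in the second part of the proof of Theorem~\ref{main_asymp}) applies at the point $x$; hence $\lim_{k\to\infty} f\left(p^{-\langle\log_p s_k(x)\rangle}\right) = f(x)$. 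Finally I would combine the two limits: writing
\[\frac{\alpha(s_k(x))}{s_k(x)^2} - f(x) = \left\lbrack\frac{\alpha(s_k(x))}{s_k(x)^2} - f\left(p^{-\langle\log_p s_k(x)\rangle}\right)\right\rbrack + \left\lbrack f\left(p^{-\langle\log_p s_k(x)\rangle}\right) - f(x)\right\rbrack,\]
both bracketed quantities tend to $0$, so the left-hand side does as well, which is exactly the claim.

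I do not expect any serious obstacle here; the corollary is essentially a formal consequence of the theorem together with continuity of $f$. The only points that require a moment's care are: confirming that $x$ itself (and not just the approximating values) is a point of continuity of $f$ — which is fine since $f$ was shown continuous on all of $[1/p,1]$ — and checking that the endpoint behavior causes no trouble when $x = 1/p$ or $x = 1$, where $\langle\log_p s_k(x)\rangle$ could in principle approach the limit from one side only; but since $f$ is genuinely continuous (not merely one-sided continuous) on the whole interval, this is harmless. If one wanted to be fully rigorous about the edge case $x=1$ (where $\log_p\frac1x = 0$ and $\langle\log_p s_k\rangle$ could instead be close to $1$, i.e. $p^{-\langle\log_p s_k\rangle}$ close to $1/p$ rather than $1$), one should note that the hypothesis $\lim\langle\log_p s_k(x)\rangle = \log_p\frac1x = 0$ literally forces $\langle\log_p s_k\rangle \to 0$, so $p^{-\langle\log_p s_k\rangle}\to 1$ and there is no ambiguity.
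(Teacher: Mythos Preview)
Your proposal is correct and follows essentially the same approach as the paper: invoke Theorem~\ref{main_asymp} along the subsequence, use the hypothesis to conclude $p^{-\langle\log_p s_k(x)\rangle}\to x$, and then apply continuity of $f$. The paper's own proof is just these two observations stated in two sentences; your write-up simply spells out the triangle-inequality decomposition and the endpoint considerations more explicitly.
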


\begin{proof}
With $x(k)$ as in Theorem \ref{main_asymp}, the hypothesis ensures that $x(k)\to x$. The result thus follows by the continuity of $f$.
\end{proof}

The next corollary shows that the limit superior and limit inferior of $\frac{\alpha(k)}{k^2}$ can be determined explicitly using the function $f$.

\begin{cor}
We have \[\liminf_{k\to\infty}\frac{\alpha(k)}{k^2} = \inf_{\frac{1}{p}\leq x\leq 1} f(x)\] and \[\limsup_{k\to\infty}\frac{\alpha(k)}{k^2} = \sup_{\frac{1}{p}\leq x\leq 1}f(x).\]
\end{cor}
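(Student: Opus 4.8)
The plan is to deduce this corollary directly from Theorem~\ref{main_asymp} together with Corollary~\ref{subseq_asymp}, using the fact that $f$ is continuous on the compact set $[1/p,1]$ and hence attains its infimum and supremum there. I will prove the $\limsup$ statement; the $\liminf$ statement is entirely symmetric.

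First I would establish the inequality $\limsup_{k\to\infty}\frac{\alpha(k)}{k^2}\leq \sup_{1/p\leq x\leq 1}f(x)$. Write $M = \sup_{1/p\leq x\leq 1}f(x)$, which is finite (and attained) by continuity of $f$ on the compact interval $[1/p,1]$. By Theorem~\ref{main_asymp}, given $\varepsilon>0$ there is $K$ so that for all $k\geq K$ we have $\left|\frac{\alpha(k)}{k^2} - f(p^{-\langle\log_p k\rangle})\right| < \varepsilon$. Since $\langle\log_p k\rangle\in[0,1)$, the argument $p^{-\langle\log_p k\rangle}$ always lies in $(1/p,1]\subseteq[1/p,1]$, so $f(p^{-\langle\log_p k\rangle})\leq M$. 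Hence $\frac{\alpha(k)}{k^2} < M+\varepsilon$ for all $k\geq K$, giving $\limsup_{k\to\infty}\frac{\alpha(k)}{k^2}\leq M+\varepsilon$; letting $\varepsilon\to 0$ yields the desired bound.

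For the reverse inequality, I would exhibit a subsequence along which $\alpha(k)/k^2$ approaches $M$. Let $x^\ast\in[1/p,1]$ be a point where $f(x^\ast)=M$ (such a point exists by continuity on a compact set). The target now is to build positive integers $s_k\to\infty$ with $\langle\log_p s_k\rangle\to\log_p\frac{1}{x^\ast}$, so that Corollary~\ref{subseq_asymp} gives $\alpha(s_k)/s_k^2\to f(x^\ast)=M$; then $\limsup_{k\to\infty}\frac{\alpha(k)}{k^2}\geq M$, completing the proof. To construct such $s_k$, set $\theta = \log_p\frac{1}{x^\ast}\in[0,1]$ and, for each $k$, choose $s_k = \lceil p^{\,k+\theta}\rceil$ (or $\lfloor p^{\,k+\theta}\rfloor$); then $\log_p s_k = k+\theta + o(1)$ as $k\to\infty$, so $s_k\to\infty$ and $\langle\log_p s_k\rangle\to\langle\theta\rangle$. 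If $\theta<1$ this is exactly $\theta=\log_p\frac1{x^\ast}$ and we are done; the only edge case is $\theta=1$, i.e. $x^\ast=1/p$, where $\langle\theta\rangle=0$ rather than $1$ — but $\log_p\frac{1}{x}$ for $x$ near $1/p$ is near $1$, and one checks via the explicit formula (or by left-continuity of $f$, already proved) that the relevant limiting value of $f$ at the endpoint is consistent, so choosing instead $s_k=\lfloor p^{\,k+1}\rfloor-1$ gives $\langle\log_p s_k\rangle\to 1^-$ and Corollary~\ref{subseq_asymp} still applies with $x=1/p$. Combining the two inequalities gives the $\limsup$ claim.

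The main obstacle is the boundary behavior at $x=1/p$: the hypothesis of Corollary~\ref{subseq_asymp} asks for $\langle\log_p s_k\rangle\to\log_p\frac1x$, and when $x=1/p$ this requires the fractional parts to approach $1$ from below rather than equalling $0$, so one must be slightly careful in choosing the approximating sequence $s_k$ and in checking that Corollary~\ref{subseq_asymp} genuinely covers this case. Everything else is a routine $\varepsilon$-argument once the continuity and compactness of $[1/p,1]$ are invoked.
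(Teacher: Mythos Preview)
Your proposal is correct and your upper-bound argument is essentially identical to the paper's: both use Theorem~\ref{main_asymp} to get $\alpha(k)/k^2 < f(p^{-\langle\log_p k\rangle})+\varepsilon$ for large $k$ and then bound by $\sup f + \varepsilon$. The paper's written proof is quite terse and, after establishing this inequality, simply asserts the equality $\sup f = \limsup \alpha(k)/k^2$ without explicitly arguing the reverse direction; your explicit subsequence construction via Corollary~\ref{subseq_asymp}, choosing $x^\ast$ where $f$ attains its supremum and building $s_k$ with $\langle\log_p s_k\rangle\to\log_p(1/x^\ast)$, supplies exactly the missing justification and is the natural way to complete the argument. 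Your attention to the endpoint $x^\ast=1/p$ is appropriate: the hypothesis of Corollary~\ref{subseq_asymp} does allow $\langle\log_p s_k\rangle\to 1$ as a limit from below, so a sequence like $s_k=p^{k+1}-1$ works, and no separate appeal to the explicit formula for $f$ is needed.
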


\begin{proof}
We prove the second statement (the first follows in the same way). Fix $\epsilon>0$. By Theorem \ref{main_asymp}, there exists a positive integer $n$ such that \[\frac{\alpha(k)}{k^2} < f(p^{-\langle\log_p k\rangle}) + \epsilon\] for all $k\geq n$, so that \[\sup_{k\geq n}\frac{\alpha(k)}{k^2}\leq \sup_{\frac{1}{p}\leq x\leq 1}f(x)+\epsilon.\] It follows that \[\sup_{\frac{1}{p}\leq x\leq 1}f(x) = \inf_{n\geq 1}\sup_{k\geq n}\frac{\alpha(k)}{k^2} = \limsup_{k\to\infty}\frac{\alpha(k)}{k^2}.\]
\end{proof}

In the next corollary, we return to the case where $p = 2$ and $T$ is not suspicious.

\begin{cor}
Suppose $T$ is not suspicious, $\frac{1}{2}\leq x\leq 1$, $s_k$ is as in Corollary \ref{subseq_asymp}, and $R_n(z)=\sum_{j=1}^{\deg R_n}v_jz^j$. Then $$\lim_{k\to\infty}\frac{a_T(s_k)}{s_k^2}=\frac{1}{2n}\Sum_{j=1}^{\deg R_n}v_j\left(\frac{2^{2 + 2\langle\log_2 j\rangle - 2\epsilon_j(x)}}{3}x^2-2^{2 + \langle\log_2 j\rangle - \epsilon_j(x)}x-\lfloor\log_2 j\rfloor-\epsilon_j(x)\right).$$
\end{cor}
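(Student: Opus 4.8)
The plan is to specialize Theorem~\ref{main_asymp} and Corollary~\ref{subseq_asymp} to the generating-function data attached to a non-suspicious $T$, as identified in Remark~\ref{rem1}, and then simplify. Recall that for such $T$ we take $p = 2$ and $\phi_T(z) = f_T(z)/z^{n+1}$, which satisfies the functional equation of Section~8 with $R(z) = R_n(z) = \frac{r_n(z)}{z^{n+1}}\bigl(P_T(z) + \frac{C_\cap z^{2N}}{1-z}\bigr)$, where $r_n(z) = (1-z^n)(1-z)^2$. The crucial point is to pin down the constant $C$ in the hypothesis of Theorem~\ref{main_asymp}: here $\frac{1}{\lambda(z)} = \frac{1}{r_n(z)} = \sum_{k\ge 0}\eta(k)z^k$, and the preceding remarks show $\eta(k) = \frac{k^2}{2n} + O(k)$, so $\gamma(k) = \eta(k)$ has leading coefficient $C = \frac{1}{2n}$, with the error term $f(k) = O(k)$ trivially satisfying $\lim_{k\to\infty} f(k)(\log_p k)/k^2 = 0$. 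Thus all hypotheses of Theorem~\ref{main_asymp} hold with these choices.

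Next I would write $R_n(z) = \sum_{j=1}^{\deg R_n} v_j z^j$ as in the statement, and feed $p = 2$, $c_j = v_j$, $C = \frac{1}{2n}$ into the explicit formula for $f(x)$ from Theorem~\ref{main_asymp}. Term by term this gives
\[
f(x) = \frac{1}{2n}\Sum_{j=1}^{\deg R_n} v_j\left(\frac{2^{2+2\langle\log_2 j\rangle - 2\epsilon_j(x)}}{2^2 - 1}x^2 + \frac{2\cdot 2^{1+\langle\log_2 j\rangle - \epsilon_j(x)}}{1-2}x - \lfloor\log_2 j\rfloor - \epsilon_j(x)\right),
\]
and since $2^2 - 1 = 3$, $1 - 2 = -1$, and $2\cdot 2^{1+\langle\log_2 j\rangle-\epsilon_j(x)} = 2^{2+\langle\log_2 j\rangle-\epsilon_j(x)}$, the middle term becomes $-2^{2+\langle\log_2 j\rangle-\epsilon_j(x)}x$, which is exactly the expression displayed in the corollary. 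Finally, by the Remark following Theorem~\ref{coeffrep}, $a_T(k + n + 1) = \alpha(k)$, so $\frac{a_T(s_k)}{s_k^2}$ and $\frac{\alpha(s_k)}{s_k^2}$ have the same limiting behavior (the shift by $n+1$ is negligible since $s_k \to \infty$, and one checks $\langle\log_2(s_k - n - 1)\rangle$ and $\langle\log_2 s_k\rangle$ have the same limit); applying Corollary~\ref{subseq_asymp} with this $f$ yields $\lim_{k\to\infty} a_T(s_k)/s_k^2 = f(x)$, which is the claimed identity.

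There is essentially no serious obstacle here, since the work is done by Theorems~\ref{main} and~\ref{main_asymp}; the only points requiring a small amount of care are verifying the value $C = \frac{1}{2n}$ (which follows from the expansion $\eta(k) = \frac{k^2}{2n} + O(k)$), checking that the error term $f(k) = O(k)$ genuinely meets the hypothesis $\lim f(k)(\log_p k)/k^2 = 0$, and confirming that the index shift $\alpha(k) = a_T(k+n+1)$ does not disturb the fractional-part condition defining $s_k$. The main ``obstacle,'' if any, is purely bookkeeping: making sure the arithmetic of substituting $p = 2$ into the general formula reproduces the stated signs and exponents exactly.
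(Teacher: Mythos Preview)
Your proposal is correct and follows precisely the route the paper intends: the corollary is stated without proof in the paper, as it is an immediate specialization of Theorem~\ref{main_asymp} and Corollary~\ref{subseq_asymp} via Remark~\ref{rem1} and the estimate $\eta(k)=\frac{k^2}{2n}+O(k)$, exactly as you outline. Your bookkeeping on $C=\tfrac{1}{2n}$, the substitution $p=2$, and the harmless index shift $\alpha(k)=a_T(k+n+1)$ is accurate and matches the paper's implicit reasoning.
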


\noindent\textbf{Example 1.}
Let $T=1+x+x^3$. Then if we denote the limit function above by $f_3(x)$, explicit calculation using a computer yields

$$f_3(x)=\begin{cases}-\mathlarger{\frac{15}{32}}x^2 + \mathlarger{\frac{7}{12}}x + \mathlarger{\frac{11}{6}} &\text{ if } \mathlarger{\frac{1}{2}}\leq x<\mathlarger{\frac{2}{3}}\\\\-\mathlarger{\frac{3}{32}}x^2 + \mathlarger{\frac{1}{12}}x + 2 &\text{ if } \mathlarger{\frac{2}{3}}\leq x< \mathlarger{\frac{4}{5}}\\\\\mathlarger{\frac{41}{96}}x^2 - \mathlarger{\frac{3}{4}}x + \mathlarger{\frac{7}{3}} &\text{ if } \mathlarger{\frac{4}{5}}\leq x<\mathlarger{\frac{8}{9}}\\\\ \mathlarger{\frac{83}{384}}x^2 - \mathlarger{\frac{3}{8}}x + \mathlarger{\frac{13}{6}} &\text{ if } \mathlarger{\frac{8}{9}}\leq x\leq 1.\end{cases}$$

\noindent We note that the maximum and minimum of $f_3$ are $\frac{272}{135}$ and $\frac{493}{246}$, respectively. Thus, $$\limsup_{k\to\infty}\frac{a_T(k)}{k^2}=\frac{272}{135}$$ and $$\liminf_{k\to\infty}\frac{a_T(k)}{k^2}=\frac{493}{246}.$$

See Figure 5 for a graph of $f_3$. See Figure 6 for an illustration of the convergence of $a_T(s_k)/s_k^2$ to $f_3$.
\begin{figure}
\begin{center}
\includegraphics[scale=0.7]{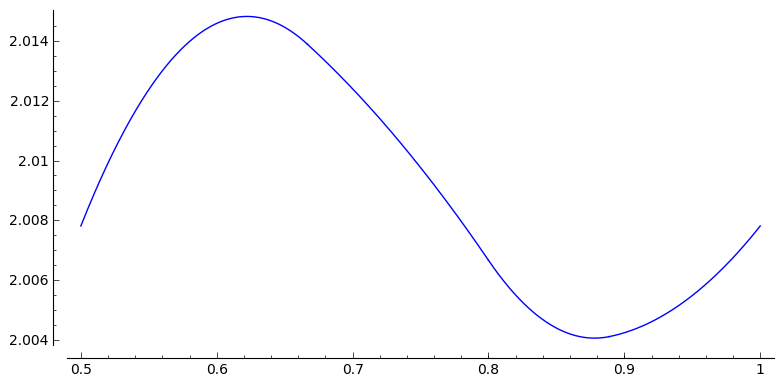}
\caption{Plot of the limit function for the case $n=3$}
\end{center}
\end{figure}
\begin{figure}
\begin{center}
\includegraphics[scale=0.7]{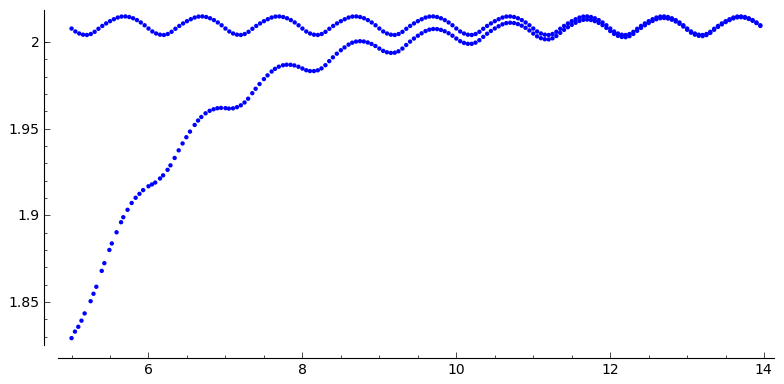}
\end{center}
\caption{We plot $f_3(2^{-\langle \log_2 y\rangle})$ (above) and $a_T(\lfloor y\rfloor)/{\lfloor y\rfloor}^2$ (below) versus $\log_2 y$ (horizontal), where for each function we sample various points $y\in[2^5,2^{14}]$.}
\end{figure}
\hspace{1mm}\\\hspace{1mm}\\\hspace{1mm}
\noindent\textbf{Example 2.}
Let $T=1+x+x^4$. Then, denoting the limit function by $f_4(x)$, we obtain the following representation (again using a computer)

$$f_4(x)=\begin{cases}-\mathlarger{\frac{235}{192}}x^2 + \mathlarger{\frac{11}{8}}x + \mathlarger{\frac{15}{8}} &\text{ if } \mathlarger{\frac{1}{2}}\leq x<\mathlarger{\frac{8}{15}}\\\\-\mathlarger{\frac{1205}{1536}}x^2 + \mathlarger{\frac{29}{32}}x + 2 &\text{ if } \mathlarger{\frac{8}{15}}\leq x<\mathlarger{\frac{4}{7}}\\\\-\mathlarger{\frac{617}{1536}}x^2 + \mathlarger{\frac{15}{32}}x + \mathlarger{\frac{17}{8}} &\text{ if } \mathlarger{\frac{4}{7}}\leq x<\mathlarger{\frac{8}{13}}\\\\-\mathlarger{\frac{55}{768}}x^2 + \mathlarger{\frac{1}{16}}x + \mathlarger{\frac{9}{4}} &\text{ if } \mathlarger{\frac{8}{13}}\leq x<\mathlarger{\frac{4}{5}}\\\\\mathlarger{\frac{245}{768}}x^2 - \mathlarger{\frac{9}{16}}x + \mathlarger{\frac{5}{2}} &\text{ if } \mathlarger{\frac{4}{5}}\leq x\leq1.\end{cases}$$
\noindent We note that the maximum and minimum of $f_4$ are given by $\frac{2791}{1234}$ and $\frac{2207}{980}$, respectively. Thus, $$\limsup_{k\to\infty}\frac{a_T(k)}{k^2}=\frac{2791}{1234}$$ and $$\liminf_{k\to\infty}\frac{a_T(k)}{k^2}=\frac{2207}{980}.$$

See Figure 7 for an illustration of the convergence of $a_T(s_k)/s_k^2$ to $f_4$.
% manual label override
\begin{figure}
\begin{center}
\includegraphics[scale=0.7]{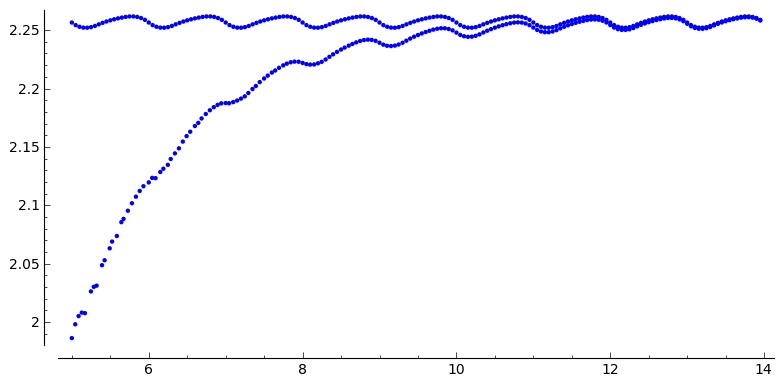}
\caption{$f_4(2^{-\langle \log_2 y\rangle})$ (above) and $a_T(\lfloor y\rfloor)/{\lfloor y\rfloor}^2$ (below) versus $\log_2 y$ (horizontal)}
\end{center}
\end{figure}

\section{Conclusion}

We have investigated recursion formulas for the line complexity sequence where the number of accessible blocks of length $2k$ is expressed in terms of the numbers of accessible blocks of several different smaller lengths. These recursions are intimately connected with the sets $A_i$, $B_i$ and the maps $T_{A_i}$, $T_{B_i}$ introduced above; in particular, we require these maps to be injective. The maps $T_{A_i}$ are always injective, but the same need not be true of the maps $T_{B_i}$. By closely analyzing the maps $T_{B_i}$, we have precisely characterized the polynomials that are not suspicious, i.e. those for which the maps $T_{B_i}$ are injective on the whole space. We have also proved that for many polynomial transition rules $T$, the intersections of at least two of the sets $A_i$ and $B_i$ are of constant size for sufficiently large $k$. This result leads to the recursions in Theorem \ref{main}, which hold for all positive-degree polynomials that are not suspicious.

We have also investigated the behavior of the line complexity sequence when the transition rule is raised to different powers; by introducing a notion of the \emph{order} of a recursion distinct from the order of the transition rule, we have seen that if an automaton modulo $p$ with a constant initial state and a transition rule $T$ satisfies a recursion of some order $r$, the automaton whose transition rule is $T^n$ (for any $n$) satisfies a recursion of order $rp^s$, where $s$ is the largest integer such that $p^s \mid n$.

In addition, we have proved functional relations for the generating functions associated to the sequence $a_T(k)$ in the mod $2$ case. In a more general setting, we proved that if $\phi(z)=\sum_{k=1}^\infty\alpha(k)z^k$ satisfies a certain functional equation relating $\phi(z)$ and $\phi(z^p)$, there is a continuous, piecewise quadratic function $f$ on $[1/p, 1]$ for which $\lim_{k\to\infty}\left\lbrack\frac{\alpha(k)}{k^2} - f(p^{-\langle\log_p k\rangle})\right\rbrack = 0$. Using this result, we have shown that for positive integer sequences $s_k\to\infty$ with a parameter $x\in [1/p,1]$ and for which $\lim_{k\to\infty}\langle\log_p s_k(x)\rangle=\log_p\frac{1}{x}$, the ratio $\alpha(s_k(x))/s_k(x)^2$ tends to $f(x)$. We have also shown that the limit superior and inferior of $\alpha(k)/k^2$ are given explicitly by the extremal values of $f$.

The requirement that a polynomial be nonsuspicious seems to be a very natural condition for recursions of the above type to exist; but in fact, numerical evidence suggests that some suspicious polynomials may satisfy such recursions. For example, direct computation of the line complexity sequence suggests that the rule $T=1+x+x^3+x^4$ satisfies a recursion of order $4$. We have also seen that powers of polynomials may satisfy lower-order recursions, as in the last section. 

It is of note that for recursions like those in Theorem \ref{main} to hold, we only require the maps $T_{B_i}$, $T_{B_i}'$ to be injective on the sets of accessible blocks on which they are defined, not necessarily on the whole space. The example of $T=1+x+x^3+x^4$ is interesting, in that the maps appear to be very nearly injective in this sense: explicit computation of the image ($k = 20, 30,$ etc.) suggests that the only blocks in the range for which injectivity fails are of the form $1010\cdots$ and $100100\cdots$, including translates. In particular, it appears that there are always four of them. We thus conjecture that this $T$ satisfies the recursions of Theorem \ref{main}, and that other polynomials might exhibit similar behavior.

On the other hand, some polynomials do not seem to satisfy a recursion of any order; one example is $T=1+x^2+x^3+x^5$. These observations, in connection with the observation that irreducible polynomials are not suspicious, suggest that recursive behavior of the line complexity sequence may be related to factorization properties of the polynomial; this constitutes perhaps the most immediate direction of further research.

\newpage

Other research directions include considering automata with coefficients taken modulo $p$, to see if the behaviors that arise in these situations are analogous to those we have observed in the present case.

\section{Acknowledgments}

I would like to thank Mr. Chiheon Kim for mentoring this project and for providing many helpful insights and suggestions. I would like to thank Prof. Pavel Etingof for suggesting this project, and Prof. Richard Stanley for suggesting the original topic. I would also like to thank the Center for Excellence in Education, the Massachusetts Institute of Technology, and the MIT Math Department for making RSI possible. I would like to thank Mr. Antoni Rangachev and Dr. Tanya Khovanova for their advice regarding this paper. I would also like to thank my sponsors, Mr. Steven Ferrucci of the American Mathematical Society, Dr. Donald McClure of the American Mathematical Society, Mr. and Mrs. Raymond C. Kubacki, Mr. Piotr Mitros, Mr. and Mrs. Steven Scott, Prof. Tom Leighton of Akamai Technologies, and Prof. Bonnie Berger of MIT. Finally, I would like to thank the MIT Math Department, the UROP+ program, and the Class of 1994 UROP fund for making possible the continuation of this project in the summer of 2015.

\newpage

\bibliographystyle{style}

\bibliography{bibliog}
\end{document}